\newtheorem{theorem}{Theorem}[section]
\newtheorem{lemma}[theorem]{Lemma}
\theoremstyle{definition}
\newtheorem{corollary}[theorem]{Corollary}
\newtheorem{conjecture}[theorem]{Conjecture}
\theoremstyle{remark}
\newtheorem{remark}{Remark}[section]
\newtheorem{teo}{Theorem}[section]
\theoremstyle{definition}
\newtheorem{dfn}[teo]{Definition}
\newtheorem{rk}[teo]{Remark}
\newtheorem{ex}[teo]{Example}
\newtheorem{que}[teo]{Question}
\numberwithin{equation}{section}
\newcommand{\wh}{\widehat}
\newcommand{\N}{\mathcal N}
\newcommand{\R}{\mathcal R}
\newcommand{\ga}{\Gamma}
\newcommand{\bea} {\begin{eqnarray*}}
\newcommand{\beq} {\begin{equation}}
\newcommand{\bey} {\begin{eqnarray}}
\newcommand{\eea} {\end{eqnarray*}}
\newcommand{\eeq} {\end{equation}}
\newcommand{\eey} {\end{eqnarray}}
\newcommand{\til}[1]{\widetilde{#1}}
\def\Z{{\mathbb Z}}
\def\Fix{\operatorname{Fix}}
\def\N{{\mathbb N}}
\def\<{\langle}
\def\>{\rangle}
\def\r{\rho}
\def\w{\omega}
\def\G{{\Gamma}}
\def\C{{\mathbb C}}
\def\R{{\mathbb R}}
\def\Z{{\mathbb Z}}
\def\M{{\mathcal M}}
\def\End{\mathop{\rm End}\nolimits}
\def\Ker{\mathop{\rm Ker}\nolimits}
\def\Im{\mathop{\rm Im}\nolimits}
\def\Aut{\operatorname{Aut}}
\def\Id{\operatorname{Id}}
\def\tr{\mathop{\rm tr}\nolimits}
\def\Tr{\operatorname{Tr}}
\def\ind{\operatorname{ind}}
\def\1{\mathbf 1}
\newcommand{\ov}[1]{\overline{#1}}
\newcommand{\be}{\begin{enumerate}}
\newcommand{\ee}{\end{enumerate}}
\newcommand{\bq}{\begin{question}}
\newcommand{\eq}{\end{question}}
\newcommand{\bcj}{\begin{conjecture}}
\newcommand{\ecj}{\end{conjecture}}
\newcommand{\bc}{\begin{corollary}}
\newcommand{\ec}{\end{corollary}}
\newcommand{\bl}{\begin{lemma}}
\newcommand{\el}{\end{lemma}}
\newcommand{\btl}{\begin{technicalLemma}}
\newcommand{\etl}{\end{technicalLemma}}
\newcommand{\bp}{\begin{proposition}}
\newcommand{\ep}{\end{proposition}}
\newcommand{\bft}{\begin{fact}}
\newcommand{\eft}{\end{fact}}
\newcommand{\brk}{\begin{remark}}
\newcommand{\erk}{\end{remark}}
\newcommand{\bd}{\begin{Dn}}
\newcommand{\ed}{\end{Dn}}
\def\r{\rho}
\def\w{\omega}
\def\G{{\Gamma}}
\def\C{{\mathbb C}}
\def\M{{\mathcal M}}
\def\End{\mathop{\rm End}\nolimits}
\def\Ker{\mathop{\rm Ker}\nolimits}
\def\Im{\mathop{\rm Im}\nolimits}
\def\Aut{\operatorname{Aut}}
\def\Id{\operatorname{Id}}
\def\tr{\mathop{\rm tr}\nolimits}
\numberwithin{equation}{section}
\newcommand{\Q}{\mathbf Q}
\newcommand{\fix}[1]{\mathop{\textrm{Fix}}(#1)}
\def \crit{\operatorname{crit}}
\def\<{\langle}
\def\>{\rangle}
\newcommand{\de}{\mathrm{d}}
\newcommand{\De}{\mathrm{D}}
\newcommand{\Cc}{\mathcal{C}}
\def\r{\rho}
\def\w{\omega}
\def\G{{\Gamma}}
\renewcommand{\to}{\rightarrow}
\newcommand{\To}{\longrightarrow}
\newcommand{\Mapsto}{\longmapsto}
\newcommand{\inclusion}{\hookrightarrow}
\newcommand{\p}{\partial}
\newcommand{\Of}{\Omega_{\phi}}
\newcommand{\ophi}{\omega_{\phi}}
\DeclareMathOperator{\id}{id}
\DeclareMathOperator{\im}{image}
\DeclareMathOperator{\area}{area}
\DeclareMathOperator{\symp}{Symp}
\DeclareMathOperator{\diff}{Diff}
\DeclareMathOperator{\sign}{sign}
\DeclareMathOperator{\flux}{Flux}
\DeclareMathOperator{\grow}{Growth}
\def\End{\mathop{\rm End}\nolimits}
\def\Ker{\mathop{\rm Ker}\nolimits}
\def\Im{\mathop{\rm Im}\nolimits}
\def\Aut{\operatorname{Aut}}
\def\Id{\operatorname{Id}}
\def\tr{\mathop{\rm tr}\nolimits}
\def\Tr{\operatorname{Tr}}
\def\ind{\operatorname{ind}}
\def \bd {\partial}
\newcommand{\HF}{HF_*}
\newcommand{\CF}{CF_*}
\def\Fix{\operatorname{Fix}}
\def\N{{\mathbb N}}
\begin{document}

\title[New directions in Nielsen-Reidemeister theory]
{New directions in Nielsen-Reidemeister theory}

\author{Alexander Fel'shtyn}
\address{ Institute of Mathematics, University of Szczecin,
ul. Wielkopolska 15, 70-451 Szczecin, Poland
and Department of Mathematics, Boise State University, 1910
University Drive, Boise, Idaho, 83725-155, USA }
\email{felshtyn@diamond.boisestate.edu, felshtyn@mpim-bonn.mpg.de}

\keywords{Nielsen number, Reidemeister number, twisted conjugacy classes,
twisted Burnside-Frobenius theorem, groups with \emph{property } $R_\infty$,
symplectic Floer homology,  Arnold conjecture }
\subjclass{22D10,20E45; 37C25; 43A20; 43A30; 46L; 53D; 37C30; 55M20}

\begin{abstract}
The purpose of this  expository paper is to present new directions
in the classical Nielsen-Reidemeister fixed point theory.
We describe twisted Burnside-Frobenius theorem, groups with  $R_\infty$
\emph{property } and a  connection between Nielsen fixed point theory and
symplectic Floer homology.

\end{abstract}

\maketitle

\tableofcontents

\section{Introduction}

\subsection{ Nielsen- Reidemeister fixed point theory}

 Let  $f: X \rightarrow X $ be a  map
  of a compact  topological space $X$.
 Nielsen-Reidemeister fixed point  theory  suggests a way of counting the fixed points of the map  $f$ in the presence of the fundamental group of the  space $X$. Let $p:\tilde{X}\rightarrow X$ be the
universal covering of $X$ and $\tilde{f}:\tilde{X}\rightarrow \tilde{X}$ a
lifting of $f$, ie. $p\circ\tilde{f}=f\circ p$.  Two liftings $\tilde{f}$ and
$\tilde{f}^\prime$ are called {\it conjugate} if there is a
$\gamma\in\Gamma\cong\pi_1(X)$ such that $\tilde{f}^\prime =
\gamma\circ\tilde{f}\circ\gamma^{-1}$. The subset $p(Fix(\tilde{f}))\subset
Fix(f)$ is called {\it the fixed point class of $f$ determined by the lifting
class $[\tilde{f}]$}. A fixed point class is called $essential$ if its index
is nonzero. The number of lifting classes of $f$ (and hence the number of
fixed point classes, empty or not) is called the {\it Reidemeister Number} of
$f$, denoted $R(f)$. This is a positive integer or infinity. The number of
essential fixed point classes is called the $Nielsen$ $number$ of $f$, denoted
by $N(f)$. The Nielsen number is always finite. $R(f)$ and $N(f)$ are homotopy
type invariants. In the category of compact, connected polyhedra the Nielsen
number of a map is equal to the least number of fixed points of maps with the
same homotopy type as $f$. In Nielsen fixed point   theory the main objects for
investigation are the Nielsen and Reidemeister numbers and their modifications
\cite {j}.

Our definition of a fixed point class is via the universal covering space. It essentially
 says: Two fixed point of $f$ are in the same class iff there is a lifting $\tilde f $ of $f$ having
 fixed points above both of them. There is another way of saying this, which does not use covering space explicitly, hence is  very useful in identifying fixed point classes.Namely,
 two fixed points $x_0$ and $x_1$ of $f$ belong to the same fixed point class iff
 there is a path $c$ from $x_0$ to $x_1$ such that $c \cong f\circ c $ ( homotopy relative endpoints).
  This  can be considered as an equivalent definition of a non-empty fixed point class.
 Every map $f$  has only finitely many non-empty fixed point classes, each a compact
 subset of $X$.
Given a homotopy $H=\{h_t\}: f_0 \cong f_1$, we want to see its influence on fixed point classes
of $f_0$ and $f_1$. A homotopy $\tilde H=\{\tilde h_t\}: \tilde X \to \tilde X$ is called a lifting of the homotopy $H=\{h_t\}$, if $\tilde h_t$ is a lifting of $h_t$ for every $t\in I$.
Given a homotopy $H$ and a lifting $\tilde f_0$ of $f_0$, there is a unique lifting  $\tilde H$
of $H$ such that $ \tilde h_0=\tilde f_0$, hence by unique lifting property of covering spaces
they determine a lifting $ \tilde f_1$ of $f_1$. Thus $H$ gives rise to a one-one correspondence
from liftings of $f_0$ to liftings  of $f_1$.
This correspondence preserves the conjugacy relation.Thus there is a one-to-one
correspondence between lifting classes and fixed point classes of $f_0$ and those of $f_1$.

 Given a selfmap $f:X\to X$ of a compact connected manifold $X$, the nonvanishing of the classical Lefschetz number $L(f)$ guarantees the existence of fixed points. Unfortunately, $L(f)$ yields no information about the size of the set of fixed points of $f$. However, the Nielsen number $N(f)$, a more subtle homotopy invariant, provides a lower bound on the size of this set. For $\dim X\ge 3$, a classical theorem of Wecken   asserts that $N(f)$ is a sharp lower bound on the size of this set, that is, $N(f)$ is the minimal number of fixed points among all maps homotopic to $f$. Thus the computation of $N(f)$ is a central issue in fixed point theory.

Let $G$ be a countable discrete group and $\phi: G\rightarrow G$ an
endomorphism.
Two elements $x,x'\in G$ are said to be
 $\phi$-{\em conjugate} or {\em twisted conjugate,}
iff there exists $g \in G$ with
$
x'=g  x   \phi(g^{-1}).
$
We shall write $\{x\}_\phi$ for the $\phi$-{\em conjugacy} or
{\em twisted conjugacy} class
 of the element $x\in G$.
The number of $\phi$-conjugacy classes is called the {\em Reidemeister number}
of an  endomorphism $\phi$ and is  denoted by $R(\phi)$.
If $\phi$ is the identity map then the $\phi$-conjugacy classes are the usual
conjugacy classes in the group $G$.

Let $f:X\rightarrow X$ be given, and let a
specific lifting $\tilde{f}:\tilde{X}\rightarrow\tilde{X}$ be chosen
as reference.
Let $\Gamma$ be the group of
covering translations of $\tilde{X}$ over $X$.
Then every lifting of $f$ can be written uniquely
as $\alpha\circ \tilde{f}$, with $\alpha\in\Gamma$.
So elements of $\Gamma$ serve as coordinates of
liftings with respect to the reference $\tilde{f}$.
Now for every $\alpha\in\Gamma$ the composition $\tilde{f}\circ\alpha$
is a lifting of $f$ so there is a unique $\alpha^\prime\in\Gamma$
such that $\alpha^\prime\circ\tilde{f}=\tilde{f}\circ\alpha$.
This correspondence $\alpha\rightarrow\alpha^\prime$ is determined by
the reference $\tilde{f}$, and is obviously a homomorphism.
The endomorphism $\tilde{f}_*:\Gamma\rightarrow\Gamma$ determined
by the lifting $\tilde{f}$ of $f$ is defined by
$
  \tilde{f}_*(\alpha)\circ\tilde{f} = \tilde{f}\circ\alpha.
$
It is well known that $\Gamma\cong\pi_1(X)$.
We shall identify $\pi=\pi_1(X,x_0)$ and $\Gamma$ in the usual  way.

We have seen that $\alpha \in \pi$ can be considered as the coordinate of the
lifting $\alpha \circ \tilde f$. We can  tell the conjugacy of two liftings from their coordinates:
$[\alpha \circ\tilde f]=[\alpha^\prime \circ\tilde f] $  iff there is $\gamma  \in \pi$ such that
$\alpha^\prime=\gamma \alpha \tilde{f}_* (\gamma^{-1})$.

So we have the  Reidemeister bijection:
 Lifting classes of $f$ are in 1-1 correspondence with $\tilde{f}_*$-conjugacy classes in group  $\pi$,
 the lifting class $[\alpha\circ\tilde f]$ corresponds to the $\tilde{f}_*$-cojugacy class of $\alpha$.

   By an abuse of language, we  say that the fixed point class $p( \fix{\alpha\circ\tilde f})$,
which is labeled with the lifting class $[\alpha\circ\tilde f]$,corresponds to the $\tilde{f}_*$-conjugacy class of $\alpha$. Thus the $\tilde{f}_*$-conjugacy classes in $\pi$ serve
as coordinates for the fixed point classes of $f$, once a reference lifting $\tilde f$ is chosen.

\subsection{New directions}

The interest in twisted conjugacy relation  for group endomorphism
 has its origins not only 
in  Nielsen-Reidemeister fixed point theory (see, e.g.
 \cite{reid:re,j, FelshB}), but also
in Selberg theory (see, eg. \cite{Shokra,Arthur})  and  Algebraic
 Geometry
(see, e.g. \cite{Groth}).

A current important  problem in this area is to obtain
a twisted analogue of the celebrated Burnside-Frobenius theorem
\cite{FelHill,FelshB,FelTro,FelTroVer,ncrmkwb,cras,polyc,FelTroObzo,IndDipl},
that is, to show the
equality of
the Reidemeister number of $\phi$ and the number of fixed points of the
induced homeomorphism of an appropriate dual object.

If $G$ is a finite group, then the classical Burnside{-Frobenius}
theorem (see, e.g., \cite{serrerepr},
\cite[p.~140]{Kirillov})
says that the number of
classes of irreducible representations is equal to the number of conjugacy
classes of elements of $G$.  Let $\wh G$ be the {\em unitary dual} of $G$,
i.e. the set of equivalence classes of unitary irreducible
representations of $G$.

If $\phi: G\to G$ is an automorphism, it induces a map $\wh\phi:\wh G\to\wh G$,
$\wh\phi (\r)=\r\circ\phi$.
Therefore, by the Burnside{-Frobenius} theorem, if $\phi$ is the identity automorphism
of any finite group $G$, then we have
 $R(\phi)=\#\Fix(\wh\phi)$.

In \cite{FelHill} it was discovered that
this statement remains true for any automorphism $\phi$ of any finite group $G$.
Indeed, if we
consider an automorphism $\phi$ of a finite group $G$, then $R(\phi)$
is equal to the dimension of the space of twisted invariant functions on
this group. Hence, by Peter-Weyl theorem (which asserts the existence of
a two-side equivariant isomorphism
$C^*(G)\cong \bigoplus_{\r\in\wh G} \End(H_\r)$),
$R(\phi)$ is identified
with the sum of dimensions
$d_\r$ of twisted invariant elements of $\End(H_\r)$, where $\r$ runs over
$\wh G$, and the space of representation $\r$ is denoted by $H_\r$.
By the Schur lemma,
$d_\r=1$, if $\r$ is a fixed point of $\wh\phi$, and is zero otherwise. Hence,
$R(\phi)$ coincides with the number of fixed points of $\wh\phi$.

The attempts to generalize this theorem to the case of non-identical
automorphism and of non-finite group
(i.e., to identify the Reidemeister number of $\phi$ and the number
of fixed points of $\wh\phi$ on an appropriate dual object of $G$,
provided that one of these numbers is finite)
were inspired by the dynamical
questions and were the subject  of a series of papers
\cite{FelHill,FelshB,FelTro,FelTroVer,ncrmkwb,FelIndTro,cras,polyc,FelTroObzo,IndDipl}
In the  paper \cite{fts} we studied  the following property for a countable
discrete group $G$ and its automorphism $\phi$: we say that
the group is $\phi$-\emph{conjugacy separable} if its Reidemeister
classes can be distinguished by homomorphisms onto finite groups, and
we say that it is \emph{twisted conjugacy separable} if it is
$\phi$-conjugacy separable for any automorphism $\phi$
with $R(\phi)<\infty$ (\emph{strongly twisted conjugacy separable}, if
we remove this finiteness restriction)
(Definitions \ref{dfn:ficonjsep} and \ref{dfn:twisconjsep}).
This notion was used in \cite{polyc} to prove the twisted Burnside-Frobenius
theorem for polycyclic-by-finite groups with the finite-dimensional part
of the unitary dual $\wh G$ as an appropriate dual object.

In chapter \ref{sec:tbft}, after some preliminary considerations,  we discuss  following 
results:
\begin{enumerate}
    \item  Classes of twisted conjugacy separable groups: Polycyclic-by-finite groups
    are strongly twisted conjugacy separable groups \cite{polyc,fts}.
    \item  Twisted conjugacy separability respects some extensions:
Suppose, there is an extension $H\to G\to G/H$, where the group $H$ is a characteristic
twisted conjugacy separable group; $G/H$ is finitely generated {\rm FC}-group (i.e.,
a group with finite conjugacy classes).
Then $G$ is a twisted conjugacy separable group \cite{polyc,fts}.
    \item  Examples of groups, which are not twisted conjugacy separable:
HNN, Ivanov and Osin groups \cite{polyc}.
\item  The affirmative answer to the twisted Dehn conjugacy problem for
polycyclic-by-finite groups \cite{fts}.
\item   Twisted Burnside-Frobenius theorem  for $\phi$-conjugacy separable groups
in the following formulation:
Let $G$ be an $\phi$-conjugacy separable group .Then $R(\phi)=S_f(\phi)$
if one of these numbers is finite, where $S_f(\phi)$ is the number of fixed points of $\wh\phi:\wh G_f\to \wh G_f$,
$\wh\phi (\r)=\r\circ\phi$, where $\wh G_f$ is the part of the unitary dual
$\wh G$, which is formed by the finite-dimensional representations
\cite{polyc, fts}.

\end{enumerate}

A number of examples
of groups and automorphisms with finite Reidemeister numbers was
obtained and studied in \cite{FelshB,gowon,FelHillWong,FelTroVer,FelIndTro}.
Using the same argument as in \cite{FelTro} one obtains from the
twisted Burnside-Frobenius theorem the following dynamical and number-theoretical
consequence which, together with the twisted Burnside-Frobenius
theorem itself,
is very important for the realization problem of Reidemeister numbers in
topological dynamics and the study of the Reidemeister
zeta-function.
Let $\mu(d)$, $d\in\N$, be the {\em M\"obius function},
i.e.$\mu(d)=0 $ if $d$ is divisible by a square different from one ;
$\mu(d)=(-1)^k $ if
 $d$ is not divisible  by a square different from one , where $k$ denotes the number of
 prime divisors of $d$; $ \mu(1)=1$.

{ Congruences for Reidemeister numbers\cite{polyc}:
{\it
Let $\phi:G$ $\to G$ be an automorphism of a countable discrete
twisted conjugacy separable group $G$
such that all numbers $R(\phi^n)$ are finite.
Then one has for all $n$,}
 $$
 \sum_{d\mid n} \mu(d)\cdot R(\phi^{n/d}) \equiv 0 \mod n.
 $$

These  congruences  were proved previously in a
number of special cases in \cite{FelHill, FelshB,FelTro,FelTroVer,FelIndTro}
and are an analog of remarkable Dold congruences for the  Lefschetz numbers 
of the iterations of a continuous  map\cite{dold}.

One step in the  process to obtain twisted Burnside-Frobenius theorem 
is to describe the class of groups $G$
 for which  $R(\phi)=\infty$ for
any automorphism $\phi:G\to G$.
 We say that  a group $G$ has $R_\infty$
\emph{property }  if all of its automorphisms $\phi$
satisfy  $R(\phi)=\infty$.

The work of discovering which groups  have  $R_\infty$
\emph{property } was begun by Fel'shtyn and Hill in \cite{FelHill}.
 It was later  shown by various authors that the following groups
 have  $R_\infty$ \emph{property }:
 (1) non-elementary Gromov hyperbolic groups \cite{FelPOMI,ll}, (2)
Baumslag-Solitar groups $BS(m,n) = \langle a,b | ba^mb^{-1} = a^n
 \rangle$
except for $BS(1,1)$ \cite{FelGon},
 (3) generalized Baumslag-Solitar groups, that is, finitely generated
 groups
which act on a tree with all edge and vertex stabilizers infinite
 cyclic
\cite{LevittBaums}, (4)
lamplighter groups $\mathbb Z_n \wr \mathbb Z$ if and only if  $2|n$ or
 $3|n$
 \cite{gowon1}, (5)
the solvable generalization $\ga$ of $BS(1,n)$ given by the short exact
 sequence
$1 \rightarrow \mathbb Z[\frac{1}{n}] \rightarrow \ga \rightarrow
 \mathbb Z^k \rightarrow 1,$
 as well as any group quasi-isometric to $\ga$ \cite{TabWong};
such groups  are quasi-isometric to $BS(1,n)$ \cite{TabWong2} ( note
 however that the class of groups
for which $R(\phi)=\infty$ for
any automorphism $\phi$ is not closed under quasi-isometry)  (6)
 saturated weakly branch groups
 (including the Grigorchuk group and the Gupta-Sidki group)
 \cite{FelLeoTro},
(7)The  R. Thompson group F \cite{bfg},
(8)  symplectic groups
  $Sp(2n,\mathbb Z)$,  the mapping class groups  $Mod_{S}$ of a compact  surface $S$ and  the full braid
 groups  $B_n(S)$ on $n$ strings of a compact  surface $S$ in the  cases where
 $S$ is either the compact  disk $D$, or the sphere $S^2$ \cite{dfg}.

The results of the present paper indicate  that the further study
of Reidemeister theory for these groups  should  go
along the lines similar to those  of  the infinite case. On the other
hand, this result reduces  the class of groups for which the
twisted Burnside-Frobenius conjecture
 \cite{FelHill,FelTro,FelTroVer,ncrmkwb,polyc,FelTroObzo,IndDipl}
has yet to be verified.

One is interested in conditions which yield more specific information about the relationship between the Lefshetz number, the Nielsen number and the Reidemeister number.  In the special case of selfmaps of a lens space, Franz  \cite{Franz} showed  that all fixed point classes of such maps have the same fixed point index.  From this it follows that one of two situations occurs, namely
1) $L(f)=0$ implies $N(f)=0$, in which case $f$ is deformable to a fixed point free map, and
2) $L(f)\ne 0$ implies $N(f)=\#Coker (1-f_{*_1})$ where $f_{*_1}$ is the induced homomorphism on the first integral homology.
When the Lefschetz number $L(f)$ is nonzero, the cardinality of $Coker (1-f_{*_1})$ is exactly equal to the Reidemeister number $R(f)$.

The evaluation subgroup $J(X)$, also known as the first Gottlieb group $G_1(X)$, of the fundamental group $\pi$ is the image of the homomorphism $ev_*:\pi_1(X^X,1_X) \to \pi_1(X,x_0)$ induced by the evaluation map at a point $x_0\in X$. If $J(X)$ coincides with $\pi$, the analogous results are obtained as well. Namely, $L(f)=0$ implies that $N(f) = 0$, and $L(f)\ne 0$ implies that  $N(f)=R(f)$ for all maps $f$. This general type of result was first proved by B. Jiang  (see \cite{j} for more information on Nielsen fixed point theory) for a class of spaces now known as Jiang spaces.
Jiang-type results hold for all selfmaps of a large class of spaces including simply-connected spaces, generalized lens spaces, $H$-spaces, topological groups, orientable coset spaces of compact connected Lie groups, nilmanifolds, certain $\mathcal C$-nilpotent spaces where $\mathcal C$ denotes the class of finite groups, certain solvmanifolds and infra-homogeneous spaces  \cite{Wo2}.  When $\dim X\ge 3$ and $N(f)=0$, the selfmap $f$ is deformable to be fixed point free by Wecken's theorem. For the equality $N(f)=R(f)$ to hold, one must first determine the finiteness of the Reidemeister number $R(f)$.

Unless $N(f)=0$ for all homeomorphisms $f$, property $R_\infty$ eliminates the possibility of a Jiang-type result, which would require the finiteness of the Reidemeister number.
  That is, if a group $G$ has property $R_\infty$, a compact connected manifold with $G$ as fundamental group will never satisfy a Jiang-type result.

In particular the fundamental group of surface of genus greater than 1 has
property $R_\infty$.  So,   we need a new ideas and tools in this classical case. Recently a  connection between symplectic Floer homology and Nielsen fixed
point theory was discovered \cite{ff, G}. The author came to the  idea that Nielsen numbers are  connected with  Floer homology of surface diffeomorphisms  at the Autumn 2000, after conversations with Joel Robbin and Dan Burghelea.

In the chapter \ref{sec:floer} we discuss  the connection between  symplectic Floer homology group and Nielsen fixed point theory.
We also describe symplectic zeta functions and an  asymptotic invariant of monotone symplectomorphism. Generalised Arnold conjecture is formulated.

A part of this article  grew out of the joint papers of the author with E.V. Troitsky  and  D. L. Gon\c{c}alves   written in 2001- 2007. Their collaboration is gratefully appreciated.
 
The author is  grateful to
 B. Bowditch,  F. Dahmani, Ya. Eliashberg,  M. Gromov,  V. Guirardel,   Wu-Chung Hsiang,  M. Kapovich, G. Levitt,   J.D. McCarthy,  K. Ono,  M. Sapir,  Y. Rudyak,   S. Sidki,  V. Turaev and  A. Vershik   for stimulating discussions and comments.

Parts of this article were written while the author was was visiting the
Max-Planck-Institute f\"ur Mathematik,
Bonn in 2001-2006. 
The author would like to thank the Max-Planck-Institute f\"ur Mathematik,
Bonn for kind hospitality and support.

\section{Twisted Burnside-Frobenius theorem}\label{sec:tbft}

\subsection{Preliminary considerations}\label{sec:prelim}

 Following construction relates $\phi$-conjugacy classes and
some conjugacy classes of another group.
Consider the action of $\Z$ on $G$, i.e. a homomorphism $\Z\to\Aut(G)$,
$n\mapsto\phi^n$. Let $\G$ be a corresponding semi-direct product
$\G=G\rtimes\Z$:
\begin{equation}\label{eq:defgamma}
    \G:=< G,t\: |\: tgt^{-1}=\phi(g) >
\end{equation}
in terms of generators and relations, where $t$ is a generator of $\Z$.
The group $G$ is a normal subgroup
of $\G$. As a set, $\G$ has the form
\begin{equation}\label{eq:razlgamm}
    \G=\sqcup_{n\in\Z}G\cdot t^n,
\end{equation}
where $G\cdot t^n$ is the coset by $G$ containing $t^n$.

\begin{rk}
Any usual conjugacy class of $\G$ is contained in some $G\cdot t^n$.
Indeed, $g g't^n g^{-1}=gg'\phi^n(g^{-1}) t^n$ and $t g't^n t^{-1}=\phi(g')t^n$.
\end{rk}

\begin{lemma}
\label{lem:HillRanbij}
Two elements $x,y$ of $G$ are $\phi$-conjugate iff $xt$ and $yt$ are conjugate
in the usual sense in $\G$. Therefore $g\mapsto g\cdot t$ is a bijection
from the set of $\phi$-conjugacy classes of $G$ onto the set of conjugacy
classes of $\G$ contained in $G\cdot t$.
\end{lemma}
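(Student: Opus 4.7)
The plan is to prove both implications by a direct computation in $\G$, using only the defining relation $tgt^{-1}=\phi(g)$ of (\ref{eq:defgamma}) (equivalently $t^n g = \phi^n(g)t^n$ for all $n\in\Z$ and $g\in G$), and then to derive the bijection statement from these implications together with the Remark above.

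For the forward implication, assume $y = gx\phi(g^{-1})$ for some $g\in G$. Then in $\G$ I would compute
\[
g(xt)g^{-1} = gx(tg^{-1}t^{-1})t = gx\phi(g^{-1})t = yt,
\]
which exhibits $xt$ and $yt$ as conjugate in $\G$.

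For the reverse implication, suppose $\gamma(xt)\gamma^{-1}=yt$ for some $\gamma\in\G$. By the decomposition (\ref{eq:razlgamm}) write $\gamma=gt^n$ uniquely with $g\in G$ and $n\in\Z$. Using $t^n x = \phi^n(x)t^n$ and $tg^{-1} = \phi(g^{-1})t$, a short calculation yields
\[
\gamma(xt)\gamma^{-1} = g\phi^n(x)\phi(g^{-1})\,t,
\]
so $y = g\phi^n(x)\phi(g^{-1})$. To conclude that $x$ and $y$ are $\phi$-conjugate I first observe that $x$ is $\phi$-conjugate to $\phi(x)$, by taking $h = x^{-1}$ in the defining relation: $hx\phi(h^{-1}) = x^{-1} x\phi(x) = \phi(x)$. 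Since $\phi$ itself preserves $\phi$-conjugacy classes (apply $\phi$ to any identity $y=hx\phi(h^{-1})$) and $\phi$-conjugacy is an equivalence relation, iteration gives $x\sim_\phi\phi^n(x)$ for every $n\in\Z$. The identity $y = g\phi^n(x)\phi(g^{-1})$ then shows $y\sim_\phi\phi^n(x)\sim_\phi x$, as required.

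Finally, the map $g\mapsto gt$ is surjective onto $G\cdot t$; by the Remark every $\G$-conjugacy class meeting $G\cdot t$ is contained in $G\cdot t$; and the two implications just established show that this map descends to a well-defined bijection from $\phi$-conjugacy classes of $G$ onto $\G$-conjugacy classes lying in $G\cdot t$. The only genuine (and quite minor) obstacle is bookkeeping the $t^n$ factor in the reverse direction, namely verifying that it only shifts $x$ to $\phi^n(x)$ within the same $\phi$-conjugacy class; everything else is a manipulation of the defining commutation relation.
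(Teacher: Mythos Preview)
Your proof is correct and follows essentially the same route as the paper's: both directions are handled by the same direct computations using the relation $tgt^{-1}=\phi(g)$, and the reverse implication is reduced to the observation that $x$ and $\phi(x)$ are always $\phi$-conjugate via $h=x^{-1}$. Your write-up is slightly more explicit (you spell out that $\phi$ preserves $\phi$-conjugacy classes and treat the bijection statement separately), but there is no substantive difference in the argument.
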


\begin{proof}
If $x$ and $y$ are $\phi$-conjugate then there is a $g\in G$ such that
$gx=y\phi(g)$. This implies $gx=ytgt^{-1}$ and therefore $g(xt)=(yt)g$
so $xt$ and $yt$ are conjugate in the usual sense in $\G$. Conversely,
suppose $xt$ and $yt$ are conjugate in $\G$. Then there is a $gt^n\in\G$
with $gt^nxt=ytgt^n$. From the relation $txt^{-1}=\phi(x)$ we obtain
$g\phi^n(x)t^{n+1}=y\phi(g)t^{n+1}$ and therefore $g\phi^n(x)=y\phi(g)$.
Hence, $y$ and $\phi^n(x)$ are $\phi$-conjugate. Thus,
$y$ and $x$ are $\phi$-conjugate, because $x$ and $\phi(x)$ are always
$\phi$-conjugate: $\phi(x)=x^{-1} x \phi(x)$.
\end{proof}

\subsection{Twisted conjugacy separability}\label{sec:separat}
We would like to give a generalization of the following well known notion.

\begin{dfn}
A group $G$ is \emph{conjugacy separable} if any pair $g$, $h$ of
   non-conjugate elements of $G$ are non-conjugate in some finite
   quotient of $G$.
\end{dfn}

It was proved that polycyclic-by-finite groups
are conjugacy separated (\cite{Remes,Form}, see also \cite[Ch.~4]{DSegalPoly}).

We can introduce the following notion, which coincides with the previous
definition in the case $\phi=\Id$.

\begin{dfn}\label{dfn:ficonjsep}\cite{fts}
A group $G$ is \emph{$\phi$-conjugacy separable} with respect to
an automorphism $\phi:G\to G$ if any pair $g$, $h$ of
   non-$\phi$-conjugate elements of $G$ are non-$\ov\phi$-conjugate in some finite
   quotient of $G$ respecting $\phi$.
\end{dfn}

This notion is closely related to the notion ${\rm RP}(\phi)$
introduced in \cite{polyc}.

\begin{dfn}\label{dfn:Rperiodi}\cite{polyc}
We say that a group $G$ has the property   {\rm  RP} if for
any automorphism $\phi$ with $R(\phi)<\infty$ the
characteristic functions $f$ of {\rm  Reidemeister} classes (hence
all $\phi$-central functions) are   {\rm  periodic} in the
following sense.

There exists a finite group $K$,
its automorphism $\phi_K$, and epimorphism $F:G\to K$ such that
\begin{enumerate}
    \item The diagram
    $$
    \xymatrix{
G\ar[r]^\phi\ar[d]_F& G\ar[d]^F\\
K\ar[r]^{\phi_K}& K
    }
    $$
    commutes.
    \item $f=F^*f_K$, where $f_K$ is a characteristic function
    of a subset of $K$.
\end{enumerate}

If this property holds for a concrete automorphism $\phi$, we
will denote this by RP($\phi$).
\end{dfn}

One gets immediately the following statement.

\begin{theorem}\label{teo:phiconjRP}\cite{polyc}
Suppose, $R(\phi)<\infty$. Then $G$ is $\phi$-conjugacy separable if and
only if $G$ is {\rm RP}$(\phi)$.
\end{theorem}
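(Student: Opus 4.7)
The plan is to prove the two implications separately, in each case unpacking the relevant definition to produce an explicit finite quotient. The hypothesis $R(\phi)<\infty$ enters explicitly in the $(\Rightarrow)$ direction (where it permits a finite diagonal construction) and is implicit in the very statement of RP$(\phi)$.

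For $(\Rightarrow)$, I would enumerate the finitely many Reidemeister classes $C_1,\dots,C_r$ and choose representatives $g_i\in C_i$. For each pair $i\neq j$, $\phi$-conjugacy separability (Definition \ref{dfn:ficonjsep}) furnishes a finite quotient $F_{ij}\colon G\to K_{ij}$ respecting $\phi$, with induced automorphism $\phi_{ij}$, in which $F_{ij}(g_i)$ and $F_{ij}(g_j)$ are not $\phi_{ij}$-conjugate. Next I form the diagonal $F\colon G\to\prod_{i<j}K_{ij}$, set $K:=F(G)$, and observe that the product automorphism $\prod\phi_{ij}$ restricts to an automorphism $\phi_K$ of $K$ making the square of Definition \ref{dfn:Rperiodi} commute, since $F(\phi(g))=\phi_K(F(g))$ by construction. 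Because any $\phi$-equivariant epimorphism carries each $\phi$-conjugacy class into a single $\phi_K$-conjugacy class of the target, the images $F(C_i)$ are contained in $\phi_K$-conjugacy classes $D_i\subset K$ that are pairwise distinct (apply the projection to the $(i,j)$-th coordinate). A short argument shows $F^{-1}(D_i)=C_i$, whence the characteristic function of $C_i$ equals $F^*$ applied to the characteristic function of $D_i$, verifying RP$(\phi)$.

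For $(\Leftarrow)$, fix non-$\phi$-conjugate elements $g,h\in G$ and let $f$ be the characteristic function of the Reidemeister class $C_g$ of $g$. Applying RP$(\phi)$ to $f$ yields a finite group $K$, an automorphism $\phi_K$, an epimorphism $F\colon G\to K$ and a function $f_K\colon K\to\{0,1\}$ with $f=F^*f_K$. The key observation I would verify is that $f_K$ is constant on $\phi_K$-conjugacy classes of $K$: if $k'=a\,k\,\phi_K(a^{-1})$, lift $a$ to $b\in G$ and pick any preimage $x$ of $k$; then $bx\phi(b^{-1})$ is $\phi$-conjugate to $x$ with image $k'$, so $f_K(k)=f(x)=f(bx\phi(b^{-1}))=f_K(k')$. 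Evaluating at $g$ and $h$ gives $f_K(F(g))=1$ and $f_K(F(h))=0$, so $F(g)$ and $F(h)$ lie in distinct $\phi_K$-conjugacy classes of $K$, proving $\phi$-conjugacy separability.

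The only delicate points are bookkeeping: checking that the product automorphism genuinely preserves the finite image $K$ (automatic from the commutative squares for each $F_{ij}$) and that preimages of $\phi_K$-conjugacy classes in $K$ are unions of $\phi$-conjugacy classes in $G$. Both reduce to quick diagram chases, so I do not anticipate a substantive obstacle; the real content is already packaged in Definitions \ref{dfn:ficonjsep} and \ref{dfn:Rperiodi}.
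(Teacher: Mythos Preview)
Your proposal is correct and follows essentially the same route as the paper: for $(\Rightarrow)$ the paper also takes the diagonal map $F\colon G\to\bigoplus_{i,j}K_{ij}$ into the product of the separating finite quotients and lets $K$ be its image, while for $(\Leftarrow)$ the paper simply declares the implication ``evident''. Your write-up merely supplies the bookkeeping (the verification that $f_K$ is constant on $\phi_K$-classes and that $F^{-1}(D_i)=C_i$) that the paper leaves to the reader.
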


\begin{proof}
Indeed, let $F_{ij}:G\to K_{ij}$ distinguish $i$th and $j$th $\phi$-conjugacy
classes, where $K_{ij}$ are finite groups, $i,j=1,\dots,R(\phi)$. Let
$F:G\to \oplus_{i,j}K_{ij}$, $F(g)=\sum_{i,j}F_{ij}(g)$, be the diagonal
mapping and $K$ its image. Then $F:G\to K$ gives RP$(\phi)$.

The opposite implication is evident.
\end{proof}

\begin{dfn}\label{dfn:twisconjsep}\cite{fts}
A group $G$ is \emph{twisted conjugacy separable} if it is $\phi$-conjugacy
separable for any $\phi$ with $R(\phi)<\infty$.

A group $G$ is \emph{strongly twisted conjugacy separable} if it is $\phi$-conjugacy
separable for any $\phi$.
\end{dfn}

From Theorem \ref{teo:phiconjRP} one immediately obtains

\begin{corollary}\label{cor:twistconjRP}\cite{fts}
A group $G$ is twisted conjugacy separable if and only if it is {\rm RP}.
\end{corollary}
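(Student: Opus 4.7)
The statement is an immediate packaging of Theorem \ref{teo:phiconjRP} with the two definitions in hand, so the plan is essentially bookkeeping rather than genuine work. I would first spell out both sides of the equivalence as quantified statements over the same family of automorphisms. By Definition \ref{dfn:twisconjsep}, ``$G$ is twisted conjugacy separable'' unfolds to: for every automorphism $\phi:G\to G$ with $R(\phi)<\infty$, $G$ is $\phi$-conjugacy separable in the sense of Definition \ref{dfn:ficonjsep}. By Definition \ref{dfn:Rperiodi}, ``$G$ is RP'' unfolds to: for every automorphism $\phi:G\to G$ with $R(\phi)<\infty$, $G$ has property RP$(\phi)$.

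Next, I would invoke Theorem \ref{teo:phiconjRP} pointwise. That theorem asserts that, under the hypothesis $R(\phi)<\infty$, the two conditions ``$G$ is $\phi$-conjugacy separable'' and ``$G$ is RP$(\phi)$'' are equivalent. Applying this equivalence separately to each $\phi$ in the common indexing family $\{\phi\in\Aut(G): R(\phi)<\infty\}$, the two universally quantified statements above coincide term by term. Hence ``twisted conjugacy separable'' holds if and only if ``RP'' holds, which is precisely the corollary.

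In short, no new argument is required: the content lies entirely in Theorem \ref{teo:phiconjRP}, and the corollary is the universal closure of that equivalence over the family of automorphisms with finite Reidemeister number. The only mild subtlety worth flagging is that both definitions restrict quantification to the same set of $\phi$ (those with $R(\phi)<\infty$), so there is no mismatch of hypotheses between the two sides; hence there is really no obstacle, and the proof may be written in one or two lines. A single sentence such as ``Apply Theorem \ref{teo:phiconjRP} to each automorphism $\phi$ with $R(\phi)<\infty$ and take the conjunction over all such $\phi$'' suffices.
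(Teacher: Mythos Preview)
Your proposal is correct and matches the paper's approach exactly: the paper simply states that the corollary follows immediately from Theorem~\ref{teo:phiconjRP}, and you have spelled out precisely what ``immediately'' means here---namely, taking the universal closure of that equivalence over all automorphisms $\phi$ with $R(\phi)<\infty$, which is the common range of quantification in both Definition~\ref{dfn:twisconjsep} and Definition~\ref{dfn:Rperiodi}.
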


\begin{theorem}\label{teo:razdelgamg}\cite{polyc}
Let $F:\G\to K$ be a morphism onto a finite group $K$ which separates
two conjugacy classes of $\G$ in $G\cdot t$. Then the restriction
$F_G:=F|_G:G\to \Im(F|_G)$
separates the corresponding $($by the bijection from
Lemma {\rm \ref{lem:HillRanbij})} $\phi$-conjugacy classes in $G$.
\end{theorem}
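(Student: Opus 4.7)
The plan is to prove the contrapositive: show that if $F_G(x)$ and $F_G(y)$ are $\ov\phi$-conjugate in $\ov G := F(G)$ (with respect to the automorphism induced by $\phi$), then the conjugacy classes of $xt$ and $yt$ in $\G$ are not separated by $F$, i.e., $F(xt)$ and $F(yt)$ are conjugate in $K$.

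The first step will be to observe that $\phi$ descends to an automorphism $\ov\phi$ of $\ov G$ that makes the obvious square with $F_G$ commute. Indeed, since $F$ is a homomorphism on $\G$ and the defining relation \eqref{eq:defgamma} gives $t g t^{-1} = \phi(g)$, conjugation by $F(t)$ in $K$ sends $\ov G$ into itself and satisfies $F(t) F(g) F(t)^{-1} = F(\phi(g))$. Thus setting $\ov\phi(F(g)) := F(\phi(g))$ produces a well-defined automorphism of $\ov G$ (well-definedness follows from the conjugation formula, and surjectivity from the same formula applied to $\phi^{-1}$), and this is precisely the quotient notion of $\ov\phi$-conjugacy one wishes to distinguish.

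Next, I would carry out the direct calculation. Assume $F(x)$ and $F(y)$ are $\ov\phi$-conjugate: there exists $g \in G$ with
$$
F(x) \;=\; F(g)\, F(y)\, \ov\phi(F(g)^{-1}) \;=\; F(g)\, F(y)\, F(\phi(g^{-1})).
$$
Multiplying on the right by $F(t)$ and using $F(\phi(g^{-1}))\,F(t) = F(t)\,F(g^{-1})$, one obtains
$$
F(xt) \;=\; F(g)\, F(y)\, F(t)\, F(g)^{-1} \;=\; F(g)\, F(yt)\, F(g)^{-1},
$$
so $F(xt)$ and $F(yt)$ are conjugate in $K$, contradicting the hypothesis. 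Under the bijection of Lemma~\ref{lem:HillRanbij}, $xt$ and $yt$ correspond to the two $\phi$-conjugacy classes in $G$ containing $x$ and $y$, so this gives exactly the desired separation statement for $F_G$.

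I do not expect any real obstacle here; the only subtlety is that $\ov\phi$ must be verified to be a genuine automorphism of $\ov G$ (as opposed to $F(\G)$), and this is immediate from the semidirect product relation together with the fact that $G$ is a normal subgroup of $\G$. Once that is in place, the whole argument is a one-line manipulation exploiting the identity $\phi(g) = t g t^{-1}$ inside $\G$ carried across by $F$.
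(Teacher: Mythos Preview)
Your proof is correct and follows essentially the same approach as the paper. Both arguments verify that $\phi$ descends to the quotient (the paper phrases this as ``$\Ker(F_G)$ is $\phi$-invariant'', you phrase it as ``$\ov\phi$ is well-defined on $\ov G$'') and then use the identity $F(\phi(g))=F(t)F(g)F(t)^{-1}$ to convert between $\phi$-conjugacy in $G$ and ordinary conjugacy in the coset $G\cdot t$; the only cosmetic difference is that the paper argues the direct implication with universal quantifiers while you argue the contrapositive.
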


\begin{proof}
First of all let us remark that $\Ker(F_G)$ is $\phi$-invariant.
Indeed, suppose $F_G(g)=F(g)=e$. Then
$$
F_G(\phi(g))=F(\phi(g))=F(tgt^{-1})=F(t)F(t)^{-1}=e
$$
(the kernel of $F$ is a normal subgroup).

Let $gt$ and $\til gt$ be some representatives of the mentioned
conjugacy classes. Then
$$
F((ht^n)gt (ht^n)^{-1})\ne F(\til gt),\qquad\forall h\in G,\: n\in \Z,
$$
$$
F(ht^ngt )\ne F(\til gt ht^n),\qquad\forall h\in G,\: n\in \Z,
$$
$$
F(h\phi^n(g)t^{n+1} )\ne F(\til g \phi(h)t^{n+1}),\qquad\forall h\in G,\: n\in \Z,
$$
$$
F(h\phi^n(g))\ne F(\til g \phi(h)),\qquad\forall h\in G,\: n\in \Z,
$$
in particular, $F(hg\phi(h^{-1}))\ne F(\til g )$ $\forall h\in G$.
\end{proof}

\begin{theorem}\label{teo:conjsepandRP}\cite{fts}
Let some class of conjugacy separable groups be closed under
taking semidirect products by $\Z$. Then this class consists of
strongly twisted conjugacy separable groups.
\end{theorem}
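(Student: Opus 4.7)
The plan is to reduce strong twisted conjugacy separability of $G$ to ordinary conjugacy separability of the semidirect product $\Gamma=G\rtimes_\phi\Z$ of \eqref{eq:defgamma}, exploiting the two tools already developed in this section: the Hill--Ran bijection of Lemma \ref{lem:HillRanbij}, which turns $\phi$-conjugacy in $G$ into ordinary conjugacy in $\Gamma$ along the coset $G\cdot t$, and Theorem \ref{teo:razdelgamg}, which pushes a separating finite quotient of $\Gamma$ down to one of $G$.

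Fix $G$ in the given class $\mathcal{C}$ and an \emph{arbitrary} automorphism $\phi\colon G\to G$ (no finiteness assumption on $R(\phi)$, since we want the \emph{strong} property). Let $x,y\in G$ be two elements that are not $\phi$-conjugate; we must produce a finite quotient of $G$ respecting $\phi$ in which their images are not $\overline{\phi}$-conjugate. Form $\Gamma:=G\rtimes_\phi\Z$. By the closure hypothesis on $\mathcal{C}$, $\Gamma\in\mathcal{C}$, and hence $\Gamma$ is conjugacy separable. By Lemma \ref{lem:HillRanbij}, the elements $xt$ and $yt$ of $\Gamma$ are not conjugate in $\Gamma$, so conjugacy separability yields a morphism $F\colon\Gamma\to K$ onto a finite group $K$ with $F(xt)$ and $F(yt)$ non-conjugate in $K$.

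Now apply Theorem \ref{teo:razdelgamg} to this $F$: its restriction $F_G:=F|_G\colon G\to \Im(F_G)$ takes values in a finite group and separates the $\phi$-conjugacy classes of $x$ and $y$. As observed in the proof of that theorem, $\ker(F_G)$ is $\phi$-invariant, so $\phi$ descends to an automorphism $\overline{\phi}$ of $\Im(F_G)$, and in this finite quotient the images of $x$ and $y$ are not $\overline{\phi}$-conjugate. Since $x,y$ and $\phi$ were arbitrary, $G$ is $\phi$-conjugacy separable for every automorphism $\phi$, i.e.\ strongly twisted conjugacy separable (Definition \ref{dfn:twisconjsep}).

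The argument is essentially an assembly of the two previous results, so there is no serious obstacle; the only point that requires attention is checking that the quotient of $G$ obtained by restriction genuinely \emph{respects} $\phi$ in the sense of Definition \ref{dfn:ficonjsep}, and this is precisely what the $\phi$-invariance of $\ker(F_G)$ in the proof of Theorem \ref{teo:razdelgamg} guarantees. It is worth emphasizing that no bound on $R(\phi)$ is needed anywhere in the proof, which is why the conclusion is strong twisted conjugacy separability rather than merely twisted conjugacy separability.
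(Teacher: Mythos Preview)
Your proof is correct and follows exactly the route the paper intends: reduce $\phi$-conjugacy in $G$ to ordinary conjugacy in $\Gamma=G\rtimes_\phi\Z$ via Lemma~\ref{lem:HillRanbij}, use conjugacy separability of $\Gamma$ (guaranteed by the closure hypothesis), and then push the separating finite quotient back to $G$ via Theorem~\ref{teo:razdelgamg}. The paper's own proof is the one-liner ``This follows immediately from Theorem~\ref{teo:razdelgamg} and Theorem~\ref{teo:phiconjRP}''; your version simply unpacks this, and in fact the reference to Theorem~\ref{teo:phiconjRP} in the paper appears to be unnecessary (that theorem concerns the equivalence with property RP under $R(\phi)<\infty$, which plays no role here), so your omission of it is appropriate.
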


\begin{proof}
This follows immediately from Theorem \ref{teo:razdelgamg} and
Theorem \ref{teo:phiconjRP}.
\end{proof}

\subsubsection{First examples: polycyclic-by-finite groups}\label{sec:almostpolsep}

As an application we obtain another proof of the main theorem for
polycyclic-by-finite groups.

Let $G'=[G,G]$ be the \emph{commutator subgroup} or
\emph{derived group} of $G$, i.e. the subgroup generated by
commutators. $G'$ is invariant under any homomorphism, in
particular it is normal. It is the smallest normal subgroup of $G$
with an abelian factor group. Denoting $G^{(0)}:=G$, $G^{(1)}:=G'$,
$G^{(n)}:=(G^{(n-1)})'$, $n\ge 2$, one obtains \emph{derived series}
of $G$:
\begin{equation}\label{eq:derivedseries}
    G=G^{(0)}\supset G'\supset G^{(2)}\supset \dots\supset G^{(n)}\supset
\dots
\end{equation}
If $G^{(n)}=e$ for some $n$, i.e. the series (\ref{eq:derivedseries})
    stabilizes by trivial group,
    the group $G$ is \emph{solvable};

\begin{dfn}\label{dfn:polycgroup}
A solvable group with derived series with cyclic factors is called
\emph{polycyclic group}.
\end{dfn}

\begin{theorem}\label{teo:almostpolaresep}\cite{polyc}
Any polycyclic-by-finite group is a strongly twisted conjugacy separable group.
\end{theorem}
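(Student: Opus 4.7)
The plan is to deduce the theorem directly from Theorem~\ref{teo:conjsepandRP}. That theorem reduces the statement to verifying two facts about the class of polycyclic-by-finite groups: (i) every polycyclic-by-finite group is conjugacy separable, and (ii) the class of polycyclic-by-finite groups is closed under the formation of semidirect products with $\Z$. Property (i) is the classical Remeslennikov--Formanek theorem cited in the paragraph preceding Definition~\ref{dfn:ficonjsep}, so the only genuine work is in establishing (ii).

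For (ii), given a polycyclic-by-finite group $G$ and an automorphism $\phi\colon G\to G$, I want to show that the semidirect product $\G=G\rtimes_{\phi}\Z$ defined by \eqref{eq:defgamma} is again polycyclic-by-finite. When $G$ is itself polycyclic, $\G$ fits into a short exact sequence $1\to G\to \G\to \Z\to 1$, and the class of polycyclic groups is closed under extensions by cyclic groups, so $\G$ is polycyclic. In the general case, let $H\le G$ be a polycyclic subgroup of finite index $n$. The essential observation is that a finitely generated group has only finitely many subgroups of a given finite index, so the $\phi$-orbit $\{\phi^{k}(H)\}_{k\in\Z}$ takes only finitely many values. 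Consequently
\[
H^{*}:=\bigcap_{k\in\Z}\phi^{k}(H)
\]
is actually a finite intersection, hence a subgroup of finite index in $G$ which is moreover $\phi$-invariant and polycyclic (as a subgroup of $H$). Therefore $H^{*}\rtimes_{\phi}\Z$ is a well-defined subgroup of $\G$ which is polycyclic by the previous case, and its index in $\G$ equals $[G:H^{*}]<\infty$. Hence $\G$ is polycyclic-by-finite, as required.

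Combining (i), (ii), and Theorem~\ref{teo:conjsepandRP} gives the result. The only genuinely delicate point in this plan is that $G$ need not a priori contain any $\phi$-invariant polycyclic subgroup of finite index, so one cannot simply form $H\rtimes_{\phi}\Z$; this obstacle is overcome by the intersection trick above, which exploits the finiteness of the set of index-$n$ subgroups. Everything else reduces to standard permanence properties of polycyclic groups together with the Remeslennikov--Formanek theorem.
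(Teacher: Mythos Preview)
Your proof is correct and follows the same overall strategy as the paper: reduce to Theorem~\ref{teo:conjsepandRP} by checking that polycyclic-by-finite groups are conjugacy separable (Remeslennikov--Formanek) and that the class is closed under semidirect products with~$\Z$.

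The only difference lies in how you justify closure under~$\rtimes\Z$. The paper dispatches this in one line by invoking the standard structural fact that any polycyclic-by-finite group contains a \emph{characteristic} polycyclic subgroup~$P$ of finite index; since~$P$ is characteristic it is automatically $\phi$-invariant, so $P\rtimes\Z$ is immediately a polycyclic subgroup of finite index in~$\G$. You instead start from an arbitrary polycyclic subgroup~$H$ of finite index and manufacture a $\phi$-invariant one by intersecting the (necessarily finite) $\phi$-orbit of~$H$. Your route is more self-contained and makes explicit why the ``delicate point'' you flag is not actually an obstacle, while the paper's route is shorter but leans on a cited fact. Both arrive at the same conclusion by essentially the same mechanism.
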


\begin{proof}
The class of polycyclic-by-finite groups is closed under
taking semidirect products by $\Z$. Indeed, let $G$ be an
 polycyclic-by-finite group. Then there exists a characteristic
 (polycyclic) subgroup $P$ of finite index in $G$. Hence,
 $P\rtimes \Z$ is a polycyclic normal group of $G\rtimes \Z$
 of the same finite index.

Polycyclic-by-finite groups are
conjugacy separable (\cite{Remes,Form}, see also \cite[Ch.~4]{DSegalPoly}).
It remains to apply Theorem \ref{teo:conjsepandRP}.
\end{proof}

\subsection{Twisted conjugacy separability and extensions}
It is known that conjugacy separability does not respect extensions.
For twisted conjugacy separable groups the situation is much better under
some finiteness conditions. More precisely one has the following statement.

\begin{theorem}\label{teo:phiconjandexten}\cite{polyc}
Suppose, there exists a commutative diagram
\begin{equation}\label{eq:extens}
 \xymatrix{
0\ar[r]&
H \ar[r]^i \ar[d]_{\phi'}&  G\ar[r]^p \ar[d]^{\phi} & G/H \ar[d]^{\ov{\phi}}
\ar[r]&0\\
0\ar[r]&H\ar[r]^i & G\ar[r]^p &G/H\ar[r]& 0,}
\end{equation}
where $H$ is a normal subgroup of a finitely generated group $G$.
Suppose, $R(\phi)<\infty$, $G/H$ is a {\rm FC} group, i.e., all conjugacy
classes are finite, and $H$ is a $\phi'$-conjugacy separable group. Then
$G$ is a $\phi$-conjugacy separable group.
\end{theorem}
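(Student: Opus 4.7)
The plan is to separate, in a finite $\phi$-equivariant quotient of $G$, an arbitrary pair $g_1,g_2\in G$ that are not $\phi$-conjugate; by Definition~\ref{dfn:ficonjsep} this gives $\phi$-conjugacy separability of $G$. I would split the argument according to whether the images $\bar g_1:=p(g_1)$ and $\bar g_2:=p(g_2)$ are $\ov\phi$-conjugate in $G/H$.

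\emph{Case 1: $\bar g_1\not\sim_{\ov\phi}\bar g_2$ in $G/H$.} By a classical theorem of B.~H.~Neumann, any finitely generated FC-group has finite commutator subgroup, so $G/H$ is virtually abelian. The $\ov\phi$-conjugacy classes in a finitely generated abelian group are cosets of $\Im(\Id-\ov\phi_{\mathrm{ab}})$ and are readily separated in finite quotients; lifting through the finite (characteristic) commutator subgroup of $G/H$ produces a finite $\ov\phi$-equivariant quotient $G/H\to K_0$ separating $\bar g_1$ from $\bar g_2$. Pulling $K_0$ back along $p$ handles this case.

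\emph{Case 2: $\bar g_1\sim_{\ov\phi}\bar g_2$.} After replacing $g_1$ by a $\phi$-conjugate I may assume $\bar g_1=\bar g_2$, so $g_1=hg_2$ with $h\in H$. The finiteness hypothesis $R(\phi)<\infty$ now enters crucially: the $\phi$-conjugacy class of $g_2$ is one of only finitely many, and the part of it lying in the coset $Hg_2$ decomposes as a finite union of ``twisted $\phi'$-conjugacy classes'' of $H$, one for each relevant coset $\bar u_i\in G/H$, via automorphisms $\psi_i(y):=(u_ig_2)\phi(y)(u_ig_2)^{-1}g_2^{-1}$ of the normal subgroup $H$. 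The condition $g_1\not\sim_\phi g_2$ is therefore equivalent to: $h$ is not $\psi_i$-conjugate in $H$ to an explicit element $a_i\in H$ for any $i=1,\dots,s$. Since each $\psi_i$ agrees with $\phi':=\phi|_H$ up to conjugation by an element of $G$, the $\phi'$-conjugacy separability of $H$ separates $h$ from every $a_i$ in a single finite $\phi'$-equivariant quotient $H\to K_H$; averaging over the finite $G$-orbits allowed by the FC structure of $G/H$ and intersecting with $\phi$-iterates (a finite process, by finiteness of subgroups of bounded index in the finitely generated $H$) upgrades the associated kernel to a finite-index subgroup of $H$ that is both $G$-normal and $\phi$-invariant. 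Combining the resulting finite quotient of $G$ with the Case~1 quotient via the extension $G\to K_0$ yields the desired separation of $g_1$ and $g_2$.

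\emph{Main obstacle.} The subtle point is that the $\ov\phi$-twisted stabilizer of $\bar g_2$ in $G/H$ can have infinite index (as is already visible in the abelian case), so the finite family $\{\psi_i\}$ cannot be produced by a naive coset enumeration. One must instead use $R(\phi)<\infty$ to package infinitely many admissible coset representatives into finitely many $\psi_i$-classes, and then reconcile $\psi_i$-conjugacy separability (for a family of twisted automorphisms) with the hypothesis of $\phi'$-conjugacy separability (for a single automorphism). The latter reconciliation rests on the fact that the $\psi_i$ differ from $\phi'$ only by conjugation by elements of $G$, together with the FC structure of $G/H$, which makes the $G$-action on $H$ essentially finitary modulo an $H$-level finite-index subgroup. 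This bookkeeping, and in particular the upgrade from an $H$-level to a $G$-level finite quotient, is the technical heart of the proof.
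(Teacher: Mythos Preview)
The paper does not prove this theorem; it merely states it with a citation to \cite{polyc}, so there is no in-paper proof to compare your proposal against.

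On your proposal itself: the two-case split according to whether $\bar g_1\sim_{\ov\phi}\bar g_2$ is the natural structure, and Case~1 is essentially fine (a finitely generated FC-group is abelian-by-finite by Neumann's theorem, hence polycyclic-by-finite, hence strongly twisted conjugacy separable by Theorem~\ref{teo:almostpolaresep}). The genuine difficulty is exactly where you locate it, in Case~2, but your resolution remains a sketch rather than an argument. Two specific gaps: first, the hypothesis gives only $\phi'$-conjugacy separability of $H$, whereas you need separability for the twisted automorphisms $\psi_i$, which are $\phi'$ composed with conjugation in $G$ by elements that need not lie in $H$; your assertion that these ``differ from $\phi'$ only by conjugation'' does not by itself transfer the separability hypothesis, since conjugation by $g\notin H$ is not inner on $H$. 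Second, the step ``averaging over the finite $G$-orbits allowed by the FC structure of $G/H$'' to upgrade an $H$-level finite-index kernel to a $\phi$-invariant $G$-normal one is asserted but not carried out, and this is precisely where $R(\phi)<\infty$, the FC hypothesis, and finite generation of $G$ must all be made to interact. As you yourself note, this bookkeeping is the technical heart; the proposal identifies it correctly but does not yet supply it.
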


Another variant of finiteness is $|G/H|<\infty$ (without the property $R(\phi)<\infty$).

\subsection{ Twisted Burnside-Frobenius theorem for $\phi$-conjugacy 
separable groups }\label{sec:tbftsep}

\begin{dfn}
Denote by $\wh G_f$ the subset of the unitary dual $\wh G$ related to
finite-di\-men\-si\-on\-al representations.
\end{dfn}

\begin{theorem}\label{teo:twburnsforRP}\cite{polyc}
Let $G$ be an $\phi$-conjugacy separable group .Then $R(\phi)=S_f(\phi)$
if one of these numbers is finite.
\end{theorem}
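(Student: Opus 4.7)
The plan is to reduce the theorem to the classical Burnside--Frobenius theorem for finite groups via the \emph{RP} characterization of $\phi$-conjugacy separability given by Corollary \ref{cor:twistconjRP}, and then transport fixed points of $\wh{\phi_K}$ on $\wh K$ upstairs to fixed points of $\wh\phi$ on $\wh G_f$ via pullback. I treat the case $R(\phi)<\infty$ in detail and sketch the other case at the end.

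Assume first that $R(\phi)<\infty$. Apply property {\rm RP}$(\phi)$ to get a finite group $K$, an automorphism $\phi_K$, and an epimorphism $F\colon G\to K$ with $F\circ\phi=\phi_K\circ F$, such that every characteristic function of a Reidemeister class of $\phi$ is a pullback $F^{*}f_K$ of some function $f_K$ on $K$. The commuting square and surjectivity of $F$ force the induced map from $\phi$-conjugacy classes of $G$ to $\phi_K$-conjugacy classes of $K$ to be a surjection, and the fact that distinct Reidemeister classes pull back from distinct characteristic functions on $K$ makes it injective. Hence $R(\phi)=R(\phi_K)$, and the classical Burnside--Frobenius theorem for the finite group $K$ gives $R(\phi_K)=\#\Fix(\wh{\phi_K})$.

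It remains to match $\Fix(\wh{\phi_K})$ with $\Fix(\wh\phi|_{\wh G_f})$. Pullback $F^{*}\colon \wh K\hookrightarrow \wh G_f$, $\rho_K\mapsto \rho_K\circ F$, is injective because $F$ is onto, and the commuting square immediately sends $\Fix(\wh{\phi_K})$ into $\Fix(\wh\phi|_{\wh G_f})$. The main step is surjectivity. Given $\rho\in\Fix(\wh\phi|_{\wh G_f})$, fix an invertible intertwiner $T\colon H_\rho\to H_\rho$ with $\rho(\phi(g))=T\rho(g)T^{-1}$ and form the twisted character $\chi(g):=\tr(\rho(g)T)$. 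A short calculation using the intertwining relation and cyclicity of trace shows $\chi$ is $\phi$-central, hence, since $R(\phi)<\infty$, a $\mathbb C$-linear combination of characteristic functions of Reidemeister classes; by {\rm RP}$(\phi)$ it is therefore of the form $F^{*}g_K$ and is constant on right cosets of $\ker F$. Expanding this invariance gives $\tr(\rho(g)(\rho(k)-I)T)=0$ for every $g\in G$ and every $k\in\ker F$. Irreducibility of $\rho$ together with Burnside's density theorem implies $\rho(G)$ spans $\End(H_\rho)$, so $(\rho(k)-I)T=0$, and invertibility of $T$ forces $\rho(k)=I$. Hence $\ker F\subset\ker\rho$, $\rho$ factors through $F$, and a preimage in $\Fix(\wh{\phi_K})$ exists.

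The case $S_f(\phi)<\infty$ is genuinely the hardest step, as {\rm RP}$(\phi)$ is formulated under the hypothesis $R(\phi)<\infty$ and cannot be invoked a priori. The plan is to show $R(\phi)<\infty$ and thereby reduce to the first case. The easy half $S_f(\phi)\le R(\phi)$ (for $R(\phi)$ finite) follows from the linear independence of twisted characters $\chi_\rho^{T}$ attached to distinct fixed finite-dimensional irreducibles inside the $R(\phi)$-dimensional space of $\phi$-central functions. For the converse, I would argue by contradiction: using $\phi$-conjugacy separability, choose a nested sequence of finite $\phi$-quotients $F_n\colon G\to K_n$ separating strictly growing finite families of Reidemeister classes (built via the diagonal quotient construction of the proof of Theorem \ref{teo:phiconjRP}); apply the finite Burnside--Frobenius in each $K_n$ and pull its fixed irreducibles back to $\wh G_f$. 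The obstacle is to ensure that this process produces unboundedly many genuinely distinct elements of $\Fix(\wh\phi|_{\wh G_f})$ — contradicting $S_f(\phi)<\infty$ — which is where the separation properties of the $F_n$ provided by $\phi$-conjugacy separability are needed, and which is where I expect the bulk of the technical work to reside.
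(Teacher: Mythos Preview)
Your argument for the case $R(\phi)<\infty$ is correct and takes a genuinely different route from the paper. The paper argues via a dimension count: it identifies $R(\phi)$ with the dimension of the space of $\phi$-class functions, and then uses Frobenius--Schur/Peter--Weyl linear independence of matrix coefficients to show that this space has a basis indexed by $\Fix(\wh\phi_f)$ (each fixed $\rho$ contributes a unique twisted-invariant functional, these are independent, and by {\rm RP}$(\phi)$ they span). You instead reduce directly to the finite case: you show $R(\phi)=R(\phi_K)$, invoke the finite twisted Burnside--Frobenius theorem for $K$, and then prove the stronger structural fact that \emph{every} $\rho\in\Fix(\wh\phi_f)$ factors through the single finite quotient $K$, giving a bijection $\Fix(\wh{\phi_K})\cong\Fix(\wh\phi_f)$. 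Your route is more elementary in spirit and yields this extra factoring statement as a by-product; the paper's route stays on the level of function spaces and neither needs nor obtains it. (One small correction: the equivalence you invoke is Theorem~\ref{teo:phiconjRP}, not Corollary~\ref{cor:twistconjRP}.)

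For the case $S_f(\phi)<\infty$, your sketch is honest about the obstacle but does not resolve it, and the paper's argument here is more direct than the path you outline. Rather than trying to manufacture unboundedly many \emph{distinct} pulled-back fixed irreducibles (where controlling collisions is indeed awkward), the paper works with \emph{functions}: $\phi$-conjugacy separability guarantees that any two Reidemeister classes are separated by a coefficient function of some finite-dimensional unitary representation, so if $R(\phi)=\infty$ one obtains infinitely many linearly independent $\phi$-central functions of this type; but the space of such functions has dimension at most $S_f(\phi)$, since by the same Schur/Frobenius--Schur argument used in case~1 every $\phi$-central coefficient functional comes from some $\rho\in\Fix(\wh\phi_f)$ and contributes a one-dimensional space. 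This sidesteps your obstacle entirely --- linear independence of the associated twisted characters does the counting, with no need to track equivalence classes of representations across different finite quotients.
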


\begin{proof}
The coefficients of finite-dimensional non-equivalent irreducible
representations of $G$ are linear independent by Frobenius-Schur theorem
(see \cite[(27.13)]{CurtisReiner}). Moreover, the coefficients of non-equivalent unitary
finite-dimensional irreducible representations are orthogonal to each other
as functions on the universal compact
group associated with the initial group \cite[16.1.3]{DixmierEng} by the
Peter-Weyl theorem. Hence, their linear combinations are orthogonal to each other
as well.

It is sufficient to verify the following three statements:

1) If $R(\phi)<\infty$, then each $\phi$-class function is a finite
linear combination of twisted-invariant functionals being coefficients
of points of $\Fix\wh\phi_f$.

2) If $\rho\in\Fix\wh\phi_f$, there exists one and only one
(up to scaling)
twisted invariant functional on $\rho(C^*(G))$ (this is a finite
full matrix algebra).

3) For different $\rho$ the corresponding $\phi$-class functions are
linearly independent. This follows from the remark at the beginning of
the proof.
Let us remark that the property RP implies in particular that
$\phi$-central functions (for $\phi$ with $R(\phi)<\infty$) are
functionals on $C^*(G)$, not only $L^1(G)$, i.e. are in the
Fourier-Stieltijes algebra $B(G)$.

The statement 1) follows from the RP property. Indeed,
this $\phi$-class function $f$ is a linear combination of functionals
coming from some finite collection $\{\r_i\}$ of elements
of $\wh G_f$ (these
representations $\r_1,\dots,\r_s$ are in fact representations of
the form $\pi_i\circ F$, where $\pi_i$ are irreducible representations
of the finite group $K$ and $F:G\to K$, as in the definition of RP).
So,
$$
f=\sum_{i=1}^s f_i\circ \r_i,\quad \r_i:G\to \End (V_i),
\quad f_i:\End (V_i)\to \C,  \r_i\ne \r_j,\: (i\ne j).
$$
For any $g, \til g \in G$ one has
$$
\sum_{i=1}^s f_i (\r_i(\til g ))=f(\til g )=
f(g\til g \phi(g^{-1}))=\sum_{i=1}^s f_i (\r_i(g\til g \phi(g^{-1}))).
$$
By the observation at the beginning of the proof concerning
linear independence,
$$
f_i (\r_i(\til g ))=f_i (\r_i(g\til g \phi(g^{-1}))).\qquad i=1,\dots,s,
$$
i.e. $f_i$ are twisted-invariant.  For any $\r\in \wh G_f$,
$\r:G\to \End (V)$, any
functional $\w: \End (V)\to \C$
has the form $a\mapsto \Tr(ba)$ for some fixed $b\in \End (V)$.
Twisted invariance implies twisted invariance of $b$ (evident details can
be found in \cite[Sect. 3]{FelTro}). Hence, $b$ is intertwining between
$\r$ and $\r\circ\phi$ and $\r\in \Fix(\wh\phi_f)$. The uniqueness of
intertwining operator (up to scaling) implies 2).

It remains to prove that $R(\phi)<\infty$ if $S_f(\phi)<\infty$.
By the definition of a $\phi$-conjugacy separable group
the Reidemeister classes of $\phi$ can be separated by maps to finite groups.
Hence, taking representations of these finite groups and applying
the twisted Burnside-Frobenius theorem to these groups we obtain that
for any pair of Reidemeister classes there exists a function being a
coefficient of a finite-dimensional unitary representation, which
distinguish these classes. Hence,
if $R(\phi)=\infty$, then there are infinitely
many linearly independent twisted invariant functions being
coefficients of finite dimensional
representations. But there are as many such functionals, as $S_f(\phi)$.

\end{proof}

\begin{corollary}\cite{polyc}
Let $G$ be almost polycyclic group and $\phi$  its automorphism.
Then $R(\phi)=S_f(\phi)$
if one of these numbers is finite.
\end{corollary}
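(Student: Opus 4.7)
The plan is to derive the corollary as an immediate consequence of the two main results already established in this subsection and the preceding one. More precisely, I would combine Theorem \ref{teo:almostpolaresep}, which asserts that every polycyclic-by-finite (equivalently, almost polycyclic) group is strongly twisted conjugacy separable, with Theorem \ref{teo:twburnsforRP}, which gives the identity $R(\phi)=S_f(\phi)$ under the sole hypothesis of $\phi$-conjugacy separability (provided one of these numbers is finite).

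So the first step is simply to invoke Theorem \ref{teo:almostpolaresep}: let $G$ be almost polycyclic and $\phi\colon G\to G$ an automorphism. Then $G$ is strongly twisted conjugacy separable, which by Definition \ref{dfn:twisconjsep} means that $G$ is $\phi$-conjugacy separable with no finiteness restriction on $R(\phi)$. In particular, $G$ is $\phi$-conjugacy separable for this specific $\phi$.

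The second step is to apply Theorem \ref{teo:twburnsforRP} to this $\phi$-conjugacy separable pair $(G,\phi)$. The hypothesis of that theorem is satisfied, and its conclusion is exactly the claimed equality $R(\phi)=S_f(\phi)$, valid as soon as at least one of these cardinalities is finite.

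There is essentially no obstacle here; the corollary is a purely formal consequence of the two theorems, and the only thing to verify is that the ``strongly'' in Theorem \ref{teo:almostpolaresep} supplies the $\phi$-conjugacy separability without any a priori assumption that $R(\phi)<\infty$, so that Theorem \ref{teo:twburnsforRP} can be applied symmetrically in the two cases ``$R(\phi)<\infty$'' and ``$S_f(\phi)<\infty$''. This is indeed built into Definition \ref{dfn:twisconjsep}, so the deduction goes through cleanly.
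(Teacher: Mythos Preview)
Your proposal is correct and is precisely the intended argument: the corollary is an immediate combination of Theorem \ref{teo:almostpolaresep} (almost polycyclic groups are strongly twisted conjugacy separable) with Theorem \ref{teo:twburnsforRP}. Your observation that the ``strongly'' is what makes the $S_f(\phi)<\infty$ case go through without an a priori finiteness assumption on $R(\phi)$ is exactly the point.
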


\subsection{Examples and counterexamples}\label{sec:osin}

Some of examples of groups, for which the twisted  Burnside-Frobenius
theorem in the above formulation is true, out of the class of
polycyclic-by-finite groups were obtained by F.~Indukaev \cite{IndDipl}. Namely,
it is proved that wreath products $A\wr\Z$ are RP groups,
where $A$ is a finitely
generated abelian group (these groups are residually finite).

Now let us present some counterexamples to the twisted Burnside-Frobenius
theorem in the above formulation for some discrete groups with extreme properties.
Suppose, an infinite discrete group
$G$ has a finite number of conjugacy classes.
Such examples can be found in \cite{serrtrees} (HNN-group),
\cite[p.~471]{olsh} (Ivanov group), and \cite{Osin} (Osin group).
Then evidently, the characteristic function of the unity element is not
almost-periodic and the argument above is not valid. Moreover, let us
show, that these groups give rise counterexamples to the above theorem.

In particular, they are not twisted conjugacy separable. Evidently,
they are not conjugacy separable, because they are not residually finite.

\begin{ex}\label{ex:osingroup}\cite{polyc}
For the Osin group  Reidemeister number $R(\Id)=2$,
while there is only trivial (1-dimensional) finite-dimensional
representation.
Indeed, Osin group is
an infinite finitely generated group $G$ with exactly two conjugacy classes.
All nontrivial elements of this group $G$ are conjugate. So, the group $G$
is simple, i.e. $G$ has no nontrivial normal subgroup.
This implies that group $G$ is not residually finite
(by definition of residually finite group). Hence,
it is not linear (by Mal'cev theorem \cite{malcev}, \cite[15.1.6]{Robinson})
and has no finite-dimensional irreducible unitary
representations with trivial kernel. Hence, by simplicity of $G$, it has no
finite-dimensional
irreducible unitary representation with nontrivial kernel, except of the
trivial one.

Let us remark that Osin group is non-amenable, contains the free
group in two generators $F_2$,
and has exponential growth.
\end{ex}

\begin{ex}\label{ex;ivanovgroup}\cite{polyc}
For large enough prime numbers $p$,
the first examples of finitely generated infinite periodic groups
with exactly $p$ conjugacy classes were constructed
by Ivanov as limits of hyperbolic groups (although hyperbolicity was not
used explicitly) (see \cite[Theorem 41.2]{olsh}).
Ivanov group $G$ is infinite periodic
2-generator  group, in contrast to the Osin group, which is torsion free.
The Ivanov group $G$ is also a simple group.
The proof (kindly explained to us by M. Sapir) is the following.
Denote by $a$ and $b$ the generators of $G$ described in
\cite[Theorem 41.2]{olsh}.
In the proof of Theorem 41.2 on  \cite{olsh}
it was shown that each of elements of $G$
is conjugate in $G$ to a power of generator $a$ of order $s$.
Let us consider any normal subgroup $N$ of $G$.
Suppose
$\gamma \in N$. Then $\gamma=g a^sg^{-1}$ for some $g\in G$ and some $s$.
Hence,
$a^s=g^{-1} \gamma g \in N$ and from  periodicity of $a$, it follows that also
$ a\in N$
as well as $ a^k \in N$  for any $k$, because $p$ is prime.
Then any element $h$ of $G$ also belongs to $N$
being of the form $h=\til h a^k (\til h)^{-1}$, for  some $k$, i.e., $N=G$.
Thus, the group $G$ is simple. The discussion can be completed
in the same way as in the case of Osin group.
\end{ex}

\begin{ex}
In paper \cite{HNN}, Theorem III and its corollary,
G.~Higman, B.~H.~Neumann, and H.~Neumann
proved that any locally infinite countable group $G$
can be embedded into a countable group $G^*$ in which all
elements except the unit element are conjugate to each other
(see also \cite{serrtrees}).
The discussion above related Osin group remains valid for $G^*$
groups.
\end{ex}

\subsection{Twisted Dehn conjugacy problem }\label{sec:dehn}

The subject is closely related to some decision problem. Recall that
M.~Dehn in 1912 \cite{Dehn} (see \cite[Ch. 1, \S 2; Ch. 2, \S 1]{LyndSchupp}))
has formulated in particular

\smallskip\noindent
\textbf{Conjugacy problem:} Does there exists an algorithm to determine
whether an arbitrary pair of group words $U$, $V$ in the generators of $G$ define
conjugate elements of $G$?

\smallskip
 Following question was posed by G.~Makanin \cite[Question 10.26(a)]{Kourovka}:

\smallskip\noindent
\textbf{Question:} Does there exists an algorithm to determine
whether for an arbitrary pair of group words $U$ and $V$
of a free group $G$ and an arbitrary automorphism $\phi$ of $G$
the equation $\phi(X)U=VX$ solvable in $G$?

In  \cite{BogMarVen}  the  affirmative answer  to the Makanin's question
is obtained.

\smallskip
In  \cite{BardBokutVesnin} the following problem, which generalizes the two
above problems, was posed:

\smallskip\noindent
\textbf{Twisted conjugacy problem:} Does there exists an algorithm to determine
whether for an arbitrary pair of group words $U$ and $V$ in the generators of $G$
the equality $\phi(X)U=VX$ holds for some $W\in G$ and $\phi\in H$, where $H$ is
a fixed subset of $\Aut(G)$?

\smallskip

We will discuss the twisted conjugacy problem for $H=\{\phi\}$.

\begin{theorem}\label{teo:twistconjprobforpolbfin}\cite{fts}
The twisted conjugacy problem has the affirmative answer for $G$ being
polycyclic-by-finite group and $H$ be equal to a unique automorphism $\phi$.
\end{theorem}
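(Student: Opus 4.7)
The plan is to reduce the twisted conjugacy problem for $G$ to the ordinary conjugacy problem for the semidirect product $\Gamma = G \rtimes_\phi \Z$ defined in equation \eqref{eq:defgamma}, and then invoke the classical solvability of the conjugacy problem for polycyclic-by-finite groups.

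First I would set up the reduction. Given a finite presentation of $G$ (available because polycyclic-by-finite groups are finitely presented) and the automorphism $\phi$ specified by its values on the generators of $G$, I write down the presentation \eqref{eq:defgamma} of $\Gamma = G \rtimes_\phi \Z$ explicitly: we adjoin a new generator $t$ and the relations $t g t^{-1} = \phi(g)$ for each generator $g$ of $G$. Given arbitrary words $U$ and $V$ in the generators of $G$, the equation $\phi(X) U = V X$ has a solution $X \in G$ if and only if $U$ and $V$ are $\phi$-conjugate in $G$, which by Lemma \ref{lem:HillRanbij} is equivalent to the words $Ut$ and $Vt$ being conjugate in $\Gamma$ in the usual sense.

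Next I would verify that $\Gamma$ lies in the same class. This is exactly the point already established at the start of the proof of Theorem \ref{teo:almostpolaresep}: if $P$ is a characteristic polycyclic subgroup of finite index in $G$, then $P \rtimes \Z$ is polycyclic, is normal in $\Gamma = G \rtimes \Z$, and has the same finite index. Hence $\Gamma$ is again polycyclic-by-finite.

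Finally I would apply the classical theorem of Remeslennikov and Formanek (see \cite{Remes,Form} and \cite[Ch.~4]{DSegalPoly}) asserting that polycyclic-by-finite groups have solvable conjugacy problem: there is a uniform algorithm which, on input a finite presentation of a polycyclic-by-finite group and two words, decides whether these words represent conjugate elements. Applying this algorithm to $\Gamma$ with inputs $Ut$ and $Vt$ decides the twisted conjugacy problem for $(G,\phi)$. The main conceptual content is the reduction via Lemma \ref{lem:HillRanbij}; the only potential obstacle is purely algorithmic, namely checking that a presentation of $\Gamma$ can be constructed effectively from the given data for $(G,\phi)$, which is routine since $\phi$ is provided on generators. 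No separate difficulty arises from the twisting itself beyond what is absorbed into the conjugacy problem in $\Gamma$.
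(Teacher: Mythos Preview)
Your argument is correct, but it takes a different route from the paper's own proof. The paper deduces the result from Theorem~\ref{teo:almostpolaresep} (strong twisted conjugacy separability of polycyclic-by-finite groups) together with Mal'cev's classical argument \cite{MalcevIvanovo,Mostowski} that (twisted) conjugacy separability of a finitely presented group yields a decision procedure: run in parallel a search for a conjugator in $G$ and a search through finite quotients for a witness of non-$\phi$-conjugacy. You instead bypass twisted separability entirely by passing to $\Gamma=G\rtimes_\phi\Z$ via Lemma~\ref{lem:HillRanbij} and invoking the ordinary conjugacy problem there. This is a cleaner shortcut, since the paper's proof of Theorem~\ref{teo:almostpolaresep} itself goes through $\Gamma$ (Theorems~\ref{teo:razdelgamg} and~\ref{teo:conjsepandRP}); you are effectively collapsing two reductions into one.

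One small attribution point: Remeslennikov \cite{Remes} and Formanek \cite{Form} proved conjugacy \emph{separability} for polycyclic(-by-finite) groups, not the decidability of the conjugacy problem directly. Decidability then follows by Mal'cev's argument, or alternatively by the explicit algorithms for polycyclic groups developed later (see, e.g., \cite{DSegalPoly}). Since $\Gamma$ is a single fixed group here, you do not actually need the uniformity claim you mention; solvability of the conjugacy problem in the fixed group $\Gamma$ suffices.
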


\begin{proof}
It follows immediately from Theorem \ref{teo:almostpolaresep} by the same
argument as in the paper of
Mal'cev \cite{MalcevIvanovo} (see also \cite{Mostowski}, where the propperty
of conjugacy separability was first formulated)
for the (non-twisted) conjugacy problem.
\end{proof}
In fact we have proved the following statement.
\begin{theorem}\cite{fts}
If $G$ is strongly twisted conjugacy separable then the twisted
Dehn conjugacy problem is solvable for any  automorphism of $G$.
\end{theorem}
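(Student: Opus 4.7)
The plan is to adapt the Mal'cev-style dovetailing argument that already established Theorem \ref{teo:twistconjprobforpolbfin}, replacing the specific appeal to polycyclic-by-finite conjugacy separability by the hypothesis of strong twisted conjugacy separability. I would work in the standard setting for a Dehn-type problem: $G$ is finitely presented with decidable word problem, and the automorphism $\phi$ is given effectively (so that, given a word $W$ in the generators, one can compute a word representing $\phi(W)$).

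First I would run two semi-algorithms in parallel by dovetailing. The positive semi-algorithm enumerates all elements $X\in G$ as words in the generators and, for each candidate, tests whether $\phi(X)U=VX$ holds in $G$; this test is effective by decidability of the word problem. If such an $X$ ever appears, the procedure halts and outputs ``solvable''. The negative semi-algorithm enumerates all triples $(K,F,\phi_K)$, where $K$ is a finite group, $F\colon G\to K$ is an epimorphism, $\phi_K\in\Aut(K)$, and the square $F\circ\phi=\phi_K\circ F$ commutes (i.e.\ $\Ker F$ is a $\phi$-invariant normal subgroup of finite index). For each such triple it checks whether $F(U)$ and $F(V)$ lie in distinct $\phi_K$-conjugacy classes of $K$, a finite computation. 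If so, it halts and outputs ``unsolvable''.

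Termination is guaranteed by the following dichotomy. If the equation has a solution, a witness eventually appears in the first enumeration, so the positive branch halts. If it has no solution, then $U$ and $V$ lie in distinct $\phi$-twisted conjugacy classes of $G$ (after rewriting $\phi(X)U=VX$ as an instance of twisted conjugacy for $\phi$ or $\phi^{-1}$), and strong twisted conjugacy separability supplies a finite $\phi$-respecting quotient separating these classes; such a quotient eventually appears in the second enumeration, so the negative branch halts.

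The only delicate point is that the second semi-algorithm must enumerate finite quotients together with a compatible automorphism, rather than arbitrary finite quotients of $G$; but this is exactly the structure captured by Definition \ref{dfn:Rperiodi} and by our notion of $\phi$-conjugacy separability, and enumeration of such triples is effective given a finite presentation of $G$ and an effective description of $\phi$. I expect no essential obstacle beyond the bookkeeping already present in the proof of Theorem \ref{teo:twistconjprobforpolbfin}: the argument is the same, with the class-specific input (polycyclic-by-finite implies strongly twisted conjugacy separable) replaced by the direct hypothesis of the theorem.
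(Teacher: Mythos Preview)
Your proposal is correct and follows essentially the same approach as the paper: the paper's proof is simply the observation that the Mal'cev dovetailing argument used for Theorem~\ref{teo:twistconjprobforpolbfin} only consumed the conclusion ``strongly twisted conjugacy separable'' (via Theorem~\ref{teo:almostpolaresep}), not anything specific to polycyclic-by-finite groups, and you have spelled out precisely that argument. Your added standing hypotheses (finite presentation, effective description of $\phi$) are implicit in the paper's formulation of the Dehn-type problem; note also that decidability of the word problem is not strictly needed for the positive branch, since for a finitely presented group one can instead enumerate derivations of the identity $\phi(X)U X^{-1}V^{-1}=1$ from the relators.
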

\begin{corollary}
 The twisted conjugacy problem has the affirmative answer for $G$ being  wreath products $A\wr\Z$ and any  automorphism of $G$,
where $A$ is a finitely
generated abelian group.

\end{corollary}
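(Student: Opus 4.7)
The plan is to apply the immediately preceding theorem to $G=A\wr\Z$: it suffices to show that such a $G$ is (strongly) twisted conjugacy separable, and the solvability of the twisted Dehn conjugacy problem then follows by the Mal'cev parallel-enumeration argument used in the proof of Theorem~\ref{teo:twistconjprobforpolbfin}. On one track one enumerates $g\in G$ and tests the equation $\phi(g)U=Vg$ (this halts positively whenever $U$ and $V$ are $\phi$-conjugate); on the other, one enumerates finite quotients $F:G\to K$ respecting $\phi$ and tests non-$\phi_K$-conjugacy of $F(U)$ and $F(V)$ in $K$ (this halts negatively whenever the pair is separable by a $\phi$-respecting finite quotient). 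Both enumerations are effective because $A\wr\Z$ is recursively presented and $\phi$-invariant normal subgroups of finite index can be listed.

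The first ingredient is Indukaev's result recalled in Section~\ref{sec:osin}, which asserts that every wreath product $A\wr\Z$ with $A$ a finitely generated abelian group has property {\rm RP} and is residually finite. By Corollary~\ref{cor:twistconjRP} this is equivalent to $\phi$-conjugacy separability for every automorphism $\phi$ with $R(\phi)<\infty$. This already handles the Dehn problem on automorphisms with finite Reidemeister number.

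To remove the restriction $R(\phi)<\infty$ and secure \emph{strong} twisted conjugacy separability, I would invoke Theorem~\ref{teo:conjsepandRP}: it is enough to exhibit a class containing $A\wr\Z$, consisting of conjugacy separable groups, and closed under taking semidirect products by $\Z$. Concretely, one verifies that $A\wr\Z$ is itself conjugacy separable in the classical (untwisted) sense, and that $(A\wr\Z)\rtimes_\phi\Z$ remains in the class for every $\phi\in\Aut(A\wr\Z)$. The iterates of this operation stay within a family of metabelian-by-(finitely generated abelian) groups, where conjugacy separability is amenable to a direct check using the structure of the base group $\bigoplus_{n\in\Z}A$ and the usual Hall/Remeslennikov type arguments.

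Once strong twisted conjugacy separability is established, the corollary is immediate from the preceding theorem. The main obstacle I anticipate is precisely the closure-under-$\rtimes\Z$ step: automorphisms of $A\wr\Z$ need not preserve the canonical wreath decomposition, so one must analyse the action of $\phi$ on $\bigoplus_{n\in\Z}A$ together with the shift by hand, rather than citing a ready-made theorem. Modulo this technical point, the argument mirrors the polycyclic-by-finite case treated in Theorem~\ref{teo:twistconjprobforpolbfin}, with Indukaev's RP theorem replacing the Remeslennikov--Formanek conjugacy separability of polycyclic-by-finite groups.
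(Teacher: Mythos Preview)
The paper's proof is a one-line citation: it attributes to Indukaev \cite{IndDipl} the fact that $A\wr\Z$ is \emph{strongly} twisted conjugacy separable, and then applies the immediately preceding theorem. Your proposal underestimates the content of \cite{IndDipl}: you extract from Section~\ref{sec:osin} only the {\rm RP} property (equivalently, twisted conjugacy separability under the hypothesis $R(\phi)<\infty$), whereas the paper's proof invokes the strong version directly. This misreading forces you into an unnecessary upgrade step from {\rm RP} to strong twisted conjugacy separability.

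That upgrade step, via Theorem~\ref{teo:conjsepandRP}, is where your argument has a genuine gap that you yourself identify. To apply Theorem~\ref{teo:conjsepandRP} you would need a class of conjugacy separable groups containing $A\wr\Z$ and closed under arbitrary semidirect products by $\Z$. Since $A\wr\Z$ is not polycyclic (the base $\bigoplus_{\Z}A$ is not finitely generated when $A\ne 0$), the Remeslennikov--Formanek machinery does not apply, and establishing conjugacy separability of $(A\wr\Z)\rtimes_\phi\Z$ for an arbitrary automorphism $\phi$ is a substantial task that you do not carry out. Your ``Hall/Remeslennikov type arguments'' gesture is not a proof: automorphisms of $A\wr\Z$ need not respect the wreath structure, and the resulting extensions are finitely generated metabelian-by-abelian groups for which conjugacy separability is far from automatic. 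In short, the paper avoids this difficulty entirely by citing the stronger result as a black box; your route would require independent and nontrivial work that the proposal leaves open.
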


\begin{proof}
It is proved in  \cite {IndDipl}  that $G$ is strongly twisted conjugacy separable.
\end{proof}

Also one can study some more particular cases of this problem.
In particular, one has
\begin{theorem}\cite{fts}
Let $G$ be a $\phi$-conjugacy separable group. Then the twisted
Dehn conjugacy problem is solvable for $\phi$.
\end{theorem}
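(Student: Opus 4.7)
The plan is to adapt Mal'cev's classical double-enumeration algorithm for the conjugacy problem to the twisted setting, exactly as in the proof of Theorem~\ref{teo:twistconjprobforpolbfin}. Given words $U, V$ in the generators of $G$, the twisted Dehn conjugacy problem asks whether $\phi(X)U = VX$ admits a solution $X \in G$. The crucial observation is that this equation is compatible with any epimorphism $F\colon G\to K$ intertwining $\phi$ with an automorphism $\phi_K$ of a finite group $K$: a solution $X$ in $G$ projects to a solution $Y = F(X)$ of $\phi_K(Y) F(U) = F(V) Y$ in $K$. Moreover, the substitution $h = \phi(X)$ rewrites the equation as $V = h\, U\, \phi^{-1}(h^{-1})$, exhibiting it as the condition ``$U$ and $V$ are $\phi^{-1}$-conjugate''; and since $y = gx\phi(g)^{-1}$ forces $y^{-1} = \phi(g)\, x^{-1}\, g^{-1}$, which has the form $h\, x^{-1}\, \phi^{-1}(h^{-1})$ with $h = \phi(g)$, the map $x\mapsto x^{-1}$ is a bijection from $\phi$-conjugacy classes onto $\phi^{-1}$-conjugacy classes. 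Hence $\phi$-conjugacy separability of $G$ transfers directly to $\phi^{-1}$-conjugacy separability with the same family of finite quotients.

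I would then run two semi-decision procedures in parallel. Procedure (A), the positive search, enumerates all elements $X \in G$ as words in the generators and tests the equality $\phi(X)U = VX$ in $G$; it halts whenever the equation has a solution. Procedure (B), the negative search, enumerates all triples $(K, \phi_K, F)$ with $K$ a finite group (given by multiplication table), $\phi_K \in \Aut(K)$, and $F\colon G\to K$ an epimorphism satisfying $F\circ\phi = \phi_K\circ F$; for each such triple it performs the finite check of whether the projected equation $\phi_K(Y) F(U) = F(V) Y$ has no solution $Y \in K$, halting with the answer NO in that case. By the transferred separability, whenever the original equation has no solution in $G$ some such triple witnesses the failure, so Procedure (B) halts in the negative case. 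Together, (A) and (B) yield a total decision algorithm.

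The main obstacle is the effectiveness of the two enumerations, which implicitly requires $G$ to be finitely (or at least recursively) presented with solvable word problem. This is the standard background hypothesis for any Dehn-type decision problem and is already used in the proof of Theorem~\ref{teo:twistconjprobforpolbfin} via the Mal'cev--Mostowski technique \cite{MalcevIvanovo,Mostowski}. Under this hypothesis, Procedure (A) is routine; for Procedure (B) one enumerates finite groups by multiplication tables, candidate homomorphisms $F$ by their values on the finitely many generators of $G$ (checked against the finitely many defining relations), and the compatibility $F\circ\phi = \phi_K\circ F$ generator-by-generator. Once these bookkeeping points are handled, the conclusion follows from the separability hypothesis essentially by definition.
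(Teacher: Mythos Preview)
Your proposal is correct and follows exactly the approach the paper intends: the theorem is stated there without proof, immediately after Theorem~\ref{teo:twistconjprobforpolbfin}, with the clear implication that the same Mal'cev--Mostowski double-enumeration argument applies once one has $\phi$-conjugacy separability in hand. Your explicit treatment of the $\phi$ versus $\phi^{-1}$ mismatch (the Makanin-style equation $\phi(X)U=VX$ encodes $\phi^{-1}$-conjugacy, and you transfer separability via $x\mapsto x^{-1}$) is a careful detail the paper glosses over, but it is a refinement of the same method rather than a different route.
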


From Corollary 3.4 and Proposition 3.5 in \cite{Martino} one can
obtain
\begin{theorem}\cite{fts}
Suppose $G$ is the fundamental group of a closed hyperbolic surface
and $\phi:G\to G$ is virtually inner. Then the twisted
Dehn conjugacy problem is solvable for $\phi$.
\end{theorem}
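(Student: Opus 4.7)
The strategy is to reduce this statement to the immediately preceding theorem, which asserts that whenever $G$ is $\phi$-conjugacy separable, the twisted Dehn conjugacy problem is solvable for $\phi$. Accordingly, the whole task boils down to establishing that the fundamental group $G$ of a closed hyperbolic surface is $\phi$-conjugacy separable for every virtually inner automorphism $\phi$.

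As a warm-up I would first handle the purely inner case. If $\phi = \mathrm{Inn}_{g_0}$ for some $g_0\in G$, a direct computation shows that $x,y$ are $\phi$-conjugate iff $xg_0,yg_0$ are conjugate in the ordinary sense in $G$. Combined with the classical conjugacy separability of closed hyperbolic surface groups, this already yields $\phi$-conjugacy separability for every inner $\phi$. The real content of the theorem is therefore the passage from inner to virtually inner, and this is exactly what Martino's cited Corollary 3.4 and Proposition 3.5 provide.

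The natural route is the semidirect product construction of Section~\ref{sec:prelim}. Form $\G = G\rtimes_\phi \Z$ and use Lemma~\ref{lem:HillRanbij} to identify the $\phi$-conjugacy classes of $G$ with the $\G$-conjugacy classes contained in the single coset $G\cdot t$. Since some power $\phi^n$ is inner, $\G$ contains a finite-index subgroup that, after absorbing the inner twist, is isomorphic to a product of $G$ with $\Z$; Martino's results then guarantee that any two non-$\G$-conjugate elements lying in $G\cdot t$ can be separated in some finite quotient of $\G$. By Theorem~\ref{teo:razdelgamg}, the restriction of such a quotient to $G$ separates the corresponding $\phi$-conjugacy classes in $G$. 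This yields $\phi$-conjugacy separability of $G$ with respect to the given $\phi$, and the preceding theorem then mechanically delivers the algorithm solving the twisted Dehn conjugacy problem for $\phi$.

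The main obstacle I anticipate is precisely the virtually-inner-to-inner reduction inside $\G$: classical conjugacy separability of the surface group $G$ does not automatically propagate to $\G$, so one must carefully separate the relevant $\G$-conjugacy classes lying in the coset $G\cdot t$, which is the substantive input supplied by Martino. Checking that his finite-quotient separation is compatible with the bijection of Lemma~\ref{lem:HillRanbij} — so that Theorem~\ref{teo:razdelgamg} actually applies — is the bookkeeping step, after which the preceding theorem closes out the argument.
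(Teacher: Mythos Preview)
Your proposal is correct and follows essentially the same route as the paper. The paper's own proof consists solely of the citation ``From Corollary 3.4 and Proposition 3.5 in \cite{Martino} one can obtain'' the theorem; your outline simply makes explicit how Martino's conjugacy-separability results for Seifert 3-manifold groups and virtual surface groups feed through the semidirect product construction (Lemma~\ref{lem:HillRanbij} and Theorem~\ref{teo:razdelgamg}) to give $\phi$-conjugacy separability, after which the preceding theorem applies.
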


Recently, in \cite{BogMarVen1} the twisted Dehn conjugacy problem
was solved for virtually surface groups and an example of a finitely
presented group with solvable conjugacy problem but unsolvable
twisted conjugacy problem was given.

\subsection{Some questions}\label{sec:quest}\cite{fts}
It is evident, that any conjugacy separable group is residually
finite (because the unity element is an entire conjugacy class).
This argument does not work for general Reidemeister classes.
In this relation we have  formulated in \cite{fts} several open  questions:

\smallskip
\textbf{Question 1:} Does the $\phi$-conjugacy separability imply
residually finiteness?

\smallskip
\textbf{Question 2:} Does the $\phi$-conjugacy separability imply
residually finiteness, provided $R(\phi)<\infty$?

\smallskip
\textbf{Question 3:} Does the twisted conjugacy separability imply
residually finiteness, provided the existence of $\phi$ with
$R(\phi)<\infty$?

\smallskip
\textbf{Question 4:} Let $G$ be a residually finite group and
$\phi$ its automorphism with $R(\phi)<\infty$. Is $G$
$\phi$-conjugacy separable ?

\smallskip
The affirmative answer to the last question implies twisted
Burnside-Frobenius theorem for $\phi$.

\section{Groups with \emph{property } $R_\infty$}

Consider a group extension respecting homomorphism $\phi$:
\begin{equation}\label{eq:extens}
 \xymatrix{
0\ar[r]&
H \ar[r]^i \ar[d]_{\phi'}&  G\ar[r]^p \ar[d]^{\phi} & G/H \ar[d]^{\ov{\phi}}
\ar[r]&0\\
0\ar[r]&H\ar[r]^i & G\ar[r]^p &G/H\ar[r]& 0,}
\end{equation}
where $H$ is a normal subgroup of $G$.
First, notice that the Reidemeister classes of $\phi$ in $G$
are mapped epimorphically onto classes of $\ov\phi$ in $G/H$. Indeed,
\begin{equation}\label{eq:epiofclassforexs}
p(\til g) p(g) \ov\phi(p(\til g^{-1}))= p (\til g g \phi(\til g^{-1}).
\end{equation}
Suppose that the Reidemeister number  $R(\ov\phi)$ is infinite, the previous remark then implies that the Reidemeister number
$R(\phi)$ is infinite.

Let $G$ be a group, and let  $\varphi$ be  an automorphism of $G$
of order $m$. $G_\varphi$ be the group $G\rtimes_\varphi \Z_m = \langle
 G,t | \, \forall g\in G,
tgt^{-1} = \varphi(g),\, t^m=1 \rangle $.

The following lemma was proven by Delzant.

\begin{lemma}\label{del} \cite[Lemma 3.4]{ll}
If $K$  is a normal subgroup of a group $\Gamma$
acting
non-elementarily on a hyperbolic space, and if $\Gamma/K$ is Abelian,
then
any coset of $K$ contains infinitely many conjugacy classes.
\end{lemma}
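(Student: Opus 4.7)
The plan is to use the stable translation length $\ell(\cdot)$ on the hyperbolic space $X$ as a $\Gamma$-conjugation invariant, and to exhibit, inside any given coset $gK$, infinitely many elements whose translation lengths are pairwise distinct. First note that since $\Gamma/K$ is abelian, $[\Gamma,\Gamma]\subseteq K$, so for any $a\in\Gamma$ and any $x\in gK$ the conjugate $axa^{-1}$ lies in $g[\Gamma,\Gamma]K=gK$; in particular $\Gamma$-conjugation preserves each coset, and the claim is exactly that $gK$ is a union of infinitely many $\Gamma$-conjugacy classes.

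Step two: I would show that $K$ itself acts non-elementarily on $X$. Indeed, its limit set $\Lambda(K)\subseteq\partial X$ is $\Gamma$-invariant by normality of $K$, so if $|\Lambda(K)|\le 2$ then $\Gamma$ would preserve this set, contradicting the non-elementarity of the $\Gamma$-action. Consequently $K$ contains two independent hyperbolic isometries, and in particular there are hyperbolic $h\in K$ with arbitrarily large translation length whose axes can be chosen with endpoints in any prescribed open subset of $\Lambda(K)\times\Lambda(K)$.

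Step three: fix $g\in\Gamma$ and choose a hyperbolic $h\in K$ whose endpoint pair $\{h^+,h^-\}\subset\partial X$ is not swapped by $g$ (equivalently, $g$ does not conjugate $h$ to $h^{-1}$). This is possible because the set of pairs of points in $\partial X$ swapped by a single isometry $g$ is highly constrained, whereas $K$ supplies hyperbolic elements with endpoints dense in $\Lambda(K)$; if no such $h$ existed, applying the inversion property of $g$ to two independent hyperbolics $h_1,h_2\in K$ and to suitable products $h_1^p h_2^q$ would yield a ping-pong contradiction. Once $h$ is fixed, the elements $gh^n\in gK$ ($n\in\Z$) satisfy the standard $\delta$-hyperbolic estimate
\[
\ell(gh^n)\;=\;n\,\ell(h)+O(1)\qquad (|n|\to\infty),
\]
because for large $n$ the isometry $gh^n$ is hyperbolic with axis at bounded Hausdorff distance from a $g$-translate of the axis of $h$, and translates by approximately $n\,\ell(h)$ along it. Hence $\{\ell(gh^n):n\in\Z\}$ is unbounded and takes infinitely many values; since $\ell$ is invariant under $\Gamma$-conjugation, the coset $gK$ decomposes into infinitely many conjugacy classes.

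The main obstacle is the third step: one must simultaneously guarantee that the chosen $h$ is not inverted by $g$ (otherwise $gh^n$ can degenerate into an involution or an element of uniformly bounded translation length) and that the composition $gh^n$ satisfies the asymptotic length estimate above. The first point uses the richness of hyperbolic elements in $K$ provided by step two; the second is a standard thin-triangle / ping-pong argument, routine in the proper $\delta$-hyperbolic setting but requiring the usual care in full generality.
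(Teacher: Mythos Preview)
The paper does not actually prove this lemma; it is merely quoted from \cite[Lemma~3.4]{ll} and attributed there to Delzant. Your outline is correct and follows precisely the strategy of the original argument: use stable translation length as a conjugacy invariant, observe that $K$ itself acts non-elementarily (your Step~2, via $\Gamma$-invariance of $\Lambda(K)$), and then produce in each coset $gK$ a sequence $gh^n$ with unbounded translation length by choosing a hyperbolic $h\in K$ whose attracting fixed point is not sent by $g$ to its repelling one (your Step~3).

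Two minor remarks on Step~3. First, the condition you really need is $g(h^+)\ne h^-$; your hypothesis that $g$ does not swap $\{h^+,h^-\}$ guarantees this for $h$ or for $h^{-1}$, so after possibly replacing $h$ by $h^{-1}$ you are fine. Second, the sharp asymptotic $\ell(gh^n)=n\,\ell(h)+O(1)$ is stronger than necessary and slightly delicate to justify in full generality; the weaker statement $\ell(gh^n)\to\infty$, which is what the standard source--sink dynamics and ping-pong estimates give most directly, already yields infinitely many distinct values of $\ell$ on $gK$ and hence infinitely many conjugacy classes.
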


\begin{theorem}\label{dam}\cite{dfg}
 If $G_\varphi$ has a non-elementary action by isometries on a
Gromov-hyperbolic length space, then $G$ has infinitely many
$\varphi$-twisted conjugacy classes.
\end{theorem}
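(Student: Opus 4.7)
The strategy is to reduce the statement to Delzant's Lemma \ref{del} via the semidirect product reformulation from Lemma \ref{lem:HillRanbij}, adapted to the finite-order case.

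First, I would set $\Gamma := G_\varphi = G\rtimes_\varphi \Z_m$ and view $G$ as a normal subgroup of $\Gamma$. Then $\Gamma/G \cong \Z_m$ is cyclic, hence abelian, and by hypothesis $\Gamma$ acts non-elementarily by isometries on a Gromov-hyperbolic length space. Thus the hypotheses of Lemma \ref{del} are met with $K=G$, and we conclude that the coset $G\cdot t$ of $G$ in $\Gamma$ contains infinitely many $\Gamma$-conjugacy classes.

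Next, I would establish the finite-order analogue of Lemma \ref{lem:HillRanbij}: two elements $x,y \in G$ are $\varphi$-conjugate in $G$ if and only if $xt$ and $yt$ are conjugate in $\Gamma$. The computation is the same as in the proof of Lemma \ref{lem:HillRanbij}. In one direction, if $y = gx\varphi(g^{-1})$ for some $g\in G$, then using $\varphi(g)=tgt^{-1}$ one gets $g(xt) = (yt)g$. Conversely, if $(gt^n)(xt)(gt^n)^{-1} = yt$ in $\Gamma$, expanding gives $g\varphi^n(x) = y\varphi(g)$, so $y$ is $\varphi$-conjugate to $\varphi^n(x)$; since $\varphi(x) = x^{-1}\cdot x\cdot \varphi(x)$ shows that $x$ and $\varphi(x)$ are always $\varphi$-conjugate, induction yields that $\varphi^n(x)$ and $x$ are $\varphi$-conjugate, hence so are $y$ and $x$. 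This provides a bijection between $\varphi$-conjugacy classes in $G$ and $\Gamma$-conjugacy classes contained in $G\cdot t$.

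Combining these two steps, the coset $G\cdot t$ contains infinitely many $\Gamma$-conjugacy classes, and each corresponds to a distinct $\varphi$-twisted conjugacy class in $G$. Therefore $R(\varphi)=\infty$.

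The argument is essentially a packaging of Delzant's result with the standard semidirect product correspondence, so I do not foresee a genuine obstacle; the only point requiring care is the verification that the bijection of Lemma \ref{lem:HillRanbij} goes through when $t$ has finite order rather than infinite order, and this is immediate from the same algebraic manipulation once one notes that $x$ and $\varphi^n(x)$ are always $\varphi$-conjugate in $G$.
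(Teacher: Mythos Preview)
Your proof is correct and follows essentially the same route as the paper, which simply cites Lemma~\ref{lem:HillRanbij} and Delzant's Lemma~\ref{del} without further comment. You have added the useful observation that Lemma~\ref{lem:HillRanbij} is stated for $G\rtimes\Z$ rather than $G\rtimes_\varphi\Z_m$, and verified explicitly that the same algebraic manipulation carries over to the finite-order setting; this is exactly the detail the paper leaves implicit.
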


\begin{proof}
By elementary action, we mean an action consisting of elliptic
 elements,
or
with a global fixed point, or a global fixed pair, in the boundary of
the
hyperbolic space.
The  statement of the theorem follows  immediately from Lemma  \ref{lem:HillRanbij} and Delzant Lemma \ref{del}
\end{proof}

\begin{theorem}

 The following groups  have  $R_\infty$ \emph{property }:
 
(1) non-elementary Gromov hyperbolic groups \cite{FelPOMI,ll}

(2) non-elementary  relatively hyperbolic groups,

(3) the mapping class groups  $Mod_{S}$ of a compact  surface $S$(with a  few exceptions) \cite{dfg},

(4) the full braid groups  $B_n(S^2)$ on $n$ strings of  the sphere $S^2$ \cite{dfg},

\end{theorem}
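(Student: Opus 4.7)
The plan is to deduce all four cases from Theorem \ref{dam}, which reduces $R_\infty$ to exhibiting, for every automorphism $\varphi$ of $G$, a non-elementary action of the mapping-torus-like group (the semidirect product $G\rtimes_\varphi\Z$, or $G_\varphi=G\rtimes_\varphi\Z_m$ when $\varphi$ has finite order $m$) on some Gromov-hyperbolic length space. In each case the underlying group $G$ already carries a canonical non-elementary isometric action on a hyperbolic space $X$, and the issue is to promote this to an action of the extended group.

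For case (1), I would take $X$ to be the Cayley graph of the non-elementary hyperbolic group $G$, which is $\delta$-hyperbolic and on which $G$ acts non-elementarily. When $\varphi$ has finite order, $G_\varphi$ contains $G$ with finite index, so the $G$-action extends (up to finite index) to a non-elementary $G_\varphi$-action on $X$ and Theorem \ref{dam} applies. For $\varphi$ of infinite order one reduces to the finite-order situation by passing to a suitable power of $\varphi$ acting on $Out(G)$, or alternatively one invokes the Levitt--Lustig boundary argument: the induced homeomorphism $\partial\varphi$ on $\partial G$ fixes the attracting/repelling pair of some hyperbolic element $g\in G$, and then the abelian quotient $G\rtimes\Z/G\cong\Z$ together with Delzant's Lemma \ref{del} produces infinitely many conjugacy classes in the coset $Gt$, which by Lemma \ref{lem:HillRanbij} translate to infinitely many $\varphi$-twisted conjugacy classes. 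Case (2) is analogous, with $X$ replaced by the cusped or coned-off Cayley graph that realises the relative hyperbolic structure; the normal subgroup $G$ still acts non-elementarily and the action extends to $G_\varphi$.

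For (3), I would use the Masur--Minsky theorem asserting that the curve complex $\mathcal{C}(S)$ is Gromov-hyperbolic, together with the classical fact that $Mod_S$ acts non-elementarily on $\mathcal{C}(S)$ whenever $S$ has enough complexity (pseudo-Anosov elements provide loxodromic isometries with independent fixed points on $\partial\mathcal{C}(S)$). By the Dehn--Nielsen--Baer theorem, every automorphism $\varphi$ of $Mod_S$ is geometric, i.e.\ induced by a self-homeomorphism of $S$, so $\varphi$ itself acts on $\mathcal{C}(S)$ and one can form an extension $Mod_S\rtimes_\varphi\Z$ acting on $\mathcal{C}(S)$; the action is non-elementary because the index of $Mod_S$ inside the extension is at most countable and the non-elementary dynamics is inherited. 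Theorem \ref{dam} then yields $R(\varphi)=\infty$, with the exceptions corresponding precisely to the low-complexity surfaces where $Mod_S$ fails to act non-elementarily on $\mathcal{C}(S)$. For case (4), I would exploit the well-known central extension relating $B_n(S^2)$ to the mapping class group of the $n$-punctured sphere: $B_n(S^2)$ surjects (modulo a finite centre) onto $Mod_{0,n}$, which acts non-elementarily on the curve complex of the punctured sphere for $n$ sufficiently large. Pulling this action back through the surjection gives a non-elementary action of $B_n(S^2)$, and any automorphism $\varphi$ of $B_n(S^2)$ descends to one of the quotient by the argument surrounding diagram \eqref{eq:extens}, so Theorem \ref{dam} again applies.

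The main obstacle throughout is verifying \emph{non-elementarity} of the extended action of $G\rtimes_\varphi\Z$ (not merely of $G$). For (1) and (2) the delicate point is ruling out a global fixed pair on the boundary for the $t$-translate, which is exactly where Delzant's lemma is invoked; in the mapping class group setting (3)--(4) the analogous subtlety is showing that $\varphi$ itself does not swap or stabilise the attracting/repelling pair of every pseudo-Anosov in a way that would produce an elementary extension. In both settings the finite list of exceptional low-complexity surfaces in (3) and small values of $n$ in (4) reflect exactly the cases where this non-elementarity fails and a separate, direct verification of $R_\infty$ (or of its failure) is required.
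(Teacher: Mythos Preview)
Your finite-order case in (1)--(2) matches the paper, but the infinite-order case has a genuine gap. ``Passing to a suitable power of $\varphi$'' cannot reduce to the finite-order situation when $\varphi$ has infinite order in $Out(G)$: no power becomes inner. Your alternative via Delzant's Lemma requires $G\rtimes_\varphi\Z$ to act non-elementarily \emph{by isometries} on some hyperbolic space, and you implicitly take this space to be the Cayley graph of $G$ (or its boundary); but the generator $t$ acts there only as a quasi-isometry via $\varphi$, not as an isometry, so the hypothesis of Lemma~\ref{del} is not available. The paper (following \cite{ll}, \cite{p}, \cite{bsz}) handles the infinite-order case by entirely different machinery: Paulin's theorem produces a nontrivial minimal small $G$-action on an $\R$-tree $T$ whose length function satisfies $l\circ\Phi=\lambda l$ for some $\lambda\ge 1$, and one then argues separately for $\lambda=1$ and for $\lambda>1$, the latter using Bestvina--Feighn's finiteness results on arc stabilisers and local orbit structure. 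You do not mention $\R$-trees at all, and without them there is no hyperbolic space carrying the required isometric action of the extension.

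For (3) you invoke Dehn--Nielsen--Baer, but that theorem concerns automorphisms of \emph{surface groups}, not of $Mod_S$. The relevant input, used in the paper, is Ivanov's theorem \cite{iva} that $Out(Mod_S)\cong\{\overline{\varphi_0},\overline{\varphi_1}\}$. Since $R(\varphi)$ depends only on the class of $\varphi$ in $Out$, one need only check $\varphi_0=\Id$ (where $R=\infty$ because $Mod_S$ has infinitely many conjugacy classes) and the orientation-reversing $\varphi_1$, for which $(Mod_S)_{\varphi_1}\cong Mod_S^*$ already acts non-elementarily on the curve complex and Theorem~\ref{dam} applies directly. Your attempt to build an action of $Mod_S\rtimes_\varphi\Z$ on $\mathcal{C}(S)$ for arbitrary $\varphi$ is both more complicated and rests on the mis-cited theorem. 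For (4) the paper does not pull back any action to $B_n(S^2)$; it simply observes that the centre $\Z_2$ is characteristic, so $\varphi$ descends to $\overline\varphi$ on $Mod_{S^2_n}$, and then the remark following \eqref{eq:extens} (that $R(\overline\varphi)=\infty$ forces $R(\varphi)=\infty$) finishes the argument.
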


\begin{proof}

(1)-(2): 
 Theorem \ref{dam} applies if $G$ is a Gromov-hyperbolic group 
 or relatively hyperbolic group and
if $\varphi$ has finite order in $Out(G)$. In fact, in this case,
$G_\varphi$ contains $G$ as a subgroup of finite index, thus is
quasi-isometric to $G$, and by quasi-isometry invariance, it is itself
 a Gromov-hyperbolic or relatively hyperbolic group.
Now let  assume that  a automorphism of a hyperbolic or relatively hyperbolic group has  infinite order in $Out(G)$. We describe main steps of the proof in this case ( see \cite{FelPOMI,ll}  for the  details).
By  \cite{p} and \cite{bsz} $\Phi$ preserves some $R$-tree $T$
with nontrivial minimal small action of $G$( recall that an action of $G$ is small if all ars stabilisers are virtually cyclic; the action of $G$ on $T$
is always irreducible( no global fixed point, no invariant line, no invariant end)). This means that there is an $R$-tree  $T$ equipped with
an isometric action of $G$ whose length function satisfies
$l\cdot \Phi=\lambda l$ for some $\lambda \geq 1$.\\
Step 1. Suppose $ \lambda =1$.  Then the Reidemeister number $R(\phi)$  is infinite .\\
Step 2. Suppose $\lambda >1$. Assume that arc stabilisers are finite, and
there exists $N_0\in N$ such that, for every $Q\in T$, the action
of $Stab Q$ on $\pi_o(T-{Q})$ has at most $N_0$ orbits.
Then  the Reidemeister number $R(\phi)$  is infinite.\\
Step 3. If $\lambda >1$, then $T$ has finite arc stabilisers.
If $\lambda >1$ then from work of Bestwina-Feighn \cite{bf} it follows
that there exists $N_0\in N$ such that, for every $ Q\in T$, the action
of $Stab Q$ on $\pi_o(T-(Q))$ has at most $N_0$ orbits.

(3)  Now let  $S$ be an oriented, compact surface of  genus $g$ and with
  $p$
boundary components, where  $3g+p-4 >0$. It is easy to see that the
 mapping Class Group
$Mod_S$ is a normal subgroup of  the full mapping class group $
 Mod_S^*$, of  index  $2$.
The graph of curves of $S$,  denoted $\mathcal{G}(S)$,  is the
graph
whose vertices are the simple curves of $S$ modulo isotopy. Two
vertices (that is two isotopy classes of simple curves) are linked by
an
edge in this graph if they can be realized by disjoint curves. Both
$Mod_S$ and  $ Mod_S^*$ act on $\mathcal{G}(S)$ in a
non-elementary way.
Now we  use the non-elementary
result of Masur and Minsky
\cite{MM} (see also Bowditch \cite{B}) that the complex of curves of an
oriented surface (with genus $g$ and $p$ boundary components, and
$3g+p-4
>0$) is Gromov-hyperbolic space.

Thus Theorem \ref{dam}
is applicable  for $Mod_S$ and
 for $\varphi_1$  the automorphism induced by reversing the
orientation of $S$, since in this  case,
$({Mod_S})_{\varphi_1}={{Mod_S}\rtimes}_{\varphi_1} \Z_2 \simeq
 Mod_S^*$.
For $Mod_S$ and
$\varphi_0= Id$, we have $R(\varphi_0= Id)=\infty$ because the group
  $Mod_S$
has infinite an number of usual conjugacy classes.
Finally,
 $Out(Mod_S) \simeq \{\overline{\varphi_0}, \overline{\varphi_1}
\}$ (see  \cite{iva}),
which   ensures   that $Mod_S$ has the  $R_\infty$
property if  $S$ is  an orientable,  compact surface of  genus $g$ with
  $p$
boundary components, where  $3g+p-4 >0$. The only cases not covered by
 this inequality are:
i) S is the torus with at most one hole  and  ii) S is  the sphere with
 at most 4
 holes. The case of the torus with at most one hole  follows from
the  section 3 in \cite{dfg}. In the case of the  sphere with at most 4 holes, this
  follows directly
 from the knowledge of Out(ModS)  and the cardinality of the mapping
 class group.

(4) Let $\phi: B_n(S^2) \to B_n(S^2)$ be an automorphism.
 Since the center of   $B_n(S^2)$  is a characteristic subgroup, $\phi$
 induces a homomorphism of the short  exact sequence

$$1\to \mathbb Z_2 \to  B_{n}(S^2)\to Mod_{S^2_n} \to 1,$$
\noindent
where $S^2_r=S^2-r$ open disks. This   short exact sequence was   obtained from the sequence
 in \cite{Bi}. The result above  implies 
 that  the group $Mod_{S^2_n}$ for  $n>3$,  has
the $R_{\infty}$ property. Then the remark  about extensions
 implies that  the group  $B_{n}(S^2)$ also has this property.
 For $n\leq 3$ the groups $B_n(S^2)$ are finite so they do not have the
 $R_{\infty}$ property.

\end{proof}

Relatively hyperbolic groups were introduced by Gromov\cite{gromov} and since
then various characterizations of relatively hyperbolic groups have been obtained.
Examples of relatively hyperbolic groups are:
\begin{itemize}
\item the free products of finitely many finitely generated
groups are hyperbolic relative to the factors.

\item
geometrically finite isometry groups of Hadamard
manifolds of negatively pinched sectional curvature
 are hyperbolic relative to
the maximal parabolic subgroups. This includes complete
finite volume manifolds of negatively pinched 
sectional curvature.

\item
The amalgamation of relatively hyperbolic groups over
parabolic subgroups
is relatively hyperbolic, when the parabolic subgroup
is maximal in at least one of the factors \cite{Dah-comb, Osi-comb}
\item
 $\mathrm{CAT}(0)$-groups with isolated flats
are hyperbolic relative to the flat stabilizers. Examples of
$\mathrm{CAT}(0)$-groups with isolated flats are listed 
in~\cite{Hru-Kle}.
\item Sela's limit groups  are hyperbolic relative to non-cyclic
maximal abelian subgroups \cite{Dah-comb}
\end{itemize}

\subsection{Asymptotic expansions} \label{asympt}

Suppose  we know that the number of the twisted conjugacy classes of a automorphism $\phi$ of a  group $G$ is infinite. The next natural step will be to write an asymptotic
for the number of twisted conjugacy classes with a  norm smaller then $x$.
First of all  we need to define a norm of a twisted conjugacy class. In present  section
we realise this approach for pseudo-Anosov homeomorphism of a compact surface.

We assume $X$ to be a compact surface of negative Euler characteristic and
 $f:X\rightarrow X$ is a pseudo-Anosov homeomorphism, i.e. there is a number $\lambda >1$
and a pair of transverse measured foliations $(F^s,\mu^s)$ and $(F^u,\mu^u)$
such that $f(F^s,\mu^s)=(F^s,\frac{1}{\lambda}\mu^s)$ and $f(F^u,\mu^u)=(F^u,\lambda\mu^u)$.
The mapping torus $T_f$ of $f:X\rightarrow X$ is the space obtained from $X\times [0 ,1]$
 by identifying $(x,1)$ with $(f(x),0)$ for all $x\in X$. It is often more convenient to regard $T_f$
as the space obtained from $X\times [0,\infty )$ by identifying $(x,s+1)$ with $(f(x),s)$ for
all $x\in X,s\in [0 ,\infty )$. On $T_f$ there is a natural semi-flow
$\phi :T_f\times [0,\infty )\rightarrow T_f, \phi_t(x,s)=(x,s+t)$ for all $t\geq 0$.
Then the map  $f:X\rightarrow X$ is the return map of the semi-flow $\phi $.
A point $x\in X$ and a positive number $\tau >0$ determine the orbit
curve $\phi _{(x,\tau )}:={\phi_t(x)}_{0\leq t \leq \tau}$ in $T_f$.
The fixed points and periodic points of $f$ then correspond to closed orbits of various periods.
Take the base point $x_0$ of $X$ as the base point of $T_f$. According to van Kampen's Theorem,
the fundamental group $G :=\pi_ 1(T_f,x_0)$ is obtained from $\pi $ by adding a new
generator $z$ and adding the relations $z^gz^{-1}=\tilde f_*(g)$ for all $g\in \pi =\pi _1(X,x_0)$,
where $z$ is the generator of $\pi_1(S^1,x_0)$.
This means that $G$ is a semi-direct product $G=\pi \rtimes  Z$ of $\pi$ with $Z$.

There is a canonical projection
 $\tau : T_f \to R/Z$ given by
 $(x,s)\mapsto s$.
This induces a map
 $\pi_1(\tau):G=\pi_1(T_f,x_0)\to Z$.

 The Reidemeister number $R(f)$
 is equal to the number of homotopy
 classes of closed paths $\gamma$ in $T_f$
 whose projections onto $R/Z$
 are homotopic to the path
 $$
\begin{array}{cccl}
 \sigma :&[0,1] & \to     & $ $ \!\!\! R/Z \\
         &  s   & \mapsto & s.
\end{array}
 $$

Corresponding to this,  there is a group-theoretical interpretation of
 $R(f)$ as the number of
 usual conjugacy classes of elements $\gamma\in\pi_1(T_f)$
 satisfying $\pi_1(\tau)(\gamma) = z$.

 \begin{lemma}\cite{th, otal}\label{th}
The interior of  the mapping torus $ Int(T_f)$ admits a hyperbolic structure of finite volume if and only if
$f$ is isotopic to a pseudo-Anosov homeomorphism.
\end{lemma}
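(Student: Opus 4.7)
The plan is to prove the two implications separately, using the Nielsen-Thurston classification on one side and Thurston's hyperbolization theorem for fibered 3-manifolds on the other.

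For the direction \emph{pseudo-Anosov implies hyperbolic}, I would invoke Thurston's hyperbolization theorem for surface bundles over the circle: if $f:X\to X$ is a pseudo-Anosov homeomorphism of a compact surface of negative Euler characteristic, then the mapping torus $T_f$ is atoroidal and Haken, so its interior admits a complete hyperbolic structure of finite volume. The key geometric input is Thurston's double limit theorem, which produces a fixed point for the action of $f_*$ on the space of quasi-Fuchsian structures on the interior of $X$; Otal's book gives a clean self-contained proof of this step. Atoroidality must be checked by hand: any essential torus in $T_f$ would, after isotopy into general position with respect to the fibration, project to an invariant curve system for $f$, contradicting the pseudo-Anosov property (no essential simple closed curve on $X$ has a power fixed by $f$ up to isotopy).

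For the converse direction \emph{hyperbolic implies pseudo-Anosov}, I would apply the Nielsen-Thurston classification: $f$ is isotopic to either a periodic, a reducible, or a pseudo-Anosov homeomorphism. One needs to exclude the first two cases when $\mathrm{Int}(T_f)$ is hyperbolic of finite volume. If $f$ were periodic of order $m$, then $T_f$ would be finitely covered by $X\times S^1$; since $X$ has boundary components or, in the closed case, a nontrivial $\pi_1$, the subgroup $\pi_1(\partial X)\times \pi_1(S^1)$ (resp.\ any $\Z^2$ coming from $\pi_1(X)\times\pi_1(S^1)$) yields a $\Z\oplus\Z$ subgroup of $\pi_1(T_f)$, which is impossible in a finite-volume hyperbolic 3-manifold group (cusps give only rank-two parabolic subgroups, and $T_f$ fibers, so such a $\Z^2$ would force a toral component that is not a cusp cross-section). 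If $f$ were reducible with invariant curve system $C\subset X$, the mapping torus of $C$ would be an essential torus or Klein bottle in $T_f$, contradicting atoroidality of finite-volume hyperbolic 3-manifolds. Hence $f$ must be pseudo-Anosov.

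The main obstacle is the hyperbolization direction: the existence of the hyperbolic structure is not elementary and requires the full strength of Thurston's fibered case, i.e.\ the double limit theorem and the construction of the limit $\mathrm{PSL}(2,\mathbb{C})$-representation invariant under $f_*$. Everything else — verifying atoroidality, ruling out periodic and reducible monodromies via the algebraic structure of $\pi_1(T_f)$ — is routine once one has the Nielsen-Thurston trichotomy and standard facts about cusped hyperbolic 3-manifolds.
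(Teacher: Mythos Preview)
The paper does not give a proof of this lemma; it is quoted as a known theorem and attributed to Thurston's lecture notes and Otal's monograph. Your outline is a faithful sketch of the argument one finds in those references: Thurston's hyperbolization for surface bundles (via the double limit theorem) supplies the forward direction, and the Nielsen--Thurston trichotomy together with the topology of the mapping torus rules out the periodic and reducible cases for the converse.

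One small tightening in the periodic case: rather than tracking $\Z\oplus\Z$ subgroups and cusp structure, it is cleaner to observe that if $f$ has finite order $m$ then $T_{f^m}\cong X\times S^1$ is an $m$-fold cover of $T_f$, so $T_f$ is Seifert fibered; since $\chi(X)<0$, a Seifert-fibered piece over a hyperbolic base orbifold cannot be hyperbolic. Your reducible case (invariant curve system yields an essential torus, contradicting atoroidality of a finite-volume hyperbolic $3$-manifold) is exactly the standard argument.
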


So, if the surface $X$ is closed and $f$ is isotopic to a pseudo-Anosov homeomorphism,
the mapping torus $T_f$ can be realised as a hyperbolic 3-manifold ,  $H^3/G$, where $H^3$
is the Poincare upper half space $\{(x,y,z): z  > 0, (x,y) \in R^2\}$ with the metric
$ds^2=(dx^2 +dy^2 + dz^2)/z^2$. 
The closed geodesics on a hyperbolic manifold are in one-to-one correspondence with
the free homotopy classes of loops. These classes of loops are in one-to-one correspondence with the
conjugacy classes of loxodromic elements in the fundamental group of the hyperbolic manifold.
This correspondence allowed  Ch. Epstein (see \cite{eps}, p.127) to study the asymptotics of such functions 
as   $p_n(x)$= \#\{primitive closed geodesics of length less than $x$ represented by an element of the
form $gz^n$\}  using the Selberg trace formula. A primitive closed geodesic is one which is not
an iterate of another closed geodesic. Later, Phillips and Sarnak \cite{ps} generalised results of Epstein 
 and obtained  for $n$-dimensional hyperbolic manifold the asymptotic of the number
of primitive closed geodesics of length at most $x$ lying in fixed homology class. 
The proof of this result makes  routine use of the Selberg trace formula.  In the more general case
of variable negative curvature, such asymptotics were obtained by Pollicott and Sharp \cite{pos}.
They used  a dynamical approach based on the geodesic flow.
We will only need an asymptotic   for $p_1(x)$.  Note  that closed geodesics represented by an
element of the form $gz$ are  automatically primitive, because they wrap
exactly once around the mapping torus (once around  the generator $z$). 
We have following asymptotic expansion \cite{eps, ps, pos}
\begin{equation}
p_1(x)=  \frac{e^{hx}}{x^{3/2}}( \sum_{n=0}^{N}\frac{C_n}{x^{n/2}} + o(\frac{1}{x^{N/2}}) ),
\end{equation}
for any $N>0$,
where $h=\dim T_f  - 1=2$ is the topological entropy of the geodesic flow on the unit-tangent bundle
$ST_f$, and the constant  $C_0 > 0$ depends on the volume of  hyperbolic 3-manifold $T_f$.
N. Anantharaman \cite{an} has shown that  the constants $C_n$ vanish if $n$ is odd.
So, we have the following  leading asymptotic behaviour:
\begin{equation}
p_1(x) \sim  C_0 \frac{e^{hx}}{x^{3/2}},  \mbox {as}  x \rightarrow \infty
\end{equation}

Notation. We write $f(x)\sim g(x)$  if  $\frac{f(x)}{g(x)}\rightarrow 1 $  as  $ x \rightarrow \infty $.

Now, using  the one-to-one correspondences  in Lemma \ref{lem:HillRanbij}  and   Reidemeister bijection in Itroduction
we  define  the  norm  of the 
lifting class, or of the corresponding  twisted
conjugacy class  $\{g\}_{\tilde f_*}$ in the fundamental group
of the surface $\pi =\pi _1(X, x_0)$,  as the length of the primitive closed geodesic  $\gamma $  on  $T_f$,
 which is represented by an element of the form $gz$. So, for example, the norm function $l^*$
on the set of twisted  conjugacy classes equals  $l^*=l\circ B$,
where $l$ is  length function on geodesics ($l(\gamma)$ is the length of the
primitive closed geodesic  $\gamma$) and a norm map $B$ is a bijection defined by formula $B(g)= gz$
between  the fundamental group
of  the surface $\pi =\pi _1(X, x_0)$  and the first coset $\pi _1(X, x_0)z$  in the fundamental group $G :=\pi_ 1(T_f,x_0)=\pi \rtimes  Z$.
\begin{lemma}
1) $ B(\gamma g \tilde f_*((\gamma)^{-1})=\gamma B(g)\gamma^{-1}$ 

2)  Function $l^*=l\circ B$  is a  non negative  twisted class function on G
\end{lemma}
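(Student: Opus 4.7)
The plan is to verify part (1) by a one-line computation inside the semi-direct product $G = \pi \rtimes \mathbb{Z}$, using only the defining relation $zgz^{-1} = \tilde f_*(g)$ for all $g \in \pi$. Then part (2) will follow at once from part (1), combined with the fact that the hyperbolic length of a primitive closed geodesic in $T_f = H^3/G$ depends only on the ordinary conjugacy class in $G$ of a representing loxodromic element.

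For part (1), unfolding the right-hand side gives
\[
\gamma B(g)\gamma^{-1} = \gamma g z \gamma^{-1}.
\]
The defining relation of $G$, rewritten as $z\gamma^{-1} = \tilde f_*(\gamma^{-1})\,z$, allows one to push $z$ past $\gamma^{-1}$ and obtain
\[
\gamma g z \gamma^{-1} = \gamma g\,\tilde f_*(\gamma^{-1})\,z = B\bigl(\gamma g\,\tilde f_*(\gamma^{-1})\bigr),
\]
which is exactly the left-hand side. This is really just the element-level incarnation of the bijection in Lemma \ref{lem:HillRanbij}: twisted conjugation by $\gamma$ in $\pi$ corresponds, after multiplying by the coset representative $z$, to ordinary conjugation by $\gamma$ in $G$.

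For part (2), non-negativity of $l^* = l \circ B$ is immediate, since $l$ is a geodesic length. To see that $l^*$ is a twisted class function, take two $\tilde f_*$-conjugate elements $g, g' \in \pi$, say $g' = \gamma g\,\tilde f_*(\gamma^{-1})$. By part (1), $B(g') = \gamma B(g)\gamma^{-1}$, so $B(g)$ and $B(g')$ are ordinarily conjugate in $G$. The two elements therefore determine the same free homotopy class of loops in $T_f$, hence the same primitive closed geodesic, and in particular $l(B(g')) = l(B(g))$.

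The only conceptual point that needs attention is that $l(B(g))$ is well defined for every $g \in \pi$, i.e.\ that every element of the coset $\pi z$ is loxodromic as a deck transformation of $H^3$. This is guaranteed by Lemma \ref{th}: since $T_f$ admits a finite-volume hyperbolic structure, any element of $G$ mapping nontrivially to $\mathbb{Z}$ under $\pi_1(\tau)$ cannot fix a cusp or be elliptic, and so must be loxodromic. Apart from this one remark, the proof is formal and the main obstacle is merely bookkeeping of the semi-direct product relations.
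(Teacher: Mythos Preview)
Your proof is correct and follows essentially the same route as the paper: both verify (1) by the single relation $z\gamma^{-1}=\tilde f_*(\gamma^{-1})z$ (the paper starts from the left side and writes $\tilde f_*(\gamma^{-1})z=z\gamma^{-1}z^{-1}z=z\gamma^{-1}$, you start from the right), and both deduce (2) immediately from (1) together with the conjugacy-invariance of geodesic length. Your additional remark on why every element of $\pi z$ is loxodromic is a useful clarification that the paper omits; note, though, that in the setting at hand $X$ is closed, so $T_f$ is a closed hyperbolic $3$-manifold with no cusps, and loxodromicity of all nontrivial elements is automatic.
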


\begin{proof} 
1)  $ B(\gamma g \tilde f_*((\gamma)^{-1})=\gamma g \tilde f_*(\gamma^{-1})z =\gamma g z\gamma^{-1} z^{-1}z=\gamma g z\gamma^{-1}=\gamma B(g)\gamma^{-1}$

2)  $l^*(\gamma g \tilde f_*((\gamma)^{-1})=l\circ B(\gamma g \tilde f_*((\gamma)^{-1})=l(\gamma B(g)\gamma^{-1})=l(B(g))= l^*(g)$
\end{proof}

In \cite{felas2} we introduced  the following  counting functions:\\
Tw($x$)= \#\{twisted conjugacy  classes for  $\tilde f_*$  in the fundamental group of surface  of norm  less than $x$\},\\
L($x$)= \# \{lifting  classes of $f$ of norm  less than $x$\}.
A norm function on the set of liftings of $f$ equals
  $l^{**}=l^*\circ L$, where $l^*$ is defined as above and $L( \tilde f^{\prime} =\alpha\circ\tilde f)=\alpha$ is a  function from the set of liftings of $f$ to the group of their coordinates
$\pi =\pi _1(X, x_0)$  (see the introduction). The function $l^{**}$ is a class
function on the set of liftings,  constant on the liftings classes.

\begin{theorem}\cite{felas2}
Let $X$ be a closed surface of negative Euler characteristic and let $f:X\rightarrow X$
be a pseudo-Anosov homeomorphism. Then
$$ L(x)=Tw(x)= 
 \frac{e^{2x}}{x^{3/2}}( \sum_{n=0}^{N}\frac{C_n}{x^{n/2}}  + o(\frac{1}{x^{N/2}}) ),$$
where the constant  $C_0 > 0$
depends on the volume of  the  hyperbolic 3-manifold  $T_f$, and the
constants $C_n$ vanish if $n$ is odd.
\end{theorem}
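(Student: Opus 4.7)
The plan is to reduce the count of lifting/twisted classes of norm $\leq x$ to Epstein's counting function $p_1(x)$ for primitive closed geodesics on the hyperbolic $3$-manifold $T_f$, and then quote the known asymptotic expansion.

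First I would set up the hyperbolic structure. Since $X$ is a closed surface of negative Euler characteristic and $f$ is pseudo-Anosov, Lemma \ref{th} (Thurston–Otal) gives $T_f\cong \mathbb H^3/G$ as a finite volume hyperbolic $3$-manifold, where $G=\pi_1(T_f)=\pi\rtimes\mathbb Z$ with generator $z$ satisfying $zgz^{-1}=\tilde f_*(g)$. Every free homotopy class of loops in $T_f$ contains a unique closed geodesic, and these are in bijection with conjugacy classes of loxodromic elements of $G$ (all nontrivial elements here are loxodromic since $T_f$ is closed/finite-volume hyperbolic).

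Next I would translate the two counting functions into one. By the Reidemeister bijection recalled in the Introduction, lifting classes of $f$ are in bijection with $\tilde f_*$-conjugacy classes in $\pi$, so $L(x)=\mathrm{Tw}(x)$ follows once we check that the chosen norms agree (which they do by construction: $l^{**}=l^*\circ L$ and $l^*=l\circ B$). Then by Lemma \ref{lem:HillRanbij}, applied with $t$ replaced by $z$, the map $B:g\mapsto gz$ identifies $\tilde f_*$-conjugacy classes in $\pi$ with ordinary conjugacy classes of $G$ contained in the coset $\pi\cdot z$. Thus
\[
\mathrm{Tw}(x)=\#\{\text{conjugacy classes }[\gamma]\subset \pi\cdot z \text{ with }\ell(\gamma)< x\},
\]
where $\ell(\gamma)$ is the length of the unique closed geodesic in the free homotopy class of $\gamma$.

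Now I would observe primitivity. An element of the form $gz$ maps under $\pi_1(\tau):G\to\mathbb Z$ to the generator $1$; hence the corresponding closed geodesic wraps exactly once around the fiber direction of $T_f$. If $gz=(hz^k)^m$ for some $m\geq 2$, then applying $\pi_1(\tau)$ gives $1=km$, which forces $m=1$. Therefore every class in $\pi\cdot z$ is represented by a \emph{primitive} loxodromic element, and $\mathrm{Tw}(x)$ coincides exactly with Epstein's counting function
\[
p_1(x)=\#\{\text{primitive closed geodesics of length }<x\text{ represented by some }gz\}.
\]

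Finally I would invoke the asymptotic expansion recalled in Section~\ref{asympt}. The Selberg trace formula analysis of Epstein \cite{eps}, together with its refinements by Phillips–Sarnak \cite{ps} and Pollicott–Sharp \cite{pos}, yields
\[
p_1(x)=\frac{e^{hx}}{x^{3/2}}\Bigl(\sum_{n=0}^{N}\frac{C_n}{x^{n/2}}+o(x^{-N/2})\Bigr),
\]
with $h=\dim T_f-1=2$ and leading coefficient $C_0>0$ depending on $\mathrm{vol}(T_f)$. The vanishing of $C_n$ for odd $n$ is the result of Anantharaman \cite{an}. Substituting this into the identity $L(x)=\mathrm{Tw}(x)=p_1(x)$ established above completes the proof.

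The main obstacle is the primitivity and coset identification step: one must argue cleanly that the norm on $\tilde f_*$-twisted conjugacy classes of $\pi$ defined via $B$ corresponds precisely to the geodesic length of a \emph{primitive} closed geodesic in $T_f$ projecting once onto $S^1$. Once this is in place, everything else reduces to quoting the classical hyperbolic prime geodesic theorem in a fixed homology/coset class.
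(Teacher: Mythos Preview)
Your proposal is correct and follows essentially the same route as the paper's proof, which consists of a single sentence invoking the Reidemeister bijection, Lemma~\ref{th}, Lemma~\ref{lem:HillRanbij}, and the asymptotic expansion~(3.1). You have in fact supplied more detail than the paper does, including the explicit primitivity argument (which the paper mentions in passing just before the theorem) and the verification that the norms on lifting classes and twisted conjugacy classes agree.
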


 Proof.
The proof follows from Reidemeister bijection,  Lemma \ref{th},  Lemma \ref{lem:HillRanbij} and   the asymptotic expansion (3.1).
\begin{corollary}
For pseudo-Anosov homeomorphisms  of  closed surfaces  the Reidemeister
number is infinite.
\end{corollary}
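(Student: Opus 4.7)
The plan is to deduce the corollary directly from the asymptotic expansion in the preceding theorem, which already does all the analytic work. I would argue as follows.

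First I would observe that the preceding theorem states
\[
\mathrm{Tw}(x) \;=\; \frac{e^{2x}}{x^{3/2}}\left( \sum_{n=0}^{N}\frac{C_n}{x^{n/2}} + o\!\left(\tfrac{1}{x^{N/2}}\right)\right),
\]
with $C_0>0$ determined by the hyperbolic volume of the mapping torus $T_f$ (which is finite and positive by Lemma \ref{th}, since $f$ is pseudo-Anosov on a closed surface of negative Euler characteristic). In particular, taking any $N \geq 0$ gives $\mathrm{Tw}(x)\sim C_0 e^{2x}/x^{3/2}$, so $\mathrm{Tw}(x)\to \infty$ as $x\to\infty$.

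Next I would recall the meaning of $\mathrm{Tw}(x)$: by construction it counts the $\tilde f_*$-twisted conjugacy classes in $\pi=\pi_1(X,x_0)$ whose norm $l^*$ (the length of the associated primitive closed geodesic $\gamma$ in $T_f$ given by the bijection $B$) is less than $x$. Since $\mathrm{Tw}(x)$ is nondecreasing in $x$ and tends to infinity, the total set of $\tilde f_*$-twisted conjugacy classes in $\pi$ cannot be finite: if it were finite, each class would have a finite norm, so the supremum of all norms over the (finitely many) classes would be some $M<\infty$, giving $\mathrm{Tw}(x)$ constant for $x>M$, contradicting the growth above.

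Finally I would invoke the Reidemeister bijection recalled in the introduction: lifting classes of $f$ are in one-to-one correspondence with $\tilde f_*$-conjugacy classes in $\pi$, and by definition $R(f)$ equals the number of lifting classes, hence the number of $\tilde f_*$-twisted conjugacy classes. Therefore $R(f)=\infty$. No further obstacle arises; the entire content of the corollary is packaged into the previous asymptotic theorem, and the present statement is really a qualitative corollary of that quantitative one. If anything required care, it would merely be the observation that the counting function is monotone and thus unboundedness of $\mathrm{Tw}(x)$ forces infinitely many classes, which is immediate.
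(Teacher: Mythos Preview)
Your argument is correct and is exactly the deduction the paper intends: the corollary is stated immediately after the asymptotic theorem with no separate proof, so it is meant to follow from the fact that $\mathrm{Tw}(x)\to\infty$ forces infinitely many $\tilde f_*$-conjugacy classes, and hence $R(f)=\infty$ via the Reidemeister bijection. You have simply made explicit the one-line reasoning the paper leaves implicit.
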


\begin{que}
How can one define the norm of a  twisted conjugacy class
in the general case?

\end{que}

\section{Nielsen fixed point theory and symplectic Floer homology} \label{sec:floer}
In the  dimension two a diffeomorphism is symplectic if it preserves
area. As  a consequence, the symplectic geometry of surfaces lacks many of the interesting phenomena which are encountered in higher dimensions. For  example, two symplectic automorphisms of a closed surface are symplectically isotopic iff they are homotopic, by a theorem of Moser\cite{Mo}.
On other hand symplectic fixed point theory is very nontrivial in dimension 2,
as it is  shown by the Poincare-Birkhoff theorem.
It is known that symplectic Floer homology on surface
is a simple model for the instanton Floer homology of the
  mapping torus of the surface diffeomorphism \cite{S}.

\subsection{ Symplectic Floer homology}

\subsubsection{Monotonicity}

In this section we discuss the notion of monotonicity as defined in \cite{S,G}.
Monotonicity plays important role for  Floer homology in two
dimensions.
Throughout this article, $M$ denotes a closed connected and oriented
2-manifold of genus $\geq2$.   Pick an everywhere positive two-form $\omega$
on $M$.

Let $\phi\in\symp(M,\omega)$, the group of  symplectic automorphisms
 of the two-dimensional symplectic
manifold $(M,\omega)$.
The mapping torus of $\phi$,  $T_\phi = \R\times M/(t+1,x)\sim(t,\phi(x)),$
is a 3-manifold fibered over $S^1=\R/\Z$.
There are two natural second
cohomology classes on $T_\phi$, denoted by $[\ophi]$ and $c_\phi$. The first one is
represented by the closed two-form $\ophi$ which is induced from the pullback
of $\omega$ to $\R\times M$. The second is the Euler class of the
vector bundle
$
V_\phi =
\R\times T M/(t+1,\xi_x)\sim(t,\de\phi_x\xi_x),
$
which is of rank 2 and inherits an orientation from $TM$.

$\phi\in\symp(M,\omega)$ is called {\bf monotone}, if
$
[\omega_\phi] = (\area_\omega(M)/\chi(M))\cdot c_\phi
$
in $H^2(T_\phi;\R)$; throughout this article
$\symp^m(M,\omega)$ denotes the set of
monotone symplectomorphisms.

Now $H^2(T_\phi;\R)$ fits into the following short exact sequence \cite{S,G}
\begin{equation}\label{eq:cohomology}
0 \To  \frac{H^1(M;\R)}{\im(\id-\phi^*)}
\stackrel{d}{\To} H^2(T_\phi;\R)
\stackrel{r^*}{\To} H^2(M;\R),
\To 0.
\end{equation}
where the map $r^*$  is restriction to the fiber.
The map $d$ is defined as follows.
Let $\rho:I\to\R$ be a smooth function which vanishes near $0$ and $1$ and
satisfies $\int_0^1\!\rho\,\de t=1$.
If $\theta$ is a closed 1-form on $M$, then
$\rho\cdot\theta\wedge\de t$ defines a closed 2-form on $T_\phi$; indeed
$
d[\theta] = [\rho\cdot\theta\wedge\de t].
$
The map $r:M\inclusion T_\phi$ assigns to each $x\in M$ the
equivalence class of $(1/2,x)$.
Note, that $r^*\ophi=\omega$ and $r^*c_\phi$ is the Euler class of
$TM$.
Hence, by \eqref{eq:cohomology}, there exists a unique class
$m(\phi)\in H^1(M;\R)/\im(\id-\phi^*)$ satisfying
$
d\,m(\phi) = [\ophi]-(\area_\omega(M)/\chi(M))\cdot c_\phi,
$
where $\chi$ denotes the Euler characteristic.
Therefore, $\phi$ is monotone if and only if $m(\phi)=0$.
\smallskip\\
We recall the fundamental properties of $\symp^m(M,\omega)$ from \cite{S,G}.
Let  $\diff^+(M)$ denotes  the group of orientation preserving
diffeomorphisms of $M$.

(Identity) $\id_M \in \symp^m(M,\omega)$.

(Naturality)
If $\phi\in\symp^m(M,\omega),\psi\in\diff^+(M)$, then
$\psi^{-1}\phi\psi\in\symp^m(M,\psi^*\omega)$.

(Isotopy)
Let $(\psi_t)_{t\in I}$ be an isotopy in $\symp(M,\omega)$, i.e. a smooth
path with $\psi_0=\id$.
Then
$$
m(\phi\circ\psi_1)=m(\phi)+[\flux(\psi_t)_{t\in I}]
$$
in $H^1(M;\R)/\im(\id-\phi^*)$; see \cite[Lemma 6]{S}.
For the definition of the flux homomorphism see \cite{MS}.

(Inclusion)
The inclusion $\symp^m(M,\omega)\inclusion\diff^+(M)$ is a homotopy
equivalence.
\smallskip\\
(Floer homology)
To every $\phi\in\symp^m(M,\omega)$ symplectic Floer homology
theory assigns a $\Z_2$-graded vector space $\HF(\phi)$ over $\Z_2$, with an
additional multiplicative structure, called the quantum cap product,
$
H^*(M;\Z_2)\otimes\HF(\phi)\To\HF(\phi).
$
For $\phi=\id_M$ the symplectic Floer homology $\HF(\id_M)$ are  canonically isomorphic to ordinary homology  $H_*(M;\Z_2)$ and quantum cap product agrees with the ordinary cap product.
Each $\psi\in\diff^+(M)$ induces an isomorphism
$\HF(\phi)\cong\HF(\psi^{-1}\phi\psi)$ of $H^*(M;\Z_2)$-modules.

(Invariance)
If $\phi,\phi'\in\symp^m(M,\omega)$ are isotopic, then
$\HF(\phi)$ and $\HF(\phi')$ are naturally isomorphic as
$H^*(M;\Z_2)$-modules.

This is proven in \cite[Page 7]{S}. 
Note that every Hamiltonian perturbation
of $\phi$ (see \cite{ds}) is also in $\symp^m(M,\omega)$.
\smallskip\\
Now let $g$ be a mapping class of $M$, i.e. an isotopy class of $\diff^+(M)$.
Pick an area form $\omega$ and a
representative $\phi\in\symp^m(M,\omega)$ of $g$.
Then $\HF(\phi)$ is an invariant of $g$, which is
denoted by $\HF(g)$. Note that $\HF(g)$ is independent of the choice of an area
form $\omega$ by Moser's isotopy theorem \cite{Mo} and naturality of Floer homology.

\subsubsection{Floer homology}

Let $\phi\in\symp(M,\omega)$.There are two ways of constructing Floer
homology detecting its fixed points, $\Fix(\phi)$. Firstly, the graph of $\phi$
is a Lagrangian submanifold of $M\times M,(-\omega)\times\omega)$ and its fixed points correspond to the intersection points of graph($\phi$) with the
diagonal $\Delta=\{(x,x)\in M\times M\}$. Thus we have the Floer homology of the Lagrangian intersection $\HF(M\times M,\Delta, graph (\phi))$.
This intersection is transversal if the fixed points of $\phi$ are nondegenerate, i.e. if 1 is not an eigenvalue of $d\phi(x)$, for $x\in\Fix(\phi)$.
The second approach was mentioned by Floer in \cite{Floer1} and presented with
details by Dostoglou and Salamon  in \cite{ds}.We follow here  Seidel's approach \cite{S} which, comparable with \cite {ds}, uses a larger
class of perturbations, but such that the perturbed action form is still
cohomologous to the unperturbed. As a consequence, the usual invariance of
Floer homology under Hamiltonian isotopies is extended to the stronger
property stated above.
Let now  $\phi\in\symp^m(M,\omega)$, i.e  $\phi$ is  monotone.
Firstly, we give  the definition of $\HF(\phi)$  in the special case
where all the fixed points of $\phi$ are non-degenerate, i.e. for all $y\in\fix(\phi)$, $\det(\id-\de\phi_y)\ne0$, and then
following Seidel´s approach  \cite{S} we consider general case when
$\phi$ has degenerate fixed points.
 Let $\Of = \{ y \in C^{\infty}(\R,M)\,|\, y(t) = \phi(y(t+1)) \}$ be the twisted free loop space, which is also
the space of sections of $T_\phi \rightarrow S^1$. The action form is the
closed one-form $\alpha_\phi$ on $\Of$ defined by
$
\alpha_\phi(y) Y = \int_0^1 \omega(dy/dt,Y(t))\,dt,
$
where $y\in\Of$ and $Y\in T_y\Of$,
 i.e. $Y(t)\in T_{y(t)}M$
 and
$Y(t)=\de\phi_{y(t+1)}Y(t+1)$ for all  $t\in\R$.

The tangent bundle of any symplectic manifold admits an almost complex structure $ J:TM\To TM$ which is compatible with $\omega$ in sense that $(v,w)=\omega(v,Jw)$ defines a Riemannian metric.
 Let $J=(J_t)_{t \in \R}$
be a smooth path of $\omega$-compatible almost  complex structures on
$M$ such that $J_{t+1}=\phi^*J_t$.
If $Y,Y'\in T_y\Of$, then
$\int_0^1\omega(Y'(t),J_t Y(t))\de t$ defines a metric
on the loop space $\Of$. So  the critical points of $\alpha_\omega$ are the constant paths in  $\Of$ and  hence the fixed points of $\phi$. The negative gradient lines of $\alpha_\omega$ with respect to the
metric above  are solutions of the partial differential equations
with boundary conditions
\begin{equation}\label{eq:corbit}
\left\{\begin{array}{l}
u(s,t) = \phi(u(s,t+1)), \\
\p_s u + J_t(u)\p_t u = 0, \\
\lim_{s\to\pm\infty}u(s,t) \in \Fix(\phi)
\end{array}\right.
\end{equation}
These are exactly  Gromov's pseudoholomorphic  curves \cite{Gromov}.

For $y^\pm\in\fix(\phi)$, let $\M(y^-,y^+;J,\phi)$ denote the space of smooth maps $u:\R^2\to M$ which satisfy the  equations \eqref{eq:corbit}.
Now to every $u\in\M(y^-,y^+;J,\phi)$ we  associate a Fredholm operator
$\De_u$ which linearizes (\ref{eq:corbit}) in suitable Sobolev spaces. The
index of this operator is given by the so called Maslov index $\mu(u)$,
which satisfies $\mu(u)=\deg(y^+)-\deg(y^-)\text{ mod }2$, where  $(-1)^{\deg y}=\sign(\det(\id-\de\phi_y))$. We have no bubbling, since for surface
$\pi_2(M)=0$.

 For a generic
$J$, every $u\in\M(y^-,y^+;J,\phi)$ is regular, meaning that $\De_u$ is onto.
Hence, by the implicit function theorem, $\M_k(y^-,y^+;J,\phi)$ is
a smooth $k$-dimensional manifold, where $\M_k(y^-,y^+;J,\phi)$ denotes the
subset of 
those $u\in\M(y^-,y^+;J,\phi)$ with $\mu(u)=k\in\Z$.
Translation of the $s$-variable defines a free $\R$-action on 1-dimensional
manifold $\M_1(y^-,y^+;J,\phi)$ and hence the quotient is a discrete set of points.

 The energy of a map $u:\R^2\to M$ is given by 
$$
E(u) = \int_{\R}\int_0^1 \omega\big(\p_tu(s,t),J_t\p_tu(s,t)\big)\,\de t\de s
$$
  for all $y\in\fix\phi$.  P.Seidel has proved  in \cite{S} that if $\phi$ is monotone, then the energy is constant on each $\M_k(y^-,y^+;J,\phi)$.
Since all fixed points of $\phi$ are nondegenerate the set  $\fix\phi$ is a finite set and the
$\Z_2$-vector space  $
\CF(\phi) := \Z_2^{\#\fix\phi}$
admits a $\Z_2$-grading with $(-1)^{\deg y}=\sign(\det(\id-\de\phi_y))$,
for all $y\in \fix\phi$.
The boundness of the energy  $E(u)$   for monotone  $\phi$  implies that   the  0-dimensional  quotients   $\M_1(y_-,y_+,J,\phi)/\R$   are actually finite sets. Denoting by  $n(y_-,y_+)$  the number of
points mod 2 in each of them, one defines a differential  $\partial_{J}:
CF_*(\phi) \rightarrow CF_{* + 1}(\phi)$  by $\partial_{J}y_- =
\sum_{y_+} n(y_-,y_+) {y_+}$.  Due to gluing theorem  this Floer boundary operator satisfies  $\partial_{J} \circ
\partial_{J} = 0$.  For gluing  theorem to hold one needs again the  boundness of the energy $E(u)$ .  It follows that  $ (\CF(\phi),\partial_{J})$  is a chain complex  and its homology is by definition the Floer homology of  $\phi$   denoted $HF_*(\phi)$. It  is independent of $J$ and is an invariant of $\phi$.

If $\phi$ has degenerate fixed points one needs to perturb equations
\eqref{eq:corbit} in order to define the Floer homology. Equivalently, one
could say that the action form needs to be perturbed.
 The necessary analysis is  given in \cite{S}, it  is essentially  the same as in the slightly different
situations considered in \cite{ds}. But  Seidel's approach also differs from the usual one in \cite{ds}. He uses a larger
class of perturbations, but such that the perturbed action form is still
cohomologous to the unperturbed.

\subsection{Nielsen numbers and Floer homology\label{sec:numbers}}

\subsubsection{Periodic diffeomorphisms}

\begin{lemma}\label{lemma:jiang}\cite{J}
Let $\phi$  a non-trivial orientation
preserving periodic diffeomorphism of a compact connected surface $M$
of Euler characteristic $\chi(M) \leq0$. Then each fixed point class of $\phi$
consists of a single point.
\end{lemma}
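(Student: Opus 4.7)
The plan is to argue by contradiction using the universal-cover characterisation of fixed point classes, reducing to a rigidity statement for periodic orientation-preserving self-homeomorphisms of the plane. I would suppose that two distinct fixed points $x_0\ne x_1$ of $\phi$ lie in the same fixed point class. By the definition recalled in the introduction, there then exist a lift $\tilde\phi\colon\tilde M\to\tilde M$ of $\phi$ and points $\tilde x_i\in p^{-1}(x_i)$ with $\tilde\phi(\tilde x_i)=\tilde x_i$ for $i=0,1$; since $x_0\ne x_1$ these lifts are distinct in $\tilde M$.

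First, I would show that $\tilde\phi$ is itself periodic. If $\phi^n=\id_M$, then $\tilde\phi^n$ is a lift of the identity, hence a deck transformation $\gamma\in\pi_1(M)$. Because the deck group acts freely on $\tilde M$ while $\tilde\phi^n$ fixes $\tilde x_0$, we must have $\gamma=\id$ and therefore $\tilde\phi^n=\id_{\tilde M}$.

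The crucial step is then a planar rigidity argument. Since $M$ is a compact connected orientable surface with $\chi(M)\le 0$, the universal cover $\tilde M$ is homeomorphic to $\R^2$ (or to a closed half-plane if $\partial M\ne\emptyset$, where a parallel argument applies). By the classical theorem of Brouwer--Ker\'ekj\'art\'o, every finite-order orientation-preserving self-homeomorphism of $\R^2$ is topologically conjugate to a rotation about a point, and a non-trivial rotation has exactly one fixed point. Hence the finite-order map $\tilde\phi$, having two distinct fixed points, must be $\id_{\tilde M}$. The covering relation $p\circ\tilde\phi=\phi\circ p$ then forces $\phi=\id_M$, contradicting the non-triviality of $\phi$.

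The main obstacle is this last step, since it depends on the Brouwer--Ker\'ekj\'art\'o classification. A more elementary substitute, which I expect is closer to Jiang's original route, is to average an arbitrary Riemannian metric over the finite cyclic group $\langle\phi\rangle$, making $\phi$ an isometry, and then to upgrade to a $\phi$-invariant metric of constant curvature on $M$ (hyperbolic in genus $\ge 2$ via uniformization in the $\phi$-invariant conformal class; flat in the torus case after conformal rescaling). Then $\tilde\phi$ becomes an orientation-preserving isometry of $\mathbb{H}^2$ or $\R^2$, and the explicit classification of such isometries (elliptic, parabolic, or hyperbolic in the hyperbolic case; rotation or translation in the flat case) shows that no non-trivial one fixes two distinct points, giving the same contradiction.
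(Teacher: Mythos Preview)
The paper does not actually prove this lemma; it is stated with a citation to Jiang \cite{J} and then used as a black box. So there is no proof in the paper to compare your argument against.

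That said, your argument is correct in the closed case, which is all the surrounding text needs (the paper fixes $M$ to be closed of genus $\ge 2$ at the start of Section~\ref{sec:floer}). The reduction to a periodic lift $\tilde\phi$ on $\tilde M\cong\R^2$ is clean, and Brouwer--K\'er\'ekj\'art\'o finishes it. Your second route---averaging to an invariant metric and, via uniformization in the invariant conformal class, upgrading to constant curvature so that $\tilde\phi$ becomes an isometry of $\mathbb H^2$ or Euclidean $\R^2$---is indeed closer to Jiang's differential-geometric argument, and has the advantage of not invoking the (somewhat deep) Brouwer--K\'er\'ekj\'art\'o classification.

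One small inaccuracy: for compact $M$ with boundary the universal cover is \emph{not} in general a closed half-plane. Already for the annulus ($\chi=0$) it is an infinite strip $\R\times[0,1]$, and for a pair of pants it is a more complicated simply connected planar region with infinitely many boundary arcs; Brouwer--K\'er\'ekj\'art\'o does not apply directly to such domains. If you want the full generality of the lemma as stated, either run your constant-curvature argument with a hyperbolic metric with geodesic boundary, or double $M$ along $\partial M$ to reduce to the closed case (the doubled map is still periodic, orientation-preserving, and has $\chi(DM)=2\chi(M)\le 0$).
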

There are  two criteria for monotonicity which we use later on.
 Let $\omega$ be an area form on $M$ and
$\phi\in\symp(\Sigma,\omega)$. 
\begin{lemma}\label{lemma:monotone1}\cite{G}
Assume that every class $\alpha\in\ker(\id-\phi_*)\subset H_{1}(M;\Z)$ is
represented by a map $\gamma:S\to\fix\phi$, where $S$ is a compact oriented
1-manifold. Then $\phi$ is monotone.
\end{lemma}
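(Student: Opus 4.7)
The plan is to establish monotonicity by verifying that the class $m(\phi)\in H^1(M;\R)/\im(\id-\phi^*)$ vanishes. By the universal coefficient theorem this quotient pairs nondegenerately with $\ker(\id-\phi_*)\subset H_1(M;\R)$ via $\langle[\theta],\alpha\rangle=\int_\alpha\theta$, the ambiguity $\theta\mapsto\theta+\eta-\phi^*\eta$ dropping out because $\phi_*\alpha=\alpha$, so it suffices to show $\langle m(\phi),\alpha\rangle=0$ for every integral class $\alpha\in\ker(\id-\phi_*)$.

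For such an $\alpha$, the hypothesis supplies a smooth representative $\gamma:S\to\fix{\phi}$, from which I would form
\[
\til\gamma\colon S\times S^1\to T_\phi,\qquad (s,t)\mapsto [(t,\gamma(s))],
\]
well-defined precisely because each $\gamma(s)$ is fixed by $\phi$. Using the explicit transgression formula $d[\theta]=[\rho\,\theta\wedge dt]$ from \eqref{eq:cohomology} together with Fubini, one checks the key identity
\[
\langle d[\theta],[\til\gamma]\rangle=\int_{S\times S^1}\rho(t)\,\gamma^*\theta\wedge dt=\int_S\gamma^*\theta=\langle[\theta],\alpha\rangle.
\]
Applied to the defining relation $d\,m(\phi)=[\omega_\phi]-\bigl(\area_\omega(M)/\chi(M)\bigr)c_\phi$, this reduces the problem to the two vanishings $\langle[\omega_\phi],[\til\gamma]\rangle=0$ and $\langle c_\phi,[\til\gamma]\rangle=0$.

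The first vanishing is immediate: $\til\gamma^*\omega_\phi$ factors through the pullback $\gamma^*\omega$, which is zero because $\omega$ is a $2$-form on the $1$-manifold $S$. The second is the geometric heart of the proof and the main obstacle, and the plan is to exhibit a nowhere vanishing section of the rank-$2$ oriented bundle $\til\gamma^*V_\phi$ on $S\times S^1$. Differentiating the identity $\phi\circ\gamma=\gamma$ gives $d\phi_{\gamma(s)}\gamma'(s)=\gamma'(s)$, which is exactly the compatibility with the gluing $(t{+}1,\xi_x)\sim(t,d\phi_x\xi_x)$ defining $V_\phi$; hence the constant-in-$t$ assignment $(s,t)\mapsto\gamma'(s)\in T_{\gamma(s)}M$ descends to a section of $\til\gamma^*V_\phi$. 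The obstacle is that the hypothesis only guarantees smoothness of $\gamma$, not that it be an immersion, so $\gamma'$ may vanish; I would remove this by a preliminary reduction component by component. Components of $S$ whose image lies in a $0$-dimensional piece of $\fix{\phi}$ are forced to be constant, so their contribution to $\til\gamma$ has $1$-dimensional image, is zero in $H_2(T_\phi;\Z)$, and pairs trivially with $c_\phi$. Components whose image lies in a $1$-dimensional piece of $\fix{\phi}$ can be homotoped within that piece to an embedded circle (general position in dimension one), and for such representatives $\gamma'$ is nowhere zero. Combining the two vanishings then yields $\langle m(\phi),\alpha\rangle=0$, and monotonicity of $\phi$ follows.
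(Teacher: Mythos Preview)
The paper does not give its own proof of this lemma; it is simply quoted from Gautschi~\cite{G}. Your overall strategy---pairing $m(\phi)$ against invariant integral homology via the tori $\tilde\gamma\subset T_\phi$, reducing to the two separate vanishings $\langle[\omega_\phi],[\tilde\gamma]\rangle=0$ and $\langle c_\phi,[\tilde\gamma]\rangle=0$, and trivialising $\tilde\gamma^*V_\phi$ by the $d\phi$-invariant field $\gamma'$---is the natural argument and almost certainly what Gautschi does. The transgression identity, the dimension argument for $\omega_\phi$, and the observation $d\phi_{\gamma(s)}\gamma'(s)=\gamma'(s)$ are all correct.

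The weak point is your repair of the immersion hypothesis. Splitting $\fix\phi$ into ``$0$-dimensional'' and ``$1$-dimensional pieces'' presupposes a stratified structure that the fixed set of an arbitrary symplectomorphism need not possess: $\fix\phi$ is only a closed subset of $M$. Even granting such a structure, the dichotomy is not exhaustive (you omit components landing in a region where $\phi=\id$, which is the $2$-dimensional case---easily fixed by homotoping to an immersion there), and a loop in a $1$-dimensional piece cannot be homotoped to an embedded circle if that piece is an arc. A cleaner reduction: components of $S$ on which $\gamma$ is null-homotopic in $M$ may be replaced by constant maps into $\fix\phi$ without changing $\alpha$, and these contribute trivially as you say; for essential components one must genuinely produce an immersed representative inside $\fix\phi$, or else argue directly that the clutching loop $s\mapsto d\phi_{\gamma(s)}$ is null-homotopic in $SL(2,\R)$. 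For the only use made of the lemma in this survey (via Lemma~\ref{lemma:monotone2} with $\phi^\ell=\id$, so $\fix\phi^\ell=M$), immersed representatives in the full surface are available and your argument goes through without difficulty.
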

\begin{lemma}\label{lemma:monotone2}\cite{G}
If $\phi^k$ is monotone for some $k>0$, then $\phi$ is monotone.
If $\phi$ is monotone, then $\phi^k$ is monotone for all $k>0$. 
\end{lemma}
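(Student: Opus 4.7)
The plan is to exploit the natural $k$-fold covering map $p:T_{\phi^k}\to T_\phi$ obtained by pulling back the mapping-torus bundle $T_\phi\to S^1=\R/\Z$ along the degree-$k$ self-cover $S^1\to S^1$, $z\mapsto z^k$. Concretely, $p$ would send $[t,x]\in T_{\phi^k}$ to $[kt,x]\in T_\phi$, and this is well-defined because the generating identification $(t+1,x)\sim(t,\phi^k(x))$ on $T_{\phi^k}$ maps to two points related by $k$ iterations of $(s+1,y)\sim(s,\phi(y))$ on $T_\phi$.

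The first key step is to verify the two naturality identities
\begin{equation*}
p^*[\omega_\phi]=[\omega_{\phi^k}], \qquad p^*c_\phi=c_{\phi^k}.
\end{equation*}
The first holds because $\omega_\phi$ is locally the pullback of the fixed $M$-form $\omega$, and $\omega_{\phi^k}$ is defined by the same recipe. The second follows from a canonical isomorphism of oriented rank-$2$ real bundles $p^*V_\phi\cong V_{\phi^k}$: traversing the base $S^1$ of $T_{\phi^k}$ once corresponds to traversing the base of $T_\phi$ exactly $k$ times, and the monodromies $\de\phi^k$ and $(\de\phi)^k$ agree. Since $\area_\omega(M)$ and $\chi(M)$ depend only on $M$, applying $p^*$ to the monotonicity equation for $\phi$ yields the monotonicity equation for $\phi^k$, proving the direction ``$\phi$ monotone $\Rightarrow\phi^k$ monotone''.

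For the converse, I would invoke the transfer homomorphism $p_!:H^2(T_{\phi^k};\R)\to H^2(T_\phi;\R)$ attached to the finite covering $p$. Since $p$ has degree $k$ and we work with real coefficients, the composition satisfies $p_!\circ p^*=k\cdot\id$. Hence if
\begin{equation*}
\alpha:=[\omega_\phi]-\frac{\area_\omega(M)}{\chi(M)}\cdot c_\phi\;\in\; H^2(T_\phi;\R)
\end{equation*}
satisfies $p^*\alpha=0$ (which is precisely the monotonicity of $\phi^k$ translated via the naturality identities), then $k\alpha=p_!p^*\alpha=0$ and therefore $\alpha=0$, i.e.\ $\phi$ is monotone.

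The only mildly delicate point is checking the bundle isomorphism $p^*V_\phi\cong V_{\phi^k}$ as \emph{oriented} rank-$2$ bundles and verifying compatibility of the $2$-form representatives, but both are formal consequences of realizing $T_{\phi^k}$ as the pullback of $T_\phi\to S^1$ along $z\mapsto z^k$, so I do not foresee any serious obstacle in completing the argument.
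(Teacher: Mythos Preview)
Your argument is correct. The covering map $p:T_{\phi^k}\to T_\phi$ realizing $T_{\phi^k}$ as the pullback of $T_\phi\to S^1$ along the degree-$k$ self-cover of $S^1$ is well defined as you describe, and the two naturality identities $p^*[\omega_\phi]=[\omega_{\phi^k}]$ and $p^*c_\phi=c_{\phi^k}$ follow directly from the definitions. The forward implication is then immediate by applying $p^*$, and your use of the transfer $p_!$ (with $p_!\circ p^*=k\cdot\id$ and real coefficients) cleanly gives the converse. An equivalent way to phrase the converse is simply that $p^*$ is injective on $H^2(\,\cdot\,;\R)$ for a finite cover, which is exactly what the transfer identity encodes.

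As for comparison: the present paper does not supply a proof of this lemma at all---it is quoted from Gautschi \cite{G} without argument. In Gautschi's paper the lemma is proved via the invariant $m(\phi)\in H^1(M;\R)/\im(\id-\phi^*)$ and the short exact sequence \eqref{eq:cohomology}, tracking how $m(\phi)$ and $m(\phi^k)$ are related through the natural map $H^1(M;\R)/\im(\id-\phi^*)\to H^1(M;\R)/\im(\id-(\phi^k)^*)$. Your covering-space/transfer approach is somewhat more conceptual and avoids unpacking that exact sequence, at the cost of invoking the transfer; both routes are short and equally effective.
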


We shall say  that $\phi:M\rightarrow M$ is a  periodic map of  period $m$,
if $\phi^m$ is  the identity map $\id_M:M\rightarrow M$.

\begin{theorem}\label{thm:main}\cite{ff}
If $\phi$ is  a non-trivial orientation
preserving periodic diffeomorphism of a compact connected surface $M$
of Euler characteristic $\chi(M) <  0$, then $\phi$ is monotone with
respect to some $\phi$-invariant area form and
$$
\HF(\phi) \cong
\Z_2^{N (\phi)},  \,\,\,  \dim\HF(\phi)= N (\phi),
$$
where  $ N (\phi)$ denotes the Nielsen  number of  $\phi$.
\end{theorem}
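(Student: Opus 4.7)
The plan is to verify the three pieces of the statement in sequence: the existence of a $\phi$-invariant area form, monotonicity, and the Floer-homology calculation. For the area form, average any chosen area form over the cyclic group generated by $\phi$: if $m$ is the period of $\phi$ and $\omega_0$ is an area form, set $\omega := \frac1m\sum_{k=0}^{m-1}(\phi^k)^*\omega_0$. Since $\phi$ is orientation-preserving, each pullback is again positive, so $\omega$ is a genuine area form, manifestly $\phi$-invariant. Thus $\phi \in \symp(M,\omega)$ and moreover $\phi^m = \id_M$. The identity is monotone by the Identity property listed in Section 4.1.1, so monotonicity of $\phi$ is immediate from Lemma \ref{lemma:monotone2}.

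Next I would analyse the fixed point set. Because $\phi$ is periodic and orientation-preserving, at any $p\in\fix\phi$ the differential $\de\phi_p$ lies in $SO(2)$ and has order dividing $m$; in suitable coordinates it is a rotation $R_\theta$ with $\theta\in\frac{2\pi}{m}\Z$. I want to rule out $\theta=0$. If $\de\phi_p=\id$, the Bochner linearization theorem for periodic diffeomorphisms provides a neighbourhood of $p$ on which $\phi$ is smoothly conjugate to its linear part, hence to the identity; but $\{x\in M : \phi(x)=x\text{ on a neighbourhood of }x\}$ is open and closed in the connected surface $M$, forcing $\phi=\id_M$, contradicting non-triviality. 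Thus $\theta\neq 0$ and
\begin{equation*}
\det(\id - \de\phi_p) = 2-2\cos\theta > 0 ,
\end{equation*}
so every fixed point is nondegenerate with $\sign\det(\id-\de\phi_p)=+1$, and in particular $\fix\phi$ is a finite set.

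With nondegeneracy in hand, no perturbation of the action form is needed to define $\HF(\phi)$ and the chain complex $\CF(\phi)=\Z_2^{\#\fix\phi}$ is already well defined. The grading convention $(-1)^{\deg y} = \sign\det(\id-\de\phi_y)$ combined with the calculation above forces $\deg y = 0$ for every generator $y$. Hence $\CF(\phi)$ is concentrated in a single $\Z_2$-degree, the Floer differential (which shifts degree by $1$) is automatically zero, and $\HF(\phi)\cong \Z_2^{\#\fix\phi}$. Finally, Lemma \ref{lemma:jiang} says every fixed point class of $\phi$ consists of a single point, and the positive index $+1$ computed above shows that every such class is essential; therefore $\#\fix\phi = N(\phi)$, completing the identification $\HF(\phi)\cong \Z_2^{N(\phi)}$.

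The main obstacle, to my mind, is the nondegeneracy step: one must genuinely exclude $\de\phi_p=\id$ to know the fixed points are isolated and that the unperturbed Floer complex is legitimate, and this is where the Bochner-type linearization for periodic maps is essential. Once that is secured the remaining ingredients (averaging, Lemma \ref{lemma:monotone2}, the sign computation, and Jiang's lemma) fit together routinely.
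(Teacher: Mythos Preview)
Your proof is correct and shares the overall scaffolding with the paper's argument (averaging for the invariant form, Lemma~\ref{lemma:monotone2} for monotonicity, Lemma~\ref{lemma:jiang} for the Nielsen count), but the key step---vanishing of the Floer differential---is handled by a genuinely different mechanism. The paper argues topologically: since any two fixed points lie in distinct Nielsen classes by Lemma~\ref{lemma:jiang}, the first equation in \eqref{eq:corbit} forbids Floer connecting orbits between them, so $\M(y^-,y^+;J,\phi)=\emptyset$ and $\partial_J=0$ directly. You instead argue by grading: having computed $\det(\id-\de\phi_p)>0$ at every fixed point, all generators sit in $\Z_2$-degree $0$, and a differential shifting degree by $1$ must vanish. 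Your route has the virtue of making nondegeneracy explicit via Bochner linearization---a point the paper's proof uses tacitly when writing $\CF(\phi)=\Z_2^{\#\fix\phi}$ but never verifies---and it also supplies the index-$+1$ calculation needed to see that each singleton fixed-point class is \emph{essential}, which the paper's line ``so $\#\fix\phi=N(\phi)$'' glosses over. The paper's separation argument, on the other hand, does not depend on all fixed points having the same sign, and so it is the mechanism that survives into the more general finite-type setting of Theorems~\ref{thm:main0} and~\ref{th:ga}.
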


\begin{proof}
Let  $\phi$ be  a periodic diffeomorphism  of least period $l$.
 First note that
if $\tilde{\omega}$ is an area form on $M$, then area form
$\omega:=\sum_{i=1}^\ell(\phi^i)^*\tilde{\omega}$ is 
 $\phi$-invariant, i.e. $\phi\in\symp(M,\omega)$. By periodicity of  $\phi$,
 $\phi^l$ is  the identity map $\id_M:M\rightarrow M$.
Then from Lemmas \ref{lemma:monotone1}  and \ref{lemma:monotone2} it follows that $\omega$ can be chosen such that $\phi\in\symp^m(M,\omega)$.

 Lemma \ref{lemma:jiang}  implies that every
$y\in\fix\phi$ forms a different fixed point class of
$\phi$, so $ \#\fix\phi= N (\phi)$.
This has an immediate consequence for the Floer complex $(\CF(\phi),\p_{J})$
with respect to a generic $J=(J_t)_{t\in\R}$.
If $y^\pm\in\fix\phi$ are in different fixed point classes, then
$\M(y^-,y^+;J,\phi)=\emptyset$. This follows from the first equation in
\eqref{eq:corbit}.
Then the boundary map in  the Floer complex is zero $\partial_{J}=0$ and
$\Z_2$-vector space  $
\CF(\phi) := \Z_2^{\#\fix\phi}=\Z_2^{N(\phi)}$. This immediately  implies
$
\HF(\phi) \cong
\Z_2^{N (\phi)}
$ and $ \dim\HF(\phi)= N (\phi)$.

\end{proof}

\subsubsection{Algebraically finite mapping classes}

 A mapping class of $M$ is called  algebraically finite if it
does not have any pseudo-Anosov components in the sense of Thurston's
theory of surface diffeomorphism.The term algebraically finite goes back to J. Nielsen.\\
 In  \cite{G}  the diffeomorphisms  of finite type  were defined . These are
special representatives of algebraically finite mapping classes
adopted to the symplectic geometry. 
\begin{dfn}\label{def:ftype}\cite{G}
We call $\phi\in\diff_+(M)$ of {\bf finite type} if the following holds.
There is a $\phi$-invariant finite union $N\subset M$ of disjoint
non-contractible annuli such that:
\smallskip\\
(1) $\phi|M \setminus N$ is periodic, i.e. there exists
$\ell>0$ such that $\phi^\ell|M \setminus N=\id$.
\smallskip\\
(2) Let $N'$ be a connected component of $N$ and $\ell'>0$ be the
smallest integer such that $\phi^{\ell'}$ maps $N'$ to itself. Then
$\phi^{\ell'}|N'$ is given by one of the following two models with respect to
some coordinates $(q,p)\in I\times S^1$:
\medskip\\
\begin{minipage}{4cm}
(twist map)
\end{minipage}
\begin{minipage}{6cm}
$(q,p)\Mapsto(q,p-f(q))$
\end{minipage}
\medskip\\
\begin{minipage}{4cm}
(flip-twist map)
\end{minipage}
\begin{minipage}{6cm}
$(q,p)\Mapsto(1-q,-p-f(q))$,
\end{minipage}
\medskip\\
where $f:I\to \R$ is smooth and strictly monotone.
A twist map is called  positive or negative,
if $f$ is increasing or decreasing.
\smallskip\\
(3) Let $N'$ and $\ell'$ be as in (2).
If $\ell'=1$ and $\phi|N'$ is a twist map, then $\im(f)\subset[0,1]$,
i.e. $\phi|\text{int}(N')$ has no fixed points.
\smallskip\\
(4) If two connected components of $N$ are homotopic, then the corresponding local
models of $\phi$ are either both positive or both negative twists.

\end{dfn}
The term flip-twist map is taken from \cite{JG}.

By  $M_{\id}$ we denote the union of the components
of $M\setminus\text{int}(N)$, where $\phi$ restricts to the identity.

The next  lemma describes the set of fixed point classes of $\phi$. It
is a special case of a theorem by B.~Jiang and J.~Guo~\cite{JG}, which gives
for any mapping class a representative that realizes its
Nielsen number.
\begin{lemma}[Fixed point classes]\label{lemma:fclass}\cite{JG}
Each fixed point class of $\phi$ is either a connected component of $M_{\id}$ or
consists of a single fixed point. A fixed point $x$ of the second type satisfies
$\det(\id-\de\phi_x)>0$.
\end{lemma}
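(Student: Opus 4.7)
The plan is to verify the statement piece by piece using the $\phi$-invariant decomposition $M=(M\setminus\text{int}(N))\cup N$ from Definition \ref{def:ftype}, then to check the Nielsen path-homotopy criterion on the resulting list of fixed-point pieces.

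First I would enumerate $\fix\phi$. On $M_{\id}$ every point is fixed. On any other component $M'$ of $M\setminus\text{int}(N)$, $\phi|_{M'}$ is a non-trivial periodic orientation-preserving diffeomorphism, so by Lemma \ref{lemma:jiang} its fixed points are isolated. On a component $N'$ of $N$ with $\phi^{\ell'}|_{N'}$ a twist map, condition (3) of Definition \ref{def:ftype} (when $\ell'=1$) forbids fixed points in $\text{int}(N')$; when $\ell'>1$, $\phi$ sends $N'$ to a different component of $N$, so again $\phi$ has no fixed points in $N'$. On a flip-twist component (necessarily with $\ell'=1$ for $\phi$ to fix points in $N'$), solving $(q,p)=(1-q,-p-f(q))$ gives exactly two isolated fixed points at $q=1/2$, $2p\equiv -f(1/2)\pmod 1$.

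Next I would verify the index condition at each isolated fixed point. For a flip-twist fixed point, $\de\phi_x=\left(\begin{smallmatrix}-1 & 0\\-f'(1/2) & -1\end{smallmatrix}\right)$, so $\det(\id-\de\phi_x)=4>0$. For an isolated fixed point of a non-trivial periodic orientation-preserving diffeomorphism on a component of $M\setminus N$, choose local complex coordinates in which $\de\phi_x$ acts as multiplication by a non-trivial root of unity $\zeta$; then $\det(\id-\de\phi_x)=|1-\zeta|^2>0$.

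Third, I would check the Nielsen equivalence on the resulting list. Two points in the same connected component of $M_{\id}$ are joined by a path $c$ lying inside $M_{\id}$, on which $\phi$ is the identity; hence $\phi\circ c=c$ and they are Nielsen equivalent. To show that distinct components of $M_{\id}$ and the isolated fixed points each form their own Nielsen class, I would lift to the universal cover $\tilde M$: a path $c$ realising the equivalence, together with $\phi\circ c$, would close up in a way that forces a non-trivial deck translation. Any such path must cross at least one annular component $N'$ in whose interior $\phi$ has no fixed points, and the twist $(q,p)\mapsto(q,p-f(q))$, lifted to $I\times\R$, contributes a non-zero shear to the comparison between $c$ and $\phi\circ c$ that obstructs the rel-endpoint homotopy; condition (4) of Definition \ref{def:ftype} ensures that contributions from homotopic parallel annuli cannot cancel. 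A parallel argument, using the non-trivial rotation $\zeta$, separates isolated fixed points of a periodic non-identity component from everything else.

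The main obstacle is this last step. The enumeration of fixed points and the index computation are direct from the local models, but the separation of Nielsen classes requires the covering-space argument together with a careful use of conditions (3) and (4), which is where the finite-type hypothesis does genuine work. This is the content of Jiang--Guo's analysis in \cite{JG}, whose main theorem realises $N(\phi)$ by precisely the representatives described in Definition \ref{def:ftype}.
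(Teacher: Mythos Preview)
The paper does not give a proof of this lemma: it is stated as a citation, introduced as ``a special case of a theorem by B.~Jiang and J.~Guo~\cite{JG}''. So there is nothing in the paper to compare your argument against.

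That said, your sketch is sound on the local side. The enumeration of $\fix\phi$ from the models in Definition~\ref{def:ftype} is correct, as are the two index computations; for the periodic piece you might make explicit why $\zeta\ne 1$ (average a metric so that $\phi$ is an isometry, then $\de\phi_x=\id$ would force $\phi=\id$ near $x$ via the exponential map, contradicting isolation). You are also right that the substantive content lies in the Nielsen separation step, and that conditions (3) and (4) of Definition~\ref{def:ftype} are exactly what prevents cancellation of twist contributions along a putative Nielsen path crossing $N$. One point to tighten: when you invoke Lemma~\ref{lemma:jiang} on a non-identity periodic component $M'\subset M\setminus N$, that lemma separates fixed point classes computed in $M'$, i.e.\ relative to $\pi_1(M')$; to promote this to separation in $M$ you need that $M'\hookrightarrow M$ is $\pi_1$-injective (which holds because the boundary curves of $M'$ are non-contractible in $M$). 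Your final paragraph correctly acknowledges that the full cross-component separation is the Jiang--Guo argument rather than something you have proved here; since the paper itself simply cites \cite{JG} for the whole lemma, that is consistent with how the result is used.
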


The monotonicity of diffeomorphisms of finite type
was  investigated in details in \cite{G}.
Let $\phi$ be a diffeomorphism of finite type and
$\ell$ be as in (1).
Then $\phi^\ell$ is the product of (multiple)  Dehn twists along $N$.
Moreover, two parallel Dehn twists have the same sign, by (4). We say that
$\phi$ has  uniform twists, if $\phi^\ell$ is the product of only
positive, or only negative Dehn twists.
\smallskip\\
Furthermore, we denote by $\ell$ the smallest positive integer such that
$\phi^\ell$ restricts to the identity on $M\setminus N$.

If $\omega'$ is an area form on $M$ which is the standard form
$\de q\wedge\de p$ with respect to the $(q,p)$-coordinates on $N$, then
$\omega:=\sum_{i=1}^\ell(\phi^i)^*\omega'$ is
standard on $N$ and $\phi$-invariant, i.e. $\phi\in\symp(M,\omega)$.
To prove that $\omega$ can be chosen such that $\phi\in\symp^m(M,\omega)$,
Gautschi distinguishes two cases: uniform and non-uniform twists. In the first case he proves the following stronger statement.
\begin{lemma}\label{lemma:monotone3}\cite{G}
If $\phi$ has uniform twists and $\omega$ is a $\phi$-invariant area
form, then $\phi\in\symp^m(M,\omega)$.
\end{lemma}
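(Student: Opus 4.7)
The plan is to reduce the problem to proving monotonicity of $\phi^\ell$ via Lemma \ref{lemma:monotone2}, and then to apply the homological criterion of Lemma \ref{lemma:monotone1} to $\phi^\ell$. Since $\omega$ is $\phi$-invariant it is automatically $\phi^\ell$-invariant, so it suffices to exhibit $\phi^\ell$ as monotone. By condition (1) of Definition \ref{def:ftype}, $\phi^\ell$ restricts to the identity on $M\setminus N$, and from the local twist and flip-twist models one sees that on each annular component $N'\subset N$ the restriction $\phi^\ell|_{N'}$ is a power $\tau_{c_{N'}}^{k_{N'}}$ of a Dehn twist about the core curve $c_{N'}$. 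The uniform-twist hypothesis means that all the integers $k_{N'}$ share the same sign; after reversing orientation of the surface if necessary, I may assume they are positive.

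Enumerate the core curves as $c_1,\dots,c_n$ with associated positive twist powers $k_1,\dots,k_n$. The induced action on $H_1(M;\Z)$ is then
$$
(\phi^\ell)_*\alpha \;=\; \alpha + \sum_{i=1}^n k_i\,(\alpha\cdot c_i)\,c_i,
$$
so $\alpha\in\ker(\id-(\phi^\ell)_*)$ is equivalent to the relation $\sum_i k_i(\alpha\cdot c_i)c_i = 0$ in $H_1(M;\Z)$. I would pair this relation with $\alpha$ itself under the antisymmetric intersection form of the closed oriented surface $M$, obtaining
$$
0 \;=\; \Bigl(\sum_i k_i\,(\alpha\cdot c_i)\,c_i\Bigr)\cdot\alpha \;=\; -\sum_i k_i\,(\alpha\cdot c_i)^2.
$$
The positivity of every $k_i$ then forces $\alpha\cdot c_i = 0$ for all $i$. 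By the long exact sequence of the pair $(M,M\setminus N)$, combined with excision and the identification $H_1(M,M\setminus N)\cong H_1(N,\partial N)\cong\bigoplus_i\Z$ (the boundary map from $H_1(M)$ being given by intersection numbers with the $c_i$), this places $\alpha$ in the image of $H_1(M\setminus N;\Z)\to H_1(M;\Z)$. Hence $\alpha$ is represented by a map $\gamma:S\to M\setminus N\subset\fix(\phi^\ell)$, so Lemma \ref{lemma:monotone1} applies and yields $\phi^\ell\in\symp^m(M,\omega)$, whereupon Lemma \ref{lemma:monotone2} gives $\phi\in\symp^m(M,\omega)$.

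The step I expect to be the main obstacle is precisely the positivity argument, and it is the unique place where the uniform-twist hypothesis enters in an essential way. Were some $k_i$ of opposite sign, the sum $\sum k_i(\alpha\cdot c_i)^2$ could vanish without each intersection $\alpha\cdot c_i$ being zero, so classes in $\ker(\id-(\phi^\ell)_*)$ would in general have nonzero intersection with core curves of $N$ and would not be representable by cycles in $\fix(\phi^\ell)$; Lemma \ref{lemma:monotone1} would then fail to apply. In that non-uniform setting one would have to compare the cohomology classes $[\omega_\phi]$ and $c_\phi$ on the mapping torus $T_\phi$ directly, exploiting the isotopy and flux formulas of Section \ref{sec:floer} rather than the fixed-point representability criterion.
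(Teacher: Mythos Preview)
The paper does not prove this lemma; it merely quotes the statement from Gautschi \cite{G} without argument, so there is nothing in the paper itself to compare your proof against. That said, your proof is correct and is the natural one given the tools the paper records: reduce to $\phi^\ell$ via Lemma~\ref{lemma:monotone2}, use the Dehn-twist formula on $H_1$ together with the sign-definiteness forced by the uniform-twist hypothesis to show every $(\phi^\ell)_*$-invariant class has zero intersection with all core curves, push such a class into $H_1(M\setminus N)$ by the long exact sequence of the pair, and then invoke Lemma~\ref{lemma:monotone1} since $M\setminus N\subset\fix(\phi^\ell)$. This is essentially how Gautschi argues in \cite{G}. One cosmetic point: the restriction $\phi^\ell|_{N'}$ is not literally the standard model $\tau_{c_{N'}}^{k_{N'}}$ (the twisting profile is the monotone function $(\ell/\ell')f$ rather than a step), but it is isotopic to it rel $\partial N'$, so the action on $H_1$ and hence your positivity computation are unaffected. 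Your closing paragraph correctly identifies why the argument collapses without uniformity and why one must then work directly with $[\omega_\phi]$ and $c_\phi$ as in Lemma~\ref{lemma:monotone4}.
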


In the non-uniform case, monotonicity  does not
hold for arbitrary $\phi$-invariant area forms.
\begin{lemma}\label{lemma:monotone4}\cite{G}
If $\phi$ does not have uniform twists, there exists a $\phi$-invariant
area form $\omega$ such that $\phi\in\symp^m(M,\omega)$. Moreover,
$\omega$ can be chosen such that it is the standard form $\de q\wedge\de p$ on
$N$.
\end{lemma}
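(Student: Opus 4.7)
The plan is to start from an arbitrary area form $\tilde\omega$ on $M$ which is locally $\de q\wedge\de p$ on each annular component of $N$, and average over the $\phi$-action: set $\omega_0:=\frac1\ell\sum_{i=0}^{\ell-1}(\phi^i)^*\tilde\omega$. Since both the twist map $(q,p)\mapsto(q,p-f(q))$ and the flip-twist $(q,p)\mapsto(1-q,-p-f(q))$ preserve $\de q\wedge\de p$, the restriction of $\omega_0$ to every component of $N$ remains the standard form, and by construction $\phi^*\omega_0=\omega_0$. Thus $\omega_0$ satisfies every property required of $\omega$ except, possibly, monotonicity.

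Next I would analyse how $m(\phi)$ depends on the choice of $\phi$-invariant area form that is standard on $N$. Any two such forms differ by a $\phi$-invariant closed $2$-form $\eta$ supported in $M\setminus N$; normalising $\int_M\eta=0$ keeps the total area, hence $r^*[\omega_\phi]$, unchanged. From the short exact sequence \eqref{eq:cohomology} the difference $[(\omega_0+\eta)_\phi]-[(\omega_0)_\phi]$ then lies in the image of $d$, so the shift of $m(\phi)$ takes values in $H^1(M;\R)/\im(\id-\phi^*)$. Concretely, using the isotopy property $m(\phi\circ\psi_1)=m(\phi)+[\flux(\psi_t)]$, one can represent this shift as a flux: writing $\omega_0+s\eta$ as the evolution of a Hamiltonian-type family of area forms and pairing with $1$-cycles in $M$ gives a linear map $\Phi$ from $\phi$-invariant exact $\eta$ supported in $M\setminus N$ to $H^1(M;\R)/\im(\id-\phi^*)$.

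The crucial step will be proving that $\Phi$ is surjective under the non-uniform twist hypothesis, because the problem then reduces to choosing $\eta$ with $\Phi(\eta)=-m(\phi_0)$ and setting $\omega:=\omega_0+\eta$. To see surjectivity, partition the annuli $N=N_+\sqcup N_-$ according to the sign of the twist; by assumption both pieces are nonempty, and by condition (4) no two parallel components lie on opposite sides. Given a class $[\gamma]\in H^1(M;\R)/\im(\id-\phi^*)$, pair it against $\phi$-invariant $2$-forms $\eta$ parametrised by the area weights assigned to the $\phi$-orbits of connected components of $M\setminus N$. A $1$-cycle crossing a positive twist annulus acquires opposite flux contribution to one crossing a negative one, so the relation $\eta\mapsto\Phi(\eta)$ is the pairing of the cycle with a signed combinatorial gradient on the dual graph of components of $M\setminus N$ whose edges carry $\pm1$ according to twist sign. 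The uniform-sign case collapses this gradient to a coboundary (consistent with Lemma \ref{lemma:monotone3}, where $m(\phi)$ is automatically zero), while the existence of both signs makes the pairing non-degenerate modulo $\im(\id-\phi^*)$.

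The main obstacle is exactly this surjectivity argument: one needs to verify that the signed combinatorics of how positive and negative twist annuli separate $M$ realise every class in $H^1(M;\R)/\im(\id-\phi^*)$. I would expect to handle this by a case analysis on the Jiang--Guo normal form of $\phi$, using Lemma \ref{lemma:fclass} to organise the fixed-point components of $M_{\id}$, and checking that non-uniformity prevents any nontrivial obstruction from lying in the cokernel of $\Phi$. Once surjectivity is in hand, choosing $\eta$ to kill $m(\phi_0)$ produces a $\phi$-invariant area form $\omega$ which is standard on $N$ and satisfies $\phi\in\symp^m(M,\omega)$, completing the proof.
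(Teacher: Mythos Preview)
The paper does not prove this lemma; it merely states it with a citation to \cite{G}. There is therefore no proof in the paper to compare your proposal against.

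Regarding the proposal itself: the averaging step producing a $\phi$-invariant $\omega_0$ that is standard on $N$ is correct, and your strategy of perturbing $\omega_0$ by a $\phi$-invariant $2$-form $\eta$ supported away from $N$, with $\int_M\eta=0$, in order to adjust $m(\phi)$ is reasonable in outline. However, two points are not settled. First, your invocation of the Isotopy property is off: the formula $m(\phi\circ\psi_1)=m(\phi)+[\flux(\psi_t)]$ governs how $m$ changes under a symplectic isotopy of $\phi$ with $\omega$ held fixed, not under a change of the area form with $\phi$ held fixed; the correct mechanism here is a direct computation of $[\eta_\phi]\in H^2(T_\phi;\R)$ and its preimage under $d$. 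Second --- and you acknowledge this yourself --- the surjectivity of your map $\Phi$ is the entire content of the lemma, and you offer only a heuristic (``signed combinatorial gradient on the dual graph'') rather than an argument; you also do not address the positivity constraint needed so that $\omega_0+\eta$ remains an area form. So the proposal identifies the right overall shape of the argument but does not actually carry out the step where the non-uniform-twist hypothesis is used.
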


\begin{theorem}\label{thm:main0}\cite{ff}
If $\phi$ is  a  diffeomorphism of finite type  of a compact connected surface $M$
of Euler characteristic $\chi(M) <  0$ and if  $\phi$ has  only isolated fixed points , then $\phi$ is monotone with
respect to some $\phi$-invariant area form and
$$
\HF(\phi) \cong
\Z_2^{N (\phi)},  \,\,\,  \dim\HF(\phi)= N (\phi),
$$
where  $ N (\phi)$ denotes the Nielsen  number of  $\phi$.
\end{theorem}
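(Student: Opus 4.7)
The plan is to mirror the structure of the proof of Theorem \ref{thm:main}, replacing Lemma \ref{lemma:jiang} by Lemma \ref{lemma:fclass} for the identification of fixed point classes, and replacing the naive averaging trick for monotonicity by Lemmas \ref{lemma:monotone3} and \ref{lemma:monotone4}.

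First I would establish monotonicity. If $\phi$ has uniform twists, I would choose any $\phi$-invariant area form $\omega$ — for instance the averaged form $\omega:=\sum_{i=1}^{\ell}(\phi^i)^*\omega'$ starting from an $\omega'$ which is standard on $N$ — and apply Lemma \ref{lemma:monotone3} directly to conclude $\phi\in\symp^m(M,\omega)$. Otherwise Lemma \ref{lemma:monotone4} provides a $\phi$-invariant area form $\omega$, again standard on $N$, with the same monotonicity property.

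Next I would identify $\Fix(\phi)$ with the set of essential Nielsen classes. By Lemma \ref{lemma:fclass}, each fixed point class of $\phi$ is either a connected component of $M_{\id}$ or a single point $x$ with $\det(\id-\de\phi_x)>0$. The hypothesis that $\phi$ has only isolated fixed points rules out the first possibility, so $M_{\id}=\emptyset$ and every fixed point is a singleton class. The strict positivity of $\det(\id-\de\phi_x)$ forces the fixed-point index to be $+1$, hence every class is essential. Consequently $\#\Fix(\phi)=N(\phi)$. I would then run the Floer differential argument as in Theorem \ref{thm:main}: for a generic $\omega$-compatible family $J=(J_t)$ the chain group is $\CF(\phi)=\Z_2^{N(\phi)}$, and for $y^\pm\in\Fix(\phi)$ lying in distinct classes any $u\in\M(y^-,y^+;J,\phi)$ satisfying $u(s,t)=\phi(u(s,t+1))$ gives, via $H(s,t):=u(s,-t)$, a homotopy rel endpoints between the path $\gamma(s):=u(s,0)$ and $\phi\circ\gamma$; this would place $y^\pm$ in the same fixed point class, a contradiction. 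Hence $\M(y^-,y^+;J,\phi)=\emptyset$ for all distinct pairs, $\partial_J=0$, and $\HF(\phi)\cong\CF(\phi)=\Z_2^{N(\phi)}$.

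The main obstacle is reconciling the perturbation step in the definition of Floer homology with the Nielsen class decomposition when $\phi$ has isolated but possibly \emph{degenerate} fixed points (the hypothesis is isolation, not nondegeneracy). Following Seidel's framework \cite{S}, one perturbs the action form so that each degenerate fixed point resolves into a cluster of nondegenerate ones lying in a small neighborhood, hence inside a single original Nielsen class. The separation-of-classes argument above is stable under such a small perturbation, so the Floer complex splits as a direct sum of blocks indexed by Nielsen classes, and invariance under Hamiltonian-type perturbations implies that each block contributes one $\Z_2$ summand to $\HF(\phi)$; verifying that the local Floer homology at each class is exactly one-dimensional — which should follow from $\det(\id-\de\phi_x)>0$ and the fact that the local Lefschetz index equals $+1$ — is the delicate technical point that this plan would need to carry out in detail.
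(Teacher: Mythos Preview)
Your approach is essentially identical to the paper's: monotonicity via Lemmas \ref{lemma:monotone3} and \ref{lemma:monotone4}, then Lemma \ref{lemma:fclass} to see that each fixed point is its own class, then the topological separation argument forcing $\partial_J=0$.

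The ``main obstacle'' you flag in the last paragraph, however, is not actually there. You already invoked the second clause of Lemma \ref{lemma:fclass}: once $M_{\id}=\emptyset$, every fixed point $x$ satisfies $\det(\id-\de\phi_x)>0$. But this is exactly the nondegeneracy condition --- a fixed point is nondegenerate precisely when $\det(\id-\de\phi_x)\ne 0$. So all fixed points of $\phi$ are automatically nondegenerate, the Floer chain complex $\CF(\phi)$ is defined directly without any perturbation of the action form, and the index computation $(-1)^{\deg x}=\sign\det(\id-\de\phi_x)=+1$ shows the grading is concentrated in degree $0$. The paper's proof simply omits your last paragraph because no perturbation analysis is needed; you should do the same.
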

\begin{proof}
From Lemmas \ref{lemma:monotone3}  and \ref{lemma:monotone4} it follows that $\omega$ can be chosen such that $\phi\in\symp^m(M,\omega)$.
 Lemma \ref{lemma:fclass}  implies that every
$y\in\fix\phi$ forms a different fixed point class of
$\phi$, so $ \#\fix\phi= N (\phi)$.
This has an immediate consequence for the Floer complex $(\CF(\phi),\p_{J})$
with respect to a generic $J=(J_t)_{t\in\R}$.
If $y^\pm\in\fix\phi$ are in different fixed point classes, then
$\M(y^-,y^+;J,\phi)=\emptyset$. This follows from the first equation in
\eqref{eq:corbit}.
Then the boundary map in  the Floer complex is zero $\partial_{J}=0$ and
$\Z_2$-vector space  $
\CF(\phi) := \Z_2^{\#\fix\phi}=\Z_2^{N(\phi)}$. This immediately  implies
$
\HF(\phi) \cong
\Z_2^{N (\phi)}
$ and $ \dim\HF(\phi)= N (\phi)$.

\end{proof}

\begin{theorem}\label{th:ga}\cite{G}
Let  $\phi$ be  a diffeomorphism of finite type, then $\phi$ is monotone with
respect to some $\phi$-invariant area form and
$$
\HF(\phi) =
H_*(M_{\id},\p_{M_{\id}};\Z_2)\oplus
\Z_2^{L(\phi|M\setminus M_{\id})}.
$$
Here, $L$ denotes the Lefschetz number.
\end{theorem}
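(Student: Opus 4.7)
The plan has four steps, mirroring the structure of Theorem \ref{thm:main0} but with an extra Morse-theoretic ingredient to handle the non-isolated fixed point classes.

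First, I would dispense with monotonicity exactly as in the proof of Theorem \ref{thm:main0}: Lemmas \ref{lemma:monotone3} and \ref{lemma:monotone4} together produce a $\phi$-invariant area form $\omega$, standard as $\de q\wedge\de p$ on the twist annuli $N$, such that $\phi\in\symp^m(M,\omega)$, regardless of whether the twists are uniform. Lemma \ref{lemma:fclass} then partitions $\fix(\phi)$ into two qualitatively different pieces: the connected components of $M_{\id}$, each forming a single (Morse--Bott-like, highly degenerate) fixed point class, and the isolated fixed points in $M\setminus M_{\id}$ coming from the interiors of flip-twist annuli, each satisfying $\det(\id-\de\phi_x)>0$ and hence already non-degenerate of index $+1$.

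Next, to make the action form Morse in the sense of Floer, I would choose a Hamiltonian-type perturbation $\psi$ of $\phi$ supported in an arbitrarily small neighborhood $U$ of $M_{\id}$ such that $\psi$ is the identity on the isolated fixed points in $M\setminus M_{\id}$ and so that, on each component $C$ of $M_{\id}$, the perturbed fixed points correspond bijectively to critical points of a fixed Morse function $f_C\colon C\to\R$. By the Isotopy property together with the perturbation framework of Seidel recalled in \S4.1, the perturbation lies in $\symp^m(M,\omega)$ and $\HF(\psi)\cong\HF(\phi)$ as $H^*(M;\Z_2)$-modules. Since the first equation in (\ref{eq:corbit}) forces any Floer cylinder $u$ to satisfy $u(s,t)=\phi(u(s,t+1))$ and thus to project to a path between two fixed points lying in the same Nielsen fixed point class, the Floer complex $\CF(\psi)$ splits as a direct sum of subcomplexes indexed by fixed point classes: one subcomplex per isolated fixed point of $\phi$ in $M\setminus M_{\id}$, and one subcomplex per component of $M_{\id}$.

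For the isolated fixed points, exactly the argument of Theorem \ref{thm:main0} applies: each subcomplex is $\Z_2$-rank one with zero differential; their total count equals $\sum_x \sign\det(\id-\de\phi_x)$ over the isolated fixed points of $\phi$, which is the Lefschetz number $L(\phi|M\setminus M_{\id})$. For a component $C$ of $M_{\id}$, the contributing subcomplex is generated by the critical points of $f_C$, and the task is to identify its Floer differential with a Morse differential. For a suitable small, $t$-independent perturbation and an adapted almost complex structure, standard Floer--Morse adiabatic arguments identify low-energy holomorphic cylinders between critical points lying in a common component $C$ with negative gradient flow lines of $f_C$ on $C$; the boundary behavior of $\phi$ on the twist annuli adjacent to $C$ dictates whether $f_C$ points inward or outward at $\partial C$, and $f_C$ can be chosen so that on every boundary circle the gradient points outward, giving relative Morse homology $H_*(C,\partial C;\Z_2)$.

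The hard part will be this last identification --- controlling the holomorphic cylinders in a neighborhood of a component $C$ of $M_{\id}$ and verifying that the boundary conditions come out consistently to yield the \emph{relative} homology $H_*(M_{\id},\partial M_{\id};\Z_2)$ rather than the absolute one. This is what forces the specific choice of the Hamiltonian perturbation and the almost complex structure, and it is the step where the distinction between positive and negative twists, encoded in Definition \ref{def:ftype}(4) and Lemma \ref{lemma:monotone3}, actually enters the computation. Once this identification is in place, summing the subcomplex contributions over the fixed point classes yields
\[
\HF(\phi)\cong \HF(\psi)\cong \bigoplus_{C\subset M_{\id}} H_*(C,\partial C;\Z_2)\ \oplus\ \Z_2^{L(\phi|M\setminus M_{\id})},
\]
which is the claimed formula.
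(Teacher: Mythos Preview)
Your proposal follows essentially the same strategy as the paper: establish monotonicity via Lemmas \ref{lemma:monotone3}--\ref{lemma:monotone4}, perturb by a Hamiltonian supported on $M_{\id}$ (the paper writes $\Phi=\phi\circ\psi_1$ for the time-one flow of a Hamiltonian $H$ with $H|_{M\setminus M_{\id}}=0$), split the Floer complex into a Morse piece generated by $\crit(H)\cap M_{\id}$ and an isolated-fixed-point piece with zero differential, and identify the former with relative Morse homology while counting the latter by the Lefschetz formula.

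The one substantive difference in emphasis concerns the Floer--Morse identification on a component $C$ of $M_{\id}$. Where you invoke ``standard Floer--Morse adiabatic arguments'', the paper instead states an explicit confinement result --- its \emph{separation mechanism}: every Floer cylinder between two critical points lying in the same component of $M_{\id}$ has image contained in a small neighborhood $\Sigma_\delta$ of $M_{\id}\setminus N_0$ (for a suitable collar $N_0$ of $\partial M_{\id}$). This is precisely what rules out holomorphic cylinders that wander out of $C$ and return, a possibility that Nielsen-class splitting alone does not exclude; it is exactly the step you correctly flag as the hard part, but the paper's mechanism is a $C^0$-confinement argument rather than an adiabatic limit. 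One minor inaccuracy: the isolated fixed points in $M\setminus M_{\id}$ arise primarily from the periodic components of $M\setminus N$ with period greater than one (cf.\ Lemma \ref{lemma:jiang}), not only from the interiors of flip-twist annuli.
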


\begin{proof}

The main idea of the proof  is a separation mechanism for Floer connecting
orbits.
Together with the topological separation of fixed points discussed in
theorem \ref{thm:main0} , it allows us to compute the Floer homology of
diffeomorphisms of finite type.

There exists a function $H: M\to\R$ 
such that $H|\text{int}(M_{id})$ is a Morse function,
meaning that all the critical points are non-degenerate and $H|M\setminus M_{id}=0$. 
Let $(\psi_t)_{t\in\R}$ denote the Hamiltonian flow generated by $H$ with
respect to the fixed area form $\omega$ and set
$
\Phi:=\phi\circ\psi_1.$ Then $
\fix\Phi = \big(\crit(H)\cap M_{id}\big)\cup\big(\fix\phi\setminus M_{id}\big).
$
In particular, $\Phi$ only has non-degenerate fixed points.
Let $N_0\subset M_{id}$ be a collar neighborhood of
$\p M_{id}$.
Let $x^-,x^+\in\fix\Phi\cap M_{id}$ be in the same connected component of
$M_{id}$.
If $u\in\M(x^-,x^+;J,\Phi)$, then $\im u \subset \Sigma_\delta$,
where $\Sigma_\delta$ denotes the $\delta$-neighborhood of
$M_{id}\setminus N_0$  with respect to any of the metrics $\omega(.,J_t.)$ 
\cite{S,G}

 Moreover, lemma \ref{lemma:fclass}  implies that every
$y\in\fix\phi\setminus M_{id}$ forms a different fixed point class of
$\Phi$.
This has an immediate consequence for the Floer complex $(\CF(\Phi),\p_{J})$
with respect to a generic $J=(J_t)_{t\in\R}$. Namely,
$(\CF(\Phi),\p_{J})$ splits into the subcomplexes $(\Cc_1,\p_1)$ and
$(\Cc_2,\p_2)$, where $\Cc_1$ is generated by $\crit(H)\cap M_{id}$
and $\Cc_2$ by $\fix\phi\setminus M_{id}$. Moreover, $\Cc_2$ is graded by~$0$
and $\p_2=0$ \cite{G}.The homology of $(\Cc_1,\p_1)$ is isomorphic to
$H_*(M_{id},\p_+M_{id};\Z_2)$ \cite{S,G}. So
$
\HF(\phi) \cong
H_*(M_{id},\p_+\Sigma_0;\Z_2)\oplus
\Z_2^{\#\fix\phi|M\setminus M_{id}}.
$
Since every fixed point of $\phi|M\setminus M_{id}$ has fixed point
index~1, the Lefschetz fixed point formula implies that
$
\#(\fix\phi\setminus M_{id}) = \Lambda(\phi|M\setminus M_{id}).
$

\end{proof}

\begin{rk}
 In  the  theorem \ref{thm:main0} the  set $ M_{\id}$ is empty and  every fixed point of $\phi$ has fixed point index 1 \cite{JG}.
The Lefschetz fixed point formula implies that
$\#\fix \phi=N(\phi)=L(\phi)$ .
So, theorem \ref{thm:main0}  follows also  from theorem  \ref{th:ga}.

\end{rk}

\subsection{Symplectic zeta functions and asymptotic invariant\label{sec:zeta}}

\subsubsection{Symplectic zeta functions}
Let $\Gamma = \pi_0(Diff^+(M))$ be the mapping class group of a closed connected oriented surface $M$ of genus $\geq 2$. Pick an everywhere positive two-form $\omega$ on $M$. A isotopy  theorem of Moser \cite{Mo} says that each  mapping class of  $g \in \Gamma$,  i.e. an isotopy class of $Diff^+(M)$,  admits representatives which preserve $\omega$. Due  to Seidel\cite{S} we can
pick  a monotone  representative $\phi\in\symp^m(M,\omega)$ of $g$.
Then $\HF(\phi)$ is an invariant of $g$, which is
denoted by $\HF(g)$. Note that $\HF(g)$ is independent of the choice of an area
form $\omega$ by Moser's  theorem  and naturality of Floer homology.
By  lemma  \ref{lemma:monotone2}  symplectomorphisms  $\phi^n$
are  also monotone for all $n>0$.
Taking a dynamical point of view,
 we consider the iterates of monotone symplectomorphism  $\phi$
and  define the first symplectic  zeta function of $\phi$ \cite{ff}
 as the following power series:
$
 \chi_\phi(z) =
 \exp\left(\sum_{n=1}^\infty \frac{\chi(\HF(\phi^n))}{n} z^n \right),
$
where $\chi(\HF(\phi^n))$ is the Euler characteristic of Floer homology group
of $\phi^n$.
Then $ \chi_\phi(z)$ is an invariant of $g$, which we
denote by $ \chi_g(z)$.
Let us consider  the  Lefschetz  zeta function
 $
  L_\phi(z) := \exp\left(\sum_{n=1}^\infty \frac{L(\phi^n)}{n} z^n \right),
$
   where
 $
   L(\phi^n) := \sum_{k=0}^{\dim X} (-1)^k \tr\Big[\phi_{*k}^n:H_k(M;\Q)\to H_k(M;\Q)\Big]
 $
 is the Lefschetz number of $\phi^n$. 
\begin{theorem}\label{thm:lef}\cite{ff}
Symplectic zeta function $ \chi_\phi(z)$ is a rational function of $z$ and
$$
 \chi_\phi(z)= L_\phi(z)=\prod_{k=0}^{\dim X}
          \det\big(I-\phi_{*k}.z\big)^{(-1)^{k+1}}.
$$
\end{theorem}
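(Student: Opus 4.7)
The plan is to reduce everything to the identity $\chi(\HF(\phi^n))=L(\phi^n)$ for each $n\geq1$, since once this is established, the claim becomes a purely algebraic statement about the exponential generating function associated to a sequence of traces.

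First I would deal with the monotonicity issue. The definition of $\chi_\phi(z)$ implicitly uses $\HF(\phi^n)$ for every $n$, so each iterate $\phi^n$ must be monotone; this is exactly what Lemma \ref{lemma:monotone2} provides. Having fixed a monotone representative $\phi$ of the mapping class $g$, all the $\HF(\phi^n)$ are well-defined $\Z_2$-graded vector spaces, and their Euler characteristics are meaningful.

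Next I would compute $\chi(\HF(\phi^n))$. After a small Hamiltonian perturbation (as used in Seidel's construction), we may assume $\phi^n$ has only nondegenerate fixed points without changing $\HF(\phi^n)$, by the invariance property in Section~\ref{sec:floer}. In this nondegenerate situation $\CF(\phi^n)$ has one generator for each $y\in\Fix(\phi^n)$, with grading $(-1)^{\deg y}=\sign\det(\id-d\phi^n_y)$. Since the boundary operator $\partial_J$ preserves the $\Z_2$-grading up to a sign shift, one has
$$
\chi(\HF(\phi^n))=\chi(\CF(\phi^n))=\sum_{y\in\Fix(\phi^n)}\sign\det(\id-d\phi^n_y).
$$
The right-hand side is precisely the classical Lefschetz fixed point index sum, and by the Lefschetz fixed point theorem this equals $L(\phi^n)$. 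Thus $\chi(\HF(\phi^n))=L(\phi^n)$ for every $n$.

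Finally I would derive the product formula by the standard trace-exponential identity. Substituting $\chi(\HF(\phi^n))=L(\phi^n)=\sum_{k}(-1)^k\tr(\phi_{*k}^n)$ into the definition gives
$$
\chi_\phi(z)=\exp\!\left(\sum_{n=1}^{\infty}\frac{L(\phi^n)}{n}z^n\right)=\prod_{k=0}^{\dim X}\exp\!\left((-1)^k\sum_{n=1}^{\infty}\frac{\tr(\phi_{*k}^n)}{n}z^n\right),
$$
and the classical identity $\exp\bigl(\sum_{n\geq 1}\tr(A^n)z^n/n\bigr)=\det(I-Az)^{-1}$ applied to $A=\phi_{*k}$ yields the product $\prod_k\det(I-\phi_{*k}z)^{(-1)^{k+1}}$. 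This is a finite product of rational functions, hence rational, proving the claim.

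I expect the main obstacle to be the first computational step: justifying $\chi(\HF(\phi^n))=L(\phi^n)$ cleanly in the possibly degenerate case. The issue is that the Hamiltonian perturbation used to achieve nondegeneracy must remain inside $\symp^m(M,\omega)$ (so that Floer homology is defined) and must preserve the invariance of $\HF$; this relies on Seidel's enlarged class of perturbations, which keep the perturbed action form in the same cohomology class. Once this technical point is settled, the rest of the argument is essentially formal.
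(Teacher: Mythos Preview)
Your proposal is correct and follows essentially the same route as the paper: both reduce to the identity $\chi(\HF(\phi^n))=L(\phi^n)$ by computing the Euler characteristic of the chain complex in the nondegenerate case as $\sum_{y}\sign\det(\id-d\phi^n_y)$, and both invoke Seidel's perturbation scheme (with reference to Dostoglou--Salamon) to handle the degenerate case. The paper's proof is terser---it omits the monotonicity-of-iterates remark (having noted it just before the statement) and takes the product formula for $L_\phi(z)$ as standard---while you spell out the trace-exponential identity, but the substance is identical.
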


\begin{proof} 
If for every $n$ all the fixed points of $\phi^n$ are non-degenerate, i.e. for all $x\in\fix(\phi^n)$, $\det(\id-\de\phi^n(x))\ne0$, then we have( see section
\ref{sec:floer}): 
$\chi(\HF(\phi^n))=\sum_{x=\phi^n(x)} \sign(\det(\id-\de\phi^n(x)))=L(\phi^n).
$
If we have degenerate fixed points one needs   to perturb equations
\eqref{eq:corbit} in order to define the Floer homology.
 The necessary analysis is given in \cite{S}  is essentially  the same as in the slightly different situations considered in \cite{ds}, where the  above connection between the Euler characteristic and the Lefschetz number
 was firstly  established.
\end{proof}

In \cite{ff} we have  defined  the second symplectic zeta function for 
 monotone symplectomorphism  $\phi$  as the following power series:
$
 F_\phi(z) = 
 \exp\left(\sum_{n=1}^\infty \frac{\dim\HF(\phi^n)}{n} z^n \right).
$
Then $ F_\phi(z)$ is an invariant of mapping class  $g$, which we
denote by $ F_g(z)$.

Motivation for this definition was  the  theorem \ref{thm:main} and nice analytical
properties of the Nielsen zeta function $N_\phi(z) =
 \exp\left(\sum_{n=1}^\infty \frac{N(\phi^n)}{n} z^n \right)$, see \cite{FelshB}. 
We denote the  numbers  $\dim\HF(\phi^n) $ by $N_n$. Let $ \mu(d), d \in N$,
be the M\"obius function.

 \begin{theorem}\cite{ff}
Let $\phi$ be  a non-trivial orientation
preserving periodic diffeomorphism  of least period $m$ of a compact connected surface $M$
of Euler characteristic $\chi(M) < 0$  . Then the
  zeta function $ F_\phi(z)$ is  a radical of a rational function and 
$
 F_\phi(z) =\prod_{d\mid m}\sqrt[d]{(1-z^d)^{-P(d)}},
$
 where the product is taken over all divisors $d$ of the period $m$, and $P(d)$ is the integer
$  P(d) = \sum_{d_1\mid d} \mu(d_1)N_{d\mid d_1} .  $
\end{theorem}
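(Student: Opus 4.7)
The plan is to observe that the dimensions $N_n=\dim HF(\phi^n)$ depend on $n$ only through the greatest common divisor $\gcd(n,m)$, and then to extract the closed-form product by Möbius inversion on the divisor poset of $m$.

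First I would establish the key periodicity: $N_n = N_{\gcd(n,m)}$ for every $n\ge 1$. The pointwise identity $\Fix(\phi^n)=\Fix(\phi^{\gcd(n,m)})$ is purely group-theoretic, since a point $x$ is fixed by $\phi^k$ iff the order of its $\phi$-orbit divides $k$, and any such order automatically divides $m$; hence divisibility by $k$ is equivalent to divisibility by $\gcd(k,m)$. When $m\nmid n$ (so that $\gcd(n,m)<m$), both $\phi^n$ and $\phi^{\gcd(n,m)}$ are non-trivial orientation-preserving periodic diffeomorphisms of $M$, and Theorem \ref{thm:main} together with Lemma \ref{lemma:jiang} gives $N_n=N(\phi^n)=\#\Fix(\phi^n)$ and likewise for $\phi^{\gcd(n,m)}$; these cardinalities agree by the previous sentence. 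When $m\mid n$ we have $\phi^n=\phi^m=\id$, so $N_n=N_m$ trivially.

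Given this reduction, the rest is standard. Define $P(d):=\sum_{d_1\mid d}\mu(d_1)N_{d/d_1}$ for each divisor $d\mid m$. Möbius inversion on the divisor lattice of $m$ yields
\[
N_g \;=\; \sum_{d\mid g} P(d) \qquad \text{for every } g\mid m.
\]
Substituting $g=\gcd(n,m)$ and using $N_n=N_{\gcd(n,m)}$, I compute
\[
\log F_\phi(z)=\sum_{n=1}^{\infty}\frac{N_n}{n}z^n
=\sum_{n=1}^{\infty}\frac{z^n}{n}\sum_{d\mid \gcd(n,m)}P(d)
=\sum_{d\mid m}P(d)\sum_{k=1}^{\infty}\frac{z^{dk}}{dk}
=-\sum_{d\mid m}\frac{P(d)}{d}\log(1-z^d),
\]
where I have interchanged summation (grouping terms by the divisor $d\mid m$ that also divides $n$, writing $n=dk$). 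Exponentiating gives $F_\phi(z)=\prod_{d\mid m}(1-z^d)^{-P(d)/d}=\prod_{d\mid m}\sqrt[d]{(1-z^d)^{-P(d)}}$, which exhibits $F_\phi(z)$ as a radical of a rational function.

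The only non-routine step is the periodicity claim $N_n=N_{\gcd(n,m)}$, and its subtlety lies entirely in handling the exceptional iterates $\phi^n=\id$ (where $N(\phi^n)$ is not the cardinality of the fixed set and Theorem \ref{thm:main} does not directly apply). This is circumvented cleanly, however, because $m\mid n$ forces $\gcd(n,m)=m$, so the identity $N_n=N_{\gcd(n,m)}$ holds tautologically in that case; the non-trivial work is confined to iterates $\phi^n$ with $m\nmid n$, for which Theorem \ref{thm:main} applies without modification.
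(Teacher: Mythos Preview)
The paper does not include a proof of this theorem; it is stated with a citation to \cite{ff} and followed immediately by a remark on a different topic. Your argument is correct and is precisely the standard one for results of this type (the identical computation is carried out for the Nielsen zeta function of a periodic map in \cite{FelshB}, from which the present statement follows at once via Theorem~\ref{thm:main}). The only substantive input beyond formal manipulation is the identity $N_n=N_{\gcd(n,m)}$, which you justify cleanly by splitting into the cases $m\mid n$ and $m\nmid n$ and invoking Theorem~\ref{thm:main} in the latter; this is exactly right, and the M\"obius-inversion/logarithmic-series calculation that follows is routine and correctly executed.
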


\begin{rk}
Given a symplectomorphism $\phi$ of surface $M$, one can form
the symplectic mapping torus
$M^4_{\phi}=T^3_{\phi}\rtimes S^1$, where $T^3_{\phi}$ is  usual mapping torus
(see section \ref{asympt}).
Ionel and Parker \cite{IP} have computed the degree zero Gromov invariants(these are built from the invariants of Ruan and Tian)  of
$M^4_{\phi}$ and of fiber sums of the $M^4_{\phi}$ with other symplectic manifolds. This is done by expressing the Gromov invariants in terms of the
Lefschetz zeta function $ L_\phi(z)$ . The result is a large set of interesting non-Kahler
symplectic manifolds with computational ways of distinguishing them. In dimension four this gives a symplectic construction of the exotic elliptic surfaces of Fintushel and Stern.
In higher dimensions it gives many examples of manifolds which are diffeomorphic but not equivalent as symplectic manifolds.
  Theorem \ref{thm:lef} implies that the Gromov invariants of $M^4_{\phi}$ are related to symplectic Floer homology  of $\phi $ via zeta
function
 $\chi_\phi(z)= L_\phi(z)$. We hope that  the second symplectic zeta function $
 F_\phi(z)$ give rise to a new invariant of symplectic 4-manifolds.

\end{rk}

\subsubsection{Topological entropy and the Nielsen numbers}

   A basic relation between Nielsen numbers and topological entropy $h(f)$ was found by N. Ivanov
   \cite{i1}. We present here a very short proof of Jiang  of the Ivanov's  inequality.
\begin{lemma}\label{lemma:ent}\cite{i1}
$$
h(f) \geq \limsup_{n} \frac{1}{n}\cdot\log N(f^n)
 $$
\end{lemma}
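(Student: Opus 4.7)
The plan is to compare the Nielsen number $N(f^n)$ with the cardinality of an $(n,\epsilon)$-separated set, and then invoke the Bowen definition of topological entropy
$$
h(f)=\lim_{\epsilon\to 0}\limsup_{n}\frac{1}{n}\log s(n,\epsilon),
$$
where $s(n,\epsilon)$ is the maximal cardinality of an $(n,\epsilon)$-separated subset of $X$ with respect to a fixed metric $d$ compatible with the topology. The goal is therefore to produce, for a single $\epsilon_0>0$ independent of $n$, an $(n,\epsilon_0)$-separated set of size at least $N(f^n)$.

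The first step is to pick one representative fixed point $x_i$ of $f^n$ in each essential fixed point class of $f^n$; this gives a set $\{x_1,\dots,x_{N(f^n)}\}\subset\fix(f^n)$. The heart of the argument is the following uniform separation lemma: there exists $\epsilon_0>0$, depending only on $X$ and $f$, such that if $x,y\in\fix(f^n)$ satisfy $d(f^i(x),f^i(y))<\epsilon_0$ for all $0\le i<n$, then $x$ and $y$ lie in the same fixed point class of $f^n$. Granting this, the chosen representatives $x_1,\dots,x_{N(f^n)}$ form an $(n,\epsilon_0)$-separated set, so $N(f^n)\le s(n,\epsilon_0)$. Dividing by $n$, taking $\limsup$, and then $\epsilon_0\to 0$ (the bound is monotone in $\epsilon_0$), yields the claimed inequality.

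To prove the separation lemma, I would fix a finite open cover $\mathcal U$ of $X$ by path-connected sets each of which is contractible inside a slightly larger open set, and choose $\epsilon_0$ less than the Lebesgue number of $\mathcal U$ and small enough that $f$ maps any $\epsilon_0$-ball into some member of $\mathcal U$ (uniform continuity of $f$). If $d(f^i(x),f^i(y))<\epsilon_0$ for $i=0,\dots,n-1$ (and hence for $i=n$ since $x,y$ are fixed by $f^n$), choose short paths $c_i$ from $f^i(x)$ to $f^i(y)$ each lying inside some $U_i\in\mathcal U$. Then $f\circ c_i$ and $c_{i+1}$ both connect $f^{i+1}(x)$ to $f^{i+1}(y)$ and lie in a common contractible open set, hence are homotopic rel endpoints. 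Concatenating these homotopies along the orbit from $i=0$ to $i=n$ gives a homotopy $c_0\simeq f^n\circ c_0$ rel endpoints, which is exactly the equivalent definition of the Nielsen relation recalled in the introduction. Thus $x$ and $y$ determine the same fixed point class of $f^n$.

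The main obstacle is the uniformity of $\epsilon_0$ in $n$: it must be chosen before $n$ is specified, because the Bowen definition requires fixing $\epsilon$ and letting $n\to\infty$. This is handled by basing $\epsilon_0$ purely on the Lebesgue number of the cover $\mathcal U$ and on the modulus of uniform continuity of the single map $f$, quantities intrinsic to $(X,f)$ and independent of $n$. The rest is bookkeeping: essential classes are nonempty by definition, so representatives exist, and the inequality $N(f^n)\le s(n,\epsilon_0)$ immediately gives $\limsup_n\frac{1}{n}\log N(f^n)\le\limsup_n\frac{1}{n}\log s(n,\epsilon_0)\le h(f)$.
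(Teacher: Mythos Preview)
Your proposal is correct and follows essentially the same argument as the paper's proof (attributed to Jiang): choose a single scale $\epsilon$ using uniform continuity of $f$ together with a ``small loops are contractible'' condition, pick one point from each essential fixed point class of $f^n$, and show these representatives are $(n,\epsilon)$-separated because otherwise short paths $c_i$ between the orbit points would give $f^n\circ c_0\simeq c_0$. The only cosmetic difference is that the paper phrases the contractibility condition directly as ``every loop of diameter $<2\delta$ is contractible'' rather than via a Lebesgue-number argument for a good cover.
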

{ Proof}
 Let $\delta$ be such that every loop in $X$ of diameter $ < 2\delta $ is contractible.
 Let $\epsilon >0$ be a smaller number such that $d(f(x),f(y)) < \delta $ whenever $ d(x,y)<2\epsilon $. Let $E_n \subset X $ be a set consisting of one point from each essential fixed point class of $f^n$. Thus $ \mid E_n \mid =N(f^n) $. By the definition of $h(f)$, it suffices
 to show that $E_n$ is $(n,\epsilon)$-separated.
 Suppose it is not so. Then there would be two points $x\not=y \in E_n$ such that $ d(f^i(x), f^i(y)) \leq \epsilon$ for $o\leq i< n$ hence for all $i\geq 0$. Pick a path $c_i$ from $f^i(x)$ to
 $f^i(y)$ of diameter $< 2\epsilon$ for $ o\leq i< n$ and let $c_n=c_0$. By the choice of $\delta$
 and $\epsilon$ ,  $f\circ c_i \simeq c_{i+1} $ for all $i$, so $f^n\circ c_0\simeq c_n=c_0$. such that
 This means $x,y$ in the same fixed point class of $f^n$, contradicting the construction of $E_n$.

 This inequality is remarkable in that it does not require smoothness of the map and provides a common lower bound for the topological entropy of all maps in a homotopy class.

We recall Thurston classification theorem for homeomorphisms of surfase $M$
of genus $\geq 2$.

\begin{theorem}\label{thm:thur}\cite{Th}
Every homeomorphism $\phi: M\rightarrow M $ is isotopic to a homeomorphism $f$
such that either\\
(1) $f$ is a periodic map; or\\
(2) $f$ is a pseudo-Anosov map, i.e. there is a number $\lambda >1$(stretching factor)
 and a pair of transverse measured foliations $(F^s,\mu^s)$ and $(F^u,\mu^u)$ such that $f(F^s,\mu^s)=(F^s,\frac{1}{\lambda}\mu^s)$ and $f(F^u,\mu^u)=(F^u,\lambda\mu^u)$; or\\
(3)$f$ is reducible map, i.e. there is a system of disjoint simple closed curves $\gamma=\{\gamma_1,......,\gamma_k\}$ in $int M$ such that $\gamma$ is invariant by $f$(but $\gamma_i$ may be permuted) and $ \gamma$ has a $f$-invariant
tubular neighborhood $U$  such that each component of $M\setminus U$  has negative
Euler characteristic and on each(not necessarily connected) $f$-component of
$M\setminus U$, $f$ satisfies (1) or (2).

\end{theorem}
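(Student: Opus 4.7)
The plan is to follow Thurston's original strategy: study the action of the mapping class $[\phi]$ on the compactified Teichm\"uller space and apply the Brouwer fixed point theorem. First I would recall that the Teichm\"uller space $\mathcal{T}(M)$ is homeomorphic to an open ball of real dimension $6g-6$, on which the mapping class group acts properly discontinuously by pullback of marked hyperbolic structures. Thurston's compactification adjoins the space $\mathcal{PMF}(M)$ of projective measured foliations as a boundary sphere, so that $\overline{\mathcal{T}}(M)\cong D^{6g-6}$, and the action of any mapping class extends continuously. Applying Brouwer, the extension of $[\phi]$ admits a fixed point $x\in\overline{\mathcal{T}}(M)$.

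Next I would split into cases according to where $x$ lies. If $x\in\mathcal{T}(M)$, then some hyperbolic structure on $M$ is preserved by a representative $f$ of $[\phi]$, and since the isometry group of a closed hyperbolic surface is finite, $f$ is periodic, yielding case (1). If $x\in\mathcal{PMF}(M)$, then $\phi$ preserves a projective class $[\mathcal{F},\mu]$, so there exists $\lambda>0$ with $\phi_*(\mathcal{F},\mu)=(\mathcal{F},\lambda\mu)$. I would further subdivide depending on whether $\mathcal{F}$ is arational (i.e.\ has no closed leaves and no saddle connections forming cycles). In the non-arational case, the closed leaves or saddle-connection cycles of $\mathcal{F}$ provide a canonical finite $\phi$-invariant system of disjoint essential simple closed curves, producing the reducing system of case (3). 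In the arational case one must produce a transverse invariant measured foliation $(\mathcal{F}^u,\mu^u)$ with $\phi_*\mu^u=\lambda^{-1}\mu^u$, yielding the pseudo-Anosov structure of case (2).

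The main obstacle is the arational case. The natural construction of the transverse foliation uses Teichm\"uller geodesics: pick a quadratic differential whose horizontal foliation is $(\mathcal{F},\mu)$, run the Teichm\"uller flow, and show that $\phi$ acts on the resulting geodesic by translation by $\log\lambda$; the vertical foliation of the differential then supplies $(\mathcal{F}^u,\mu^u)$. Verifying the required affine product structure on $M$ minus the singular set, together with $\lambda>1$ (ruling out $\lambda=1$, which would return one to the periodic or reducible cases), requires careful use of the Hubbard--Masur theorem and the hyperbolic dynamics of the Teichm\"uller flow on the moduli space of quadratic differentials. A secondary subtlety is to arrange in case (3) that the reducing curves are pairwise disjoint and pairwise non-isotopic, after which one may iterate the theorem on the components of the complement obtained by cutting along the reduction system to produce the standard form for $f$ asserted in the statement.
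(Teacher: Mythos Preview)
The paper does not supply its own proof of this theorem: it is stated with the citation \cite{Th} and used as a classical background result (Thurston's classification of surface homeomorphisms), so there is no proof in the paper to compare your proposal against. Your outline is the standard Thurston strategy---compactify Teichm\"uller space by $\mathcal{PMF}(M)$, apply Brouwer, and split cases according to whether the fixed point is interior (periodic), a non-arational boundary point (reducible), or an arational boundary point (pseudo-Anosov)---and as a high-level sketch it is correct and faithful to the original argument in \cite{Th} and its detailed exposition in \cite{flp}.
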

The map $f$ above is called the Thurston canonical form of $f$. In (3) it
can be chosen so that some iterate $f^m$ is a generalised Dehn twist on $U$.
Such a $f$ , as well as the $f$ in (1) or (2), will be called standard.
A key observation is that if $f$ is standard, so are all iterates of $f$.

\begin{lemma}\label{lem:flp}\cite{flp} Let $f$ be a pseudo-Anosov  homeomorphism with stretching factor $\lambda >1$ of surfase $M$
of genus $\geq 2$. Then
$$h(f)=log(\lambda)= \limsup_{n} \frac{1}{n}\cdot\log N(f^n)$$
\end{lemma}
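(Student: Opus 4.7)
The plan is to establish the two equalities separately and then glue them with Ivanov's inequality (Lemma~\ref{lemma:ent}).

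First, I would show $h(f)=\log\lambda$. The upper bound $h(f)\le\log\lambda$ comes from the singular Euclidean structure on $M$ associated with the transverse measured foliations $(F^s,\mu^s)$ and $(F^u,\mu^u)$: in the local coordinates given by $(\mu^u,\mu^s)$, the map $f$ acts as $(x,y)\mapsto(\lambda x,\lambda^{-1}y)$, so an $(n,\epsilon)$-spanning set can be built by covering $M$ with $O(\lambda^n\epsilon^{-2})$ rectangles whose $n$-th forward image has diameter at most $\epsilon$ in the unstable direction. For the lower bound $h(f)\ge\log\lambda$, one produces a Markov partition adapted to $(F^s,F^u)$ (the existence of which is one of the main constructions in FLP / Fathi--Shub), whose associated subshift of finite type has topological entropy $\log\lambda$; semiconjugacy then transfers this entropy to $f$.

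Next, I would show $\limsup_n\tfrac1n\log N(f^n)\ge\log\lambda$. Using the Markov partition, one counts that $\#\Fix(f^n)=\lambda^n+\lambda^{-n}+O(1)$ by a Lefschetz-type trace formula applied to the incidence matrix. The crucial step is that for a pseudo-Anosov $f$ \emph{each fixed point of $f^n$ sits in its own Nielsen class of $f^n$}: two points $x,y\in\Fix(f^n)$ that are Nielsen equivalent would be joined by a path $c$ with $c\simeq f^n\circ c$ rel endpoints, and lifting to the universal cover $\widetilde M=\mathbb H^2$ one would obtain two fixed points of the same lift of $f^n$; but a lift of a pseudo-Anosov map to $\mathbb H^2$ has at most one fixed point in the interior (its dynamics on $\partial\mathbb H^2$ being north--south with expansion factor $\lambda^n$), contradiction. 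Consequently $N(f^n)=\#\Fix(f^n)\sim\lambda^n$, giving the desired lower bound. Combined with Ivanov's inequality
$$h(f)\ge\limsup_n\tfrac1n\log N(f^n)\ge\log\lambda=h(f),$$
all three quantities coincide.

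The main obstacle is the Nielsen-class separation statement: one has to argue carefully that distinct geometric fixed points of a pseudo-Anosov iterate cannot be joined by an $f^n$-invariant homotopy class of paths. I would handle this via the action of $f$ on the Gromov boundary of $\pi_1(M)$ (equivalently $S^1_\infty$), where the induced homeomorphism has exactly $2$ fixed points per conjugacy class of lifts, one attracting and one repelling with multiplier $\lambda^n$, so that the interior fixed point of a given lift is unique. The remaining pieces (existence and combinatorics of a Markov partition, counting periodic orbits via the transition matrix, and the entropy bound from the singular flat structure) are standard and assembled directly from Thurston's theory as presented in FLP.
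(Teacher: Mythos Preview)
The paper does not give its own proof of this lemma; it is stated as a citation to FLP (Fathi--Laudenbach--Po\'enaru) and then used as input for the subsequent discussion. So there is nothing to compare against directly. Your sketch is a reasonable outline of how one actually proves this in the FLP framework: $h(f)=\log\lambda$ via the Markov partition (Fathi--Shub), and the Nielsen count via separation of fixed points into distinct classes.

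Two small points worth tightening. First, the formula $\#\Fix(f^n)=\lambda^n+\lambda^{-n}+O(1)$ is the toral-automorphism count, not the pseudo-Anosov one; in general you only get $\#\Fix(f^n)\sim C\lambda^n$ from the trace of the Markov transition matrix, which is all you need here. Second, and more importantly, separating fixed points into distinct Nielsen classes gives only $N(f^n)\le\#\Fix(f^n)$; to conclude $N(f^n)=\#\Fix(f^n)$ you must also check that every class is \emph{essential}, i.e.\ that each isolated fixed point of $f^n$ has nonzero index. This is true (regular fixed points have index $-1$, a $p$-pronged singularity whose separatrices are fixed has index $1-p<0$, and rotated prongs also contribute nonzero index), but you should say so explicitly rather than letting it pass in the word ``Consequently.'' With that addition your outline is complete.
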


\begin{lemma}\label{lem:j1}\cite{j1}
Suppose $f$ is a standard homeomorphism  of surfase $M$
of genus $\geq 2$ and  $\lambda$ is the largest stretching factor  of the  pseudo-Anosov pieces( $\lambda=1$ if there is no pseudo-Anosov piece).
Then
$$h(f)=log(\lambda)= \limsup_{n} \frac{1}{n}\cdot\log N(f^n)$$
\end{lemma}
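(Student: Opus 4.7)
The plan is to establish the chain of inequalities
\[
\log \lambda \;\le\; \limsup_{n}\tfrac{1}{n}\log N(f^{n}) \;\le\; h(f) \;\le\; \log \lambda,
\]
using the Thurston decomposition of $f$ (Theorem \ref{thm:thur}) to localize both the entropy and the Nielsen number on the pseudo-Anosov pieces. The middle inequality is free: since $f$ is standard, every iterate $f^{n}$ is again standard and, in particular, is a self-homeomorphism of the compact surface $M$, so Ivanov's inequality from Lemma \ref{lemma:ent} gives $h(f)\ge \limsup_{n}\tfrac{1}{n}\log N(f^{n})$ directly. The remaining work is therefore the two outer bounds.

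For the upper bound $h(f)\le \log\lambda$, I would decompose $M$ along the $f$-invariant system of curves $\gamma$ with tubular neighborhood $U$ as in Theorem \ref{thm:thur}(3). Topological entropy is subadditive under a partition by an invariant compact set and equals the maximum of the entropies of the restrictions to the $f$-invariant pieces. On each periodic piece the entropy is $0$; on $U$ the map $f^{m}$ is a generalized Dehn twist, whose entropy is $0$ (Dehn twists have linear growth of word length and quasi-isometrically trivial dynamics); and on each pseudo-Anosov piece with stretching factor $\lambda_{i}$, Lemma \ref{lem:flp} gives entropy $\log\lambda_{i}$. Taking the maximum over all pieces produces $h(f)=\log\lambda$.

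For the lower bound $\limsup_{n}\tfrac{1}{n}\log N(f^{n})\ge \log\lambda$, I would pick a pseudo-Anosov piece $M_{0}$ on which the largest stretching factor $\lambda$ is attained (if $\lambda=1$ the inequality is trivial), and restrict to an iterate $f^{k}$ so that $f^{k}$ maps $M_{0}$ to itself. On this single piece, Lemma \ref{lem:flp} (applied to $f^{k}|_{M_{0}}$) gives $\limsup_{n}\tfrac{1}{n}\log N((f^{k}|_{M_{0}})^{n})=\log(\lambda^{k})$. The key point is then to compare Nielsen numbers of $f^{kn}$ on $M$ with Nielsen numbers of $(f^{k}|_{M_{0}})^{n}$ on $M_{0}$: the invariant curves $\gamma$ separate $M$ into pieces whose interiors are $f$-invariant (after passing to $f^{k}$), so two fixed points of $f^{kn}$ lying in different pieces of $M\setminus U$ cannot be joined by a path $c$ with $c\simeq f^{kn}\circ c$ relative endpoints, and hence define distinct fixed point classes. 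This gives $N(f^{kn})\ge N((f^{k}|_{M_{0}})^{n})$, and the $\limsup$ inequality follows after dividing by $kn$ and letting $n\to\infty$.

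The main obstacle is the last step: carefully verifying that fixed point classes of $f^{kn}$ on the whole surface do not merge across the separating curves $\gamma$, and that restriction from $M$ to the pseudo-Anosov piece $M_{0}$ does not collapse essential Nielsen classes. This amounts to showing that the essential fixed point classes contributed by the pseudo-Anosov dynamics on $M_{0}$ remain essential in $M$ and pairwise distinct there; the standard form of $f$ ensures that the complementary pieces and the Dehn-twist annuli absorb no nontrivial homotopies between fixed points in $\operatorname{int}(M_{0})$, so the bookkeeping reduces to Thurston's combinatorics plus the index calculation on $M_{0}$ already encoded in Lemma \ref{lem:flp}.
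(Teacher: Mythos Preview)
The paper does not supply its own proof of this lemma: it is stated with the citation \cite{j1} and no argument follows. So there is nothing in the paper to compare your proposal against line by line.

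That said, your outline is the standard one and is essentially how Jiang's argument goes. Two points deserve more care than you give them. First, Lemma~\ref{lem:flp} as stated in the paper is for a closed surface of genus $\ge 2$, whereas the pseudo-Anosov piece $M_{0}$ in the Thurston decomposition is a compact surface with boundary; you need the boundary version of the pseudo-Anosov entropy/Nielsen formula (this is in \cite{flp} and \cite{j1}, but you should say so rather than invoke the closed-surface lemma directly). Second, and more substantively, the inequality $N(f^{kn})\ge N((f^{k}|_{M_{0}})^{n})$ is not just a matter of ``fixed points in different pieces lie in different classes'': you must show that distinct essential classes of $f^{k}|_{M_{0}}$ on $M_{0}$ remain distinct \emph{and essential} when regarded as classes of $f^{k}$ on $M$. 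Distinctness could fail if a Nielsen path joining two fixed points in $M_{0}$ leaves $M_{0}$ and runs through the annuli $U$; essentiality could fail if the index changes upon inclusion. The relevant facts (indices agree for classes contained in a piece, and the annuli can absorb only boundedly many classes) are in \cite{JG}, and the paper itself quotes exactly this mechanism in the example following Definition~4.16; you should invoke that rather than the informal ``no nontrivial homotopies'' claim.
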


\subsubsection{Asymptotic invariant}

The growth rate of a sequence $a_n$ of complex numbers is defined
by
 $$
 \grow ( a_n):= max \{1,  \limsup_{n \rightarrow \infty} |a_n|^{1/n}\}
$$

which could be infinity. Note that $\grow(a_n) \geq 1$ even if all $a_n =0$.
When  $\grow(a_n) > 1$, we say that the sequence $a_n$ grows exponentially.
\begin{dfn}
 We define the asymptotic invariant $ F^{\infty}(g)$ of  mapping class   $g \in \Gamma = \pi_0(Diff^+(M))$ to be the growth rate of the sequence
$\{a_n=\dim\HF(\phi^n)\}$ for  a monotone  representative $\phi\in\symp^m(M,\omega)$ of $g$:
$$ F^{\infty}(g):=\grow(\dim\HF(\phi^n)) $$
\end{dfn}
\begin{ex}
If $\phi$ is  a non-trivial orientation
preserving periodic diffeomorphism of a compact connected surface $M$
of Euler characteristic $\chi(M) <  0$ , then the  periodicity of the
sequence $\dim\HF(\phi^n)$ implies that  for the  corresponding  mapping class $g$ the  asymptotic invariant
$$ F^{\infty}(g):=\grow(\dim\HF(\phi^n))=1 $$

\end{ex}

\begin{ex} Let $\phi$ be  a monotone  diffeomorphism of finite type of a compact connected surface $M$
of Euler characteristic $\chi(M) <  0$ and $g$ a corresponding algebraically finite mapping class.
Let $U$ be the open regular neighborhood of the $k$ reducing
curves $\gamma_1,......,\gamma_k$ in the Thurston theorem, and $M_j$
be the component of $M\setminus U$.Let $F$ be a fixed point class of $\phi$.
Observe from \cite{JG} that if $F\subset M_j$, then $\ind(F, \phi)=\ind(F,\phi_j)$.So if $F$ is counted in $N(\phi)$ but not counted in $\sum_{j} N(\phi_j)$
, it must intersect $U$. But we see from \cite{JG} that a component
of $U$ can intersect at most 2 essential fixed point classes of $\phi$.
Hence we have $N(\phi)\leq \sum_{j} N(\phi_j)$.For the  monotone  diffeomorphism of finite type $\phi$ maps $\phi_j$ are  periodic.
Applying last  inequality to $\phi^n$ and using  remark   \ref{th:ga} we have

$$
0\leq\dim\HF(\phi^n)= \dim H_*(M^{(n)}_{\id},\p{M^{(n)}_{\id}};\Z_2)+N(\phi^n|M\setminus M^{(n)}_{\id})\leq
$$
$$
\leq \dim H_*(M^{(n)}_{\id},\p{M^{(n)}_{\id}};\Z_2)+N(\phi^n)
$$
$$
\leq \dim H_*(M^{(n)}_{\id},\p{M^{(n)}_{\id}};\Z_2)+ \sum_{j} N((\phi)_j^n) +2k\leq Const
$$
by periodicity of $\phi_j$.
Taking the growth rate in $n$, we get
that asymptotic invariant  $ F^{\infty}(g)=1$.
\end{ex}

\subsection{ Generalised Arnold conjecture. Concluding  remarks \label{sec:problem}}

Let $\phi: M\to M$ be a Hamiltonian symplectomorphism of a compact symplectic
manifold $(M,\omega)$. In the nondegenerate case the Arnold conjecture asserts
that $$\# Fix(\phi) \geq \dim H_*(M,\Q)=\sum_{k=0}^{2n} b_k(M),$$
where $2n=\dim M,  b_k(M)=\dim H_k(M,\Q)$.

The Arnold conjecture was first proved by Eliashberg \cite{eli}
for Riemann surfaces. For tori of arbitrary dimension the Arnold conjecture was proved in the celebrated paper by Conley and Zehnder\cite{conzeh}.
 The most important breakthrough was Floer's
proof of the Arnold conjecture for monotone symplectic manifolds \cite{Floer}.
His proof was based on Floer homology.
His method has been pushed through by Fukaya-Ono\cite{fukono}, Liu-Tian\cite{liutian} and Hofer-Salamon\cite{hofsal} to establish the nondegenerate case of the Arnold conjecture for all symplectic manifolds.

The Hamiltonian symplectomorphism $\phi$ is isotopic to identity map $id_M$.
In this case all fixed points  $\phi$ are in the same Nielsen fixed point class. The  Nielsen number of $\phi$ is 0 or 1 depending on  Lefschetz number is 0 or not. So, the Nielsen number is very weak  invariant   to estimate the number of fixed points of $\phi$ for Hamiltonian symplectomorphism.
From another side, as we saw  in theorem \ref{thm:main},   for the nontrivial periodic
symplectomorphism $\phi$ of a  surface, the Nilsen number of $\phi$ gives an exact estimation from below for the number of nondegenerate fixed points of $\phi$. These  considerations lead us to the following question
\begin{que}
How to estimate the number of nondegenerate fixed points  of general(not necessary Hamiltonian)   symplectomorphism?
\end{que}

\subsubsection{Algebraically finite  mapping class}

 If $\psi$ is a diffeomorphism of finite type of surface $M$ then  $\psi \in \symp^m(M,\omega)$ for some $\psi$-invariant form  $\omega $.
Suppose  that symplectomorphism $\phi$ has only non-degenerate fixed points
and $\phi$ is Hamiltonian isotopic to $\psi$.
 Then $\phi \in \symp^m(M,\omega)$
and $\HF(\phi)$ is isomorphic to $\HF(\psi)$.
So,  from theorem \ref{th:ga}  it follows that
$$
\# Fix(\phi) \geq \dim\HF(\phi)=\dim\HF(\psi)=
$$
$$
=\dim H_*(M_{\psi=\id},\p{M_{\psi=\id}};\Z_2)+N(\psi|M\setminus M_{\psi=\id})=
$$
$$
=\sum_{k=0}^{2} b_k(M_{\psi=\id},\p{M_{\psi=\id}};\Z_2)+N(\psi|M\setminus M_{\psi=\id})
$$
This estimation can be considered as a generalisation of Arnold conjecture
becouse it implies  Arnold conjecture  for   $\psi=id$.
If  $\psi$ is nontrivial orientation preserving  periodic diffeomorphism
then  theorem \ref{thm:main}  implies  an estimation 
$$
\# Fix(\phi) \geq \dim\HF(\phi)=\dim\HF(\psi)= N(\psi)
$$
This estimation can be considered as a  generalisation of  Arnold conjecture for
a nontrivial periodic mapping class.

\subsubsection{Pseudo-Anosov mapping class}

  For pseudo-Anosov  ``diffeomorphism''in given pseudo-Anosov mapping class
 we also have, as in theorems \ref{thm:main},\ref{thm:main0}
and
 \ref{th:ga}, a topological separation of fixed points
 \cite{Th, JG, I}, i.e the Nielsen number of pseudo-Anosov ``diffeomorphism''
 equals to the number of
fixed points and there are  no connecting orbits between them.
But we have the following difficulties.
Firstly, a  pseudo-Anosov ``diffeomorphism'' is a smooth and  a symplectic
automorphism only on the complement of his fixed points set.
Nevertheless, M. Gerber and A. Katok \cite{GK} have found a smooth model for  pseudo-Anosov ``diffeomorphism'' with the same dynamical properties. More precise they have constructed for every  pseudo-Anosov ``diffeomorphism'' $f$ a diffeomorphism $f'$
which is topologically conjugate to $f$ through a homeomorphism isotopic
to identity. Diffeomorphism $f'$ is a symplectomorphism,  it has the same fixed
points as $f$, which are also topologically separated,  and it has the same  Nielsen number as $f$.
Secondly, in the case of a  pseudo-Anosov ``diffeomorphism''  and  it  smooth model, we have  to deal with
fixed points of index $ -p$ where $p>1$. Such  fixed points
are  degenerate from symplectic point of view  and therefore need a local perturbation.

If $\phi$ is monotone  symplectomorphism with nondegenerate fixed points in
given pseudo-Anosov mapping class $ \{ \phi \}=g $  then
$$
\# Fix(\phi) \geq \dim\HF(\phi)=\dim\HF(g)
$$
   To formulate   the generalised Arnold conjecture  in this case  we need to know how
to calculate in classical terms the Floer homology for monotone  symplectomorphism with nondegenerate fixed points  which
represents given pseudo-Anosov mapping class $g$.
For this we need, for example,  to understand the contribution of degenerate  fixed points  of a smooth model of a  pseudo-Anosov ``diffeomorphism'' to the Floer
homology. 
 In \cite{eft} Floer homology were calculated for certain class of pseudo-Anosov
maps which are compositions of positive and negative Dehn twists along loops in $M$
forming a tree-pattern.

\subsubsection{Reducible mapping class.  Generalised Arnold conjecture.}

Suppose now that symplectomorphism $\phi$ has only non-degenerate fixed points
and that  $\phi$ is Hamiltonian isotopic to  a monotone  symplectomorphism $\psi$ in
a  reducible  mapping class $g$ which contains pseudo-Anosov components  $g^i_{pA}, i=1,....,s$( see theorem \ref{thm:thur}).

\begin{conjecture}
 The Floer complex $(\CF(\psi),\p_{J})$
with respect to a generic $J=(J_t)_{t\in\R}$
 splits into the subcomplexes $(\Cc_{fin},\p_{fin})$ and
$(\Cc^i_{pA},\p_{pA}),i=1,...s$, where $\Cc_{fin}$ corresponds to the finite type component $g_{fin}$ of $g$
and $\Cc^i_{pA}, i=1,...s$ correspond to pseudo-Anosov components  $g^i_{pA}, i=1,....s$ of $g$.
The homology of $(\Cc_{fin},\p_{fin})$ is isomorphic to
$\HF(g_{fin})=
H_*(M_{\psi=id},\p{M_{\psi=\id}};\Z_2)\oplus
\Z_2^{N(\psi|M\setminus M_{\psi=id})}
$
by theorem \ref{thm:main0} and 
$$
\# Fix(\phi) \geq \dim\HF(\phi)=\dim\HF(\psi)=\dim\HF(g)=
$$
$$
= \dim\HF(g_{fin})+ \sum_{i=1}^{s}\dim\HF(g^i_{pA})=
$$
$$
=\dim H_*(M_{\psi=\id},\p{M_{\psi=\id}};\Z_2)+N(\psi|M\setminus M_{\psi=\id}) + \sum_{i=1}^{s}\dim\HF(g^i_{pA}) =
$$
$$
=\sum_{k=0}^{2} b_k(M_{\psi=\id},\p{M_{\psi=\id}};\Z_2)+N(\psi|M\setminus M_{\psi=\id})+\sum_{i=1}^{s}\dim\HF(g^i_{pA})
$$
\end{conjecture}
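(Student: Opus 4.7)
The plan is to extend Gautschi's argument for Theorem \ref{th:ga} from the finite-type setting to the general reducible setting by working with Thurston's decomposition of $g$ as a combination of a finite-type mapping class $g_{fin}$ and pseudo-Anosov components $g^i_{pA}$. First, pick a standard representative $\psi$ of $g$ in Thurston canonical form (theorem \ref{thm:thur}); on the pseudo-Anosov pieces replace the non-smooth model by a Gerber--Katok smoothing $\psi'$ which is a genuine symplectomorphism topologically conjugate to the pseudo-Anosov representative, has the same fixed points (all topologically separated), and realises the Nielsen number. Using lemmas \ref{lemma:monotone1}--\ref{lemma:monotone4} on the finite-type part together with invariance of the area form under the pseudo-Anosov smoothing, build a $\psi$-invariant area form $\omega$ with respect to which $\psi\in\symp^m(M,\omega)$; by the isotopy-invariance of Floer homology, $\HF(\phi)\cong \HF(\psi)$.

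Next, I would construct a Hamiltonian perturbation $\Phi:=\psi\circ\psi_1^H$ where $H$ is Morse on $\text{int}(M_{\psi=\id})$, vanishes outside a small collar, and is supplemented by local perturbations supported in neighborhoods of the degenerate pseudo-Anosov fixed points (of index $-p$, $p\ge 1$) so that $\Fix\Phi$ is non-degenerate and is a disjoint union $\Fix\Phi = \big(\crit H\cap M_{\psi=\id}\big)\sqcup\bigsqcup_{j}\Fix(\psi|M_j^{fin})\sqcup \bigsqcup_i \Fix^{pert}(\psi'|M_i^{pA})$, where $M_j^{fin}$ are the non-identity finite-type components and $M_i^{pA}$ the pseudo-Anosov components. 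Combining Lemma \ref{lemma:fclass} with Jiang--Guo's result that each pseudo-Anosov piece already realises the Nielsen number \cite{JG} and with the topological separation of Nielsen classes across reducing curves, I conclude that fixed points lying in different Thurston pieces lie in distinct fixed point classes of $\Phi$.

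The central step is then the dynamical separation of Floer trajectories, i.e.\ showing that the Floer complex $(\CF(\Phi),\p_J)$ splits as a direct sum of subcomplexes $\Cc_{fin}\oplus \bigoplus_i \Cc^i_{pA}$ indexed by the Thurston pieces. For the finite-type piece one inherits Gautschi's confinement argument: if $x^\pm$ lie in $M_{\psi=\id}$ (respectively in a given twist/periodic component), then any $u\in\M(x^-,x^+;J,\Phi)$ has image in a $\delta$-neighbourhood of that component. Extending this to the pseudo-Anosov pieces is the main obstacle: one must choose $J$ adapted to a cylindrical neighbourhood of each reducing curve so that a maximum-principle or neck-stretching argument forbids Floer cylinders from crossing the reducing annuli. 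Concretely, along each reducing annulus one can arrange $\psi=\text{twist}$ and pick $J$ translation-invariant in the angular coordinate, so that the function $s\mapsto\int_0^1 q(u(s,t))\,\de t$ is subharmonic along any solution of the Floer equation; combined with the first line of \eqref{eq:corbit} this bars trajectories entering one Thurston piece and exiting another.

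Once the splitting is in place, the homology of each summand is already known: $H_*(\Cc_{fin},\p_{fin})\cong H_*(M_{\psi=\id},\p M_{\psi=\id};\Z_2)\oplus \Z_2^{N(\psi|M\setminus M_{\psi=\id})}$ by Theorem \ref{th:ga} applied to the finite-type sub-mapping-class, while $H_*(\Cc^i_{pA},\p^i_{pA})=\HF(g^i_{pA})$ by definition applied to the restricted (and appropriately capped) pseudo-Anosov piece, in the spirit of \cite{eft}. Summing dimensions and using that $\#\Fix\phi\ge\dim\HF(\phi)=\dim\HF(\psi)$ (because $\phi$ has only non-degenerate fixed points and the Floer complex has at least $\dim\HF$ generators) yields the displayed inequality. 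The principal difficulty, as noted above, is the rigorous justification of the splitting across pseudo-Anosov components: one needs a careful adaptation of Seidel's perturbation scheme \cite{S} and of the Gerber--Katok smoothing so that the local model near each $-p$-index singular fixed point contributes exactly $\dim\HF$ of the corresponding pseudo-Anosov block, without creating spurious connecting orbits to the finite-type part.
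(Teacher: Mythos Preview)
The statement you are attempting to prove is labeled in the paper as a \emph{Conjecture}, not a theorem; the paper offers no proof whatsoever, only the bare statement together with the surrounding discussion of the difficulties in Section~\ref{sec:problem}. There is therefore nothing in the paper to compare your argument against.

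That said, your outline is a reasonable heuristic sketch of how one would \emph{expect} such a result to be proved, and indeed it faithfully mirrors the difficulties the paper itself flags: the need to use the Gerber--Katok smoothing on the pseudo-Anosov pieces, the degeneracy of the index $-p$ fixed points, and above all the lack of a rigorous confinement (maximum-principle or neck-stretching) argument preventing Floer cylinders from crossing the reducing annuli into or out of a pseudo-Anosov block. You correctly identify this last point as the ``main obstacle'' and ``principal difficulty,'' and you do not actually resolve it---your subharmonicity claim for $s\mapsto\int_0^1 q(u(s,t))\,\de t$ along the reducing annuli is asserted rather than proved, and making it work would require controlling the interaction between the twist in the annulus, the perturbation $H$, and the chosen $J$ in a way that is not standard. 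So what you have written is not a proof but a program, which is exactly the status the paper assigns to the statement.
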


\subsubsection{Concluding remarks}

 Due to P. Seidel  \cite{S1}   $\dim\HF(\phi)$  is a new symplectic invariant
of a four-dimensional symplectic  manifold  with nonzero first Betti number.
This 4-manifold produced from symplectomorphism $ \phi$ by a surgery construction which is a variation of earlier constructions due to McMullen-Taubes,
Fintushel-Stern and J. Smith.We hope that our  asymptotic invariant  and symplectic zeta function $F_{\phi}(z)$  also
give rise to a new  invariants  of contact 3- manifolds
and symplectic 4-manifolds.

\end{document}